
\documentclass[10pt]{article}

\usepackage{etex}
\usepackage[margin={1.2in}]{geometry}
\usepackage[titletoc]{appendix}

\usepackage{amsmath,blkarray,booktabs,bigstrut}

\setlength{\parindent}{1.5em}
\linespread{1.2}



\usepackage{enumerate}

\usepackage{tikz,eso-pic}
\usepackage{tikz-3dplot}


\usepackage{tikz}
\usetikzlibrary{matrix}

\usepackage{eucal}
\usepackage{mathrsfs}
\usepackage{stmaryrd}

\usepackage{hyperref} 


\usepackage{rotating}

\usepackage{longtable}

\usepackage[sc]{mathpazo}
\usepackage{courier}

\usepackage[utf8]{inputenc}
\usepackage[T1]{fontenc}

\usepackage[english]{babel}
\usepackage{blindtext}

\usepackage{amsmath}

\usepackage{microtype}

\usepackage{amsmath}
\usepackage{amssymb}
\usepackage{amsthm}
\usepackage{color}
\usepackage{latexsym,extarrows}


\usepackage[all]{xy}
\usepackage{graphicx}
\usepackage{subcaption}

 \setlength{\unitlength}{1mm}

\usepackage[stable,multiple]{footmisc}

\RequirePackage[font=small,format=plain,labelfont=bf,textfont=it]{caption}
\addtolength{\abovecaptionskip}{1ex}
\addtolength{\belowcaptionskip}{1ex}

\makeatletter
\newsavebox{\@brx}
\newcommand{\llangle}[1][]{\savebox{\@brx}{\(\m@th{#1\langle}\)}%
  \mathopen{\copy\@brx\kern-0.5\wd\@brx\usebox{\@brx}}}
\newcommand{\rrangle}[1][]{\savebox{\@brx}{\(\m@th{#1\rangle}\)}%
  \mathclose{\copy\@brx\kern-0.5\wd\@brx\usebox{\@brx}}}
\makeatother

\newcommand\xqed[1]{%
	\leavevmode\unskip\penalty9999 \hbox{}\nobreak\hfill
	\quad\hbox{#1}}
\newcommand\xxqed{\xqed{$\triangle$}}


\def\e#1\e{\begin{equation}#1\end{equation}}
\def\ea#1\ea{\begin{align}#1\end{align}}

\theoremstyle{plain}
\newtheorem{thm}{Theorem}[section]
\newtheorem{lem}[thm]{Lemma}
\newtheorem{prop}[thm]{Proposition}
\newtheorem{cor}[thm]{Corollary}
\theoremstyle{definition}
\newtheorem{dfn}[thm]{Definition}
\newtheorem{ex}[thm]{Example}
\newtheorem{rem}[thm]{Remark}
\newtheorem{conv}[thm]{Convention}

\newcommand{\LL}{\llangle[\Big]}
\newcommand{\RR}{\rrangle[\Big]}





\newcommand{\E}{{\mathcal E}}

\newcommand{\pii}{2\pi \mathbf{i}}






\def\Xint#1{\mathchoice
{\XXint\displaystyle\textstyle{#1}}%
{\XXint\textstyle\scriptstyle{#1}}%
{\XXint\scriptstyle\scriptscriptstyle{#1}}%
{\XXint\scriptscriptstyle\scriptscriptstyle{#1}}%
\!\int}
\def\XXint#1#2#3{{\setbox0=\hbox{$#1{#2#3}{\int}$}
\vcenter{\hbox{$#2#3$}}\kern-.5\wd0}}

\def\dashint{\Xint-}


\numberwithin{equation}{section}

\makeatletter
\newcommand{\subjclass}[2][2010]{%
  \let\@oldtitle\@title%
  \gdef\@title{\@oldtitle\footnotetext{#1 \emph{Mathematics Subject Classification.} #2}}%
}
\newcommand{\keywords}[1]{%
  \let\@@oldtitle\@title%
  \gdef\@title{\@@oldtitle\footnotetext{\emph{Key words and phrases.} #1.}}%
}
\makeatother

\makeatletter
\let\orig@afterheading\@afterheading
\def\@afterheading{%
   \@afterindenttrue
  \orig@afterheading}
\makeatother

\begin{document}
\title{\bf 
Gromov-Witten Generating Series of Elliptic Curves 
as
 Configuration Space Integrals 
}
\author{Jie Zhou}
\date{}
\maketitle

\begin{abstract}

The generating series of Gromov-Witten invariants of elliptic curves
can be expressed in terms of multi-variable elliptic functions
 by works of Bloch-Okounkov and Okounkov-Pandharipande. In this work we give new sum-over-partitions formulas for these generating series
and show that
they are  configuration space integrals  of cohomology classes constructed from sections of  Poincar\'e bundles.
We also discuss their  quasi-elliptic and quasi-modular properties.
\end{abstract}
\setcounter{tocdepth}{2} \tableofcontents

\section{Introduction}

Elliptic curves play an important role in understanding  the modern aspects of geometry.
Due to their rich facets, they  lie at the intersection of various branches and 
provide the simplest examples for the development of many theories and tools.
In this work we focus on the Gromov-Witten (GW) theory of elliptic curves.

\subsection{Review on GW generating series of elliptic curves}

Let $\mathcal{E}$ be an elliptic curve equipped with a K\"ahler class $\omega$. Let
\[
\LL \prod_{i=1}^n \omega\psi_{i}^{\ell_i}\RR_{g,\,n,d}^{ \bullet \E}
\] be the disconnected, {\em stationary}, descendent GW invariant of genus $g$ and degree $d$ with $n$ marked points (satisfying the dimension axiom $2g-2+n=\sum_{i=1}^{n}( \ell_i+1)$),
where $\psi_{i}$ is the descendent cotangent line class attached to the $i$th marking, and the symbol $\bullet$ stands for disconnected counting.
The invariant is called  stationary as the insertions only involve the descendents of $\omega$.
Following \cite{Okounkov:2006}, we denote
\[
\LL \prod_{i=1}^n \omega\psi_{i}^{\ell_i}\RR_{g,n}^{ \bullet \E}
=\sum_{d\geq 0}\LL \prod_{i=1}^n \omega\psi_{i}^{\ell_i}\RR_{g,\,n,d}^{ \bullet \E}\,q^{d}\,,\quad
\LL\omega\psi^{-2}\RR_{0,1}^{\bullet\E}:=1
\]
and define
 the {\em $n$-point generating function}
\begin{equation}
\label{n-point-function}
F_n(z_{1},\cdots, z_{n}; q):=\sum_{\ell_1, \cdots, \ell_n \geq -2}\LL \prod_{i=1}^n \omega\psi_{i}^{\ell_i}\RR_{g,n}^{\bullet\E} \,\prod_{i=1}^{n} z_{i}^{\ell_i+1}\,.
\end{equation}

The celebrated  GW/Hurwitz correspondence \cite[Theorem 5]{Okounkov:2006} 
relates $F_{n}$ to a character as follows.
Following \cite{Okounkov:2001, Okounkov:2006, Okounkov:2006b, Eskin:2006},
consider the Clifford algebra generated by the operators  $\psi_{r},\psi^{*}_{s},r,s\in \mathbb{Z}+{1\over 2}$
that satisfy
\[
\psi_{r}\psi^{*}_{s}+\psi^{*}_{s}\psi_{r}=\delta_{r+s}\,.
\]
The semi-infinite wedge $\bigwedge^{\infty\over 2} V=\bigoplus_{m}\bigwedge^{\infty\over 2}_m V$, where $m$ stands for the charge,
is constructed as a representation of the above algebra of operators.
One organizes the above operators into generating series (called fields)
\[
\psi(u)=\sum_{r} \psi_{r}u^{-r}\,,\quad
\psi^{*}(v)=\sum_{s} \psi^{*}_{s}v^{-s}\,.
\]
Define 
\begin{eqnarray*}
\mathbold{T}(u)&=&\sum_{r} u^{r} \psi_{-r}\psi^{*}_{r}=[v^{0}] (\psi(uv)\psi^{*}(v))\,,\\
\mathbold{T}_{n}(u)&=& \mathbold{T}(u_1)\cdots \mathbold{T}(u_{n}) 
=[v_{1}^{0} v_{2}^0 \cdots v_{n}^0] \left(\psi(u_{1}v_{1})\psi^{*}(v_{1})\cdots \psi(u_{n}v_{n})\psi^{*}(v_{n}) \right)\,,
\end{eqnarray*}
where the operators are chronologically ordered:
\begin{equation}\label{eqnGWordering}
|u_{i}v_{i}|>|v_{i}|>|u_{j}v_{j}|>|v_{j}|\,~\mathrm{for}~ i<j\,.
\end{equation}
Hereafter the notation $[t^k]f$ represents the degree $t^{k}$ coefficient of the formal Laurent series $f$ in $t$.
Taking the corresponding
$q$-trace over $\bigwedge^{\infty\over 2} V$, denoted by $\langle -\rangle$ below, leads to
\begin{equation}\label{eqnFnaszeromode}
\langle \mathbold{T}_{n} \rangle
=[v_{1}^{0} v_{2}^0 \cdots v_{n}^0]\, \big\langle \psi(u_{1}v_{1})\psi^{*}(v_{1})\cdots \psi(u_{n}v_{n})\psi^{*}(v_{n}) \big\rangle
\,.
\end{equation}
Up to normalization which will be detailed below, this is 
the $n$-point function
$F_{n}$ 
\begin{equation}\label{eqndefinfinitewedge}
	F_n(z_{1},\cdots, z_{n};q)=\mathrm{Tr}_{\bigwedge^{\infty\over 2}_0 V}\left(q^{H} \prod_{i=1}^{n}\mathcal{E}_{0}(z_i)\right)\,,\quad
	\mathcal{E}_{0}(z_i)
	=\sum_{k=0}^{+\infty}e^{z_i (\lambda_k-k+{1\over 2})}\,,
\end{equation}
where $H$ is the Hamiltonian operator and the states in the semi-infinite wedge $\bigwedge^{\infty\over 2}_0 V$ are labeled by partitions $\lambda=\sum_{k}\lambda_{k}$. The operator
$\mathcal{E}_{0}(z)$ is nothing but the operator $\mathbold{T}(u)$ under $u=e^{z}$.

The correspondence \eqref{eqndefinfinitewedge}
allows one to rewrite the  $n$-point functions $F_n(z_{1},\cdots, z_{n}, q)$ 
in terms of character formulas from \cite{Bloch:2000}
\begin{equation}\label{eqngeneratingseriesformulaintro}
	F_{n}(z_1,z_2,\cdots ,z_n;q)
	=(q)_{\infty}^{-1}\cdot\sum_{\text{all permutations of} ~z_1,\cdots, z_{n}}  {\det M_{n}(z_{1},z_{2},\cdots, z_{n})\over   \theta(z_{1}+z_{2}+\cdots+ z_{n})}\,,
\end{equation}
where $M_{n}(z_{1},z_{2},\cdots, z_{n})$ is the matrix whose $(i,j), j\neq n$ entries are zero for $i>j+1$ and otherwise are given by
\begin{equation*}
	{\theta^{(j-i+1)}(z_{1}+\cdots+z_{n-j})  \over (j-i+1)! \theta(z_{1}+\cdots+z_{n-j}) }\,,\,
	j\neq n\,,
	\quad\quad
	{\theta^{(n-i+1)}(0)   \over (n-i+1)!}\,,\,
	j=n\,.
\end{equation*}
Here $\theta(z)$ is the unique odd Jacobi theta function satisfying $(\partial_z \theta)(0)=1$
and $(q)_{\infty}:=\prod_{k=1}^{+\infty}(1-q^{k})$ is the Euler function.
These formulas, albeit beautiful, are of considerable combinatorial complexity, 
as they are expressed 
in terms of  determinants of matrices
whose entries are derivatives of the Jacobi theta function with different sets of arguments.

\subsection{Configuration space integrals and sum over partitions}
\label{subsecGWconfigurationspaceintegrals}

The  present work attempts to provide a more conceptual 
understanding of $F_{n}$ in terms of configuration space integrals,
as well as deriving new formulas for them.
Instead of using \eqref{eqndefinfinitewedge}, our starting point is \eqref{eqnFnaszeromode}.

We shall focus on a scaled version of $F_{n}$ given by
\begin{equation}\label{eqndefTn}
	T_{n}(z_{1},\cdots, z_{n};q):=\theta(\sum_{i=1}^{n} z_{i})\cdot (q)_{\infty}F_{n}(z_{1},\cdots, z_{n};q)\,.
\end{equation}
Note that the factor $(q)_{\infty}^{-1}$ is nothing but the contribution of the 
unstable connected component with genus one domain and no marked points.
Thus by multiplying by $(q)_{\infty}$, one is
cancelling out  this contribution.
See \cite{Zagier:2016partitions} (also \cite{Bloch:2000, Okounkov:2001, Milas:2003}) 
for more about the enumerative content of the function $T_{n}$.\\

Let $E=\mathbb{C}/\Lambda_{\tau}, \Lambda_{\tau}=\mathbb{Z}\pii\oplus \mathbb{Z}\pii\tau$ 
be an elliptic curve.
We fix the additive coordinate $z$ on the universal cover $\widetilde{E}=\mathbb{C}\rightarrow E$ and
multiplicative coordinate $u=e^{z}$ on the cover $\mathbb{C}^{*}=\mathbb{C}/\pii \mathbb{Z}\rightarrow E$.
The origin on $E$ is taken to be the image of $0\in \mathbb{C}$ under the covering map $\mathbb{C}\rightarrow E$.
We also fix a weak Torelli marking $\{A,B\}$, namely a symplectic frame for $H_{1}(E,\mathbb{Z})$.
The class $A$ is the image of the segment connecting $\pii \tau$ and $\pii \tau+\pii$ on $\mathbb{C}$.

For any $n\geq 1$, let $\mathsf{Conf}_{n}(E)$ be the configuration space of $n$ ordered points on $E$.
We shall construct a relative cohomology class $\varphi_{n}^{\mathrm{GW}}$ 
and a relative homology class $\gamma_{n}^{\mathrm{GW}}$
on 
a relative
configuration space $\widetilde{E}^{n}\times \mathsf{Conf}_{n}(E)\rightarrow \widetilde{E}^{n}$, and then
 consider their pairing
 \begin{equation}\label{eqnparingofGWclasses}
\Big \langle ~\gamma_{n}^{\mathrm{GW}}\,, ~ \varphi_{n}^{\mathrm{GW}} ~\Big\rangle
 \end{equation}
 which  concretely  is the fiberwise integration.
 See Section \ref{secconstructionofGWclasses} for their definitions.

Our first main result is the following sum-over-partitions formula.

\begin{thm}\label{thmTngwintermsofBintro}(=Proposition \ref{propGWpairing=bindpairing}+Theorem \ref{thmTngwintermsofB})
	Let $\mathbold{B}_{n}(z)$ be the 
	complete Bell polynomial $\mathbold{B}_{n}(z)$ in the variables $\mathcal{E}_{m}^{*}(z),m\geq 1$ which are the Eisenstein-Kronecker series\footnote{A quick definition of the complete Bell polynomial in the variables $x_1,\cdots, x_{m},\cdots$ is given by $\mathbold{B}_{m}(x_1,\cdots,x_{m})=m!\cdot [t^{m}]\exp(\sum_{k=1}^{\infty}{t^{k}\over k!}x_{k} )$. The quantity $\mathcal{E}_{m}^{*}$ can be alternatively written as $(\ln\theta)^{(m)}+2\mathbb{G}_{m}$, where $2\mathbb{G}_{m}$ is the ordinary Eisenstein series defined as the summation of $(m-1)!\cdot\lambda^{-m}$ over $\lambda\in \Lambda_{\tau}-\{(0,0)\}$.}
	\[
	\mathcal{E}_{m}^{*}(z):=
	(-1)^{m-1}(m-1)!
	\left({1\over z^m}+\sum_{\lambda\in \Lambda_{\tau}-\{(0,0)\}}( {1\over (z+\lambda)^{m}}+{(-1)^{m-1}}{1\over \lambda^{m}})
	\right)
	\,,\quad m\geq 1\,.
	\]
	Let the linear coordinates on the $n$ components of $\widetilde{E}^{n}$ in $\widetilde{E}^{n}\times \mathsf{Conf}_{n}(E)$ be $z_1,\cdots, z_{n}$ respectively.
		Denote $\Pi_{[n]\setminus \{j\}}$ to be the set of partitions of $[n]\setminus \{j\}$ consisting of 
	elements of the form $\pi=\{\pi_1,\cdots, \pi_{\ell}\}$.
	 Then one has
	\begin{equation}\label{eqnTngwpureweightstructureintro}
\Big \langle ~\gamma_{n}^{\mathrm{GW}}\,, ~ \varphi_{n}^{\mathrm{GW}} ~\Big\rangle=
		\sum_{j\in [n]}\sum_{\pi\in \Pi_{[n]\setminus \{j\}}}
		{\mathbold{B}_{{\pi}}\over |\pi|}\,,
		\quad
		{\mathbold{B}_{{\pi}}\over |\pi|}:=\prod_{k=1}^{\ell} {\mathbold{B}_{|\pi_{k}|}(\sum_{i\in \pi_{k}}z_{i})\over |\pi_{k}|}\,.
	\end{equation}
\end{thm}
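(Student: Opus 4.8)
The plan is to prove the identity in the two stages indicated by the cited results. For the first stage (Proposition~\ref{propGWpairing=bindpairing}) I would unwind the definitions of $\gamma_n^{\mathrm{GW}}$ and $\varphi_n^{\mathrm{GW}}$ from Section~\ref{secconstructionofGWclasses} and check that the fiberwise integral \eqref{eqnparingofGWclasses} reproduces the fermionic trace \eqref{eqnFnaszeromode}: since $\varphi_n^{\mathrm{GW}}$ is assembled from sections of Poincar\'e bundles whose elementary building block is exactly the Szeg\H{o} propagator, the pairing is a residue computation that returns $T_n=\theta(\sum_i z_i)(q)_\infty F_n$ of \eqref{eqndefTn}. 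The substance is the second stage, the evaluation of this object. The key structural observation is that each factor $\mathbold{T}(u_i)=[v_i^0]\,\psi(u_iv_i)\psi^*(v_i)$ is a fermion bilinear, so the $q$-trace $\langle\mathbold{T}(u_1)\cdots\mathbold{T}(u_n)\rangle$ is Gaussian and is governed by Wick's theorem.

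First I would apply the finite-temperature Wick theorem to write $\langle\psi(u_1v_1)\psi^*(v_1)\cdots\psi(u_nv_n)\psi^*(v_n)\rangle$ as the determinant $\det\big[S(u_iv_i,v_j)\big]_{i,j}$ of the thermal two-point function $S(x,y)=\langle T\,\psi(x)\psi^*(y)\rangle$. Expanding $\det[\cdots]=\sum_\sigma\mathrm{sgn}(\sigma)\prod_i S(u_iv_i,v_{\sigma(i)})$ and decomposing each permutation $\sigma$ into cycles, I would extract $[v_1^0\cdots v_n^0]$, using the ordering prescription \eqref{eqnGWordering} to fix the annuli in which each $S$ is expanded. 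Around a $p$-cycle supported on an index set $B$ the auxiliary variables telescope, so the $v$-coefficient collapses to a single meromorphic function of the block-sum $z_B:=\sum_{i\in B}z_i$ alone; since the $(|B|-1)!$ distinct cyclic orders on $B$ all yield the same function of $z_B$, the passage from cycles to set partitions is what ultimately generates the weight $1/|B|$ (and the cycle signs $(-1)^{|B|-1}$ must be matched against the sign conventions in $\mathcal{E}^*_m$).

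Next I would identify this per-block function with a complete Bell polynomial. The propagator $S$ is the Szeg\H{o} kernel, whose logarithmic Taylor coefficients are precisely the Eisenstein--Kronecker series: setting $h(z)=\theta(z)\exp\!\big(\sum_{k\geq1}\tfrac{2\mathbb{G}_k}{k!}z^k\big)$ one has $(\log h)^{(m)}(z)=\mathcal{E}_m^*(z)$ by the characterization $\mathcal{E}_m^*=(\log\theta)^{(m)}+2\mathbb{G}_m$ recorded in the footnote. Fa\`a di Bruno's formula then gives $\mathbold{B}_m\big((\log h)',\dots,(\log h)^{(m)}\big)=h^{(m)}/h$, which is exactly the generating identity $\mathbold{B}_m(x_1,\dots,x_m)=m!\,[t^m]\exp(\sum_k\tfrac{t^k}{k!}x_k)$ specialized to $x_k=\mathcal{E}_k^*(z_B)$; hence the loop on a block of size $p$ evaluates to $\mathbold{B}_{|B|}(z_B)/|B|$, as required.

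Finally I would account for the distinguished index $j$ and the normalization $\theta(\sum_i z_i)$. The charge/zero-mode structure of the trace forces the global denominator $1/\theta(\sum_i z_i)$ already visible in \eqref{eqngeneratingseriesformulaintro}; multiplying by $\theta(\sum_i z_i)$ to form $T_n$ cancels it but singles out one marked point as the root of the loop configuration, and summing over which $j\in[n]$ plays this role while the remaining $[n]\setminus\{j\}$ is partitioned into loops produces exactly $\sum_{j\in[n]}\sum_{\pi\in\Pi_{[n]\setminus\{j\}}}\mathbold{B}_\pi/|\pi|$. I expect this last reorganization to be the main obstacle: one must show that the naive cycle (set-partition) sum coming from the determinant is precisely converted into the \emph{rooted} partition sum (in particular the root carries no $\mathbold{B}_1(z_j)$ factor), while simultaneously tracking fermionic signs against the $1/|\pi_k|$ weights and justifying both the Szeg\H{o}-kernel form of the thermal propagator and the regularization of $\mathcal{E}_1^*$ (the conditionally convergent $\mathbb{G}_1$ term). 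Throughout I would use the cases $n=1$, where $T_1=1$, and $n=2$, where $T_2=\mathcal{E}_1^*(z_1)+\mathcal{E}_1^*(z_2)$, as consistency checks.
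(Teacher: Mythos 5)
Your skeleton is the same as the paper's: Fay's multi-secant identity/Wick's theorem turns the $2n$-point function into a determinant of Szeg\"o kernels, the determinant is expanded into permutation cycles, the $[v^0]$-extraction telescopes along each cycle into a function of the block-sum $z_B$, and that function is matched with Bell polynomials via the Taylor expansion of the Szeg\"o kernel (this is Theorem \ref{thmSzegokernel} (iii) and \eqref{eqnwLaurentcoefficientofS2}; your $n=1,2$ checks are correct). But the step on which you base the Bell-polynomial identification rests on a false simplification. In the ordering \eqref{eqnGWordering} it is \emph{not} true that the $(|B|-1)!$ cyclic orders on a block $B$ yield the same function of $z_B$: when you extract $[v^0]$ along a cycle $\tau$, each factor ${\theta'\over\theta}(u_iv_i/v_j)$ must be expanded in the annulus dictated by the ordering, and for $\Omega_{\mathrm{GW}}$ the Fourier coefficients differ according to whether $i>j$ or $i<j$. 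The telescoped function is $g_{|B|,\chi_\tau}(u_B)$ of Definition \ref{dfngbafunction}, where $\chi_\tau$ is the cyclic descent number of $\tau$; only the Wick ordering is cyclic-order independent, and there the answer $g_{|B|,0}$ is not the Bell polynomial. Identifying the sum over cyclic orders with $(|B|-1)!\cdot[c^{|B|-1}]S(z_B,c)$ is precisely where the Eulerian-polynomial analysis enters (Lemma \ref{lemv0sumoverpartition}, Lemma \ref{lembuildingblock}, Proposition \ref{propAm}), and even then the block contributes ${\mathbold{B}_{|B|}/|B|}$ only up to Bernoulli constants $(2^{|B|}-2)B_{|B|}/|B|$, with further constants $c_J$ coming from the cycle through the bordering index in the Frobenius--Stickelberger determinant \eqref{eqnFrobeniussymmetricform}.

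The second gap is the one you yourself flag as ``the main obstacle'' but leave unaddressed: converting the cycle sum, together with all of the Bernoulli constants just described, into the \emph{rooted} partition sum $\sum_{j\in[n]}\sum_{\pi\in\Pi_{[n]\setminus\{j\}}}\mathbold{B}_\pi/|\pi|$. This is not a bookkeeping exercise about which index is the root; it is the heart of the paper's proof of Theorem \ref{thmTngwintermsofB}, where one shows that the coefficient $\kappa_{\pi'''}$ of each product of Bell polynomials equals $1$ when the blocks cover $n-1$ of the indices and $0$ otherwise, via the generating-function identity $e^{f_0}f_1=t^2/4$ with $f_0=\ln\bigl(t(e^t+1)/2(e^t-1)\bigr)$ and $f_1=t/2-t/(e^t+1)$. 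Since you provide no argument for either the per-block evaluation in the GW ordering or this final cancellation, the proposal identifies the correct strategy but does not constitute a proof.
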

The proof
is obtained by first realizing $\langle \gamma_{n}^{\mathrm{GW}}\,, ~ \varphi_{n}^{\mathrm{GW}} \rangle$ as concrete integrations (see \eqref{eqnpairingisintegration}).
Then we 
compute it based on the explicit  expression of $\varphi_{n}^{\mathrm{GW}}$
in terms of multi-variable elliptic functions.
In deriving Theorem \ref{thmTngwintermsofBintro}, we also
obtain some purely combinatorial
results such as  Theorem \ref{thmquasiellipticitygba} and Theorem \ref{thmsumoverpartitions} 
which seem to be of independent interest. These are in parallel with the sum-over-partial-flags formulas in
\cite{Bloch:2000}.

Based on Theorem \ref{thmTngwintermsofBintro}, we then relate the GW generating series $T_{n}$ 
in \eqref{eqndefTn}
with the paring \eqref{eqnparingofGWclasses} between algebro-geometric quantities.

\begin{thm}\label{thmTn=Acycleintegralintro}(=Theorem \ref{thmTn=Acycleintegral})
	Following the notation above,
	let the linear coordinates on the $n$ components of $\widetilde{E}^{n}$ in $\widetilde{E}^{n}\times \mathrm{Conf}_{n}(E)$ be $z_1,\cdots, z_{n}$ respectively, and $q=e^{\pii\tau}$.
	Then
	\begin{equation}\label{eqnTnintermsofintergralintro}
	T_{n}(z_1,\cdots, z_{n};q)
	=\Big \langle ~\gamma_{n}^{\mathrm{GW}}\,, ~ \varphi_{n}^{\mathrm{GW}} ~\Big\rangle \,.
	\end{equation}
\end{thm}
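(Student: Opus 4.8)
The plan is to prove \eqref{eqnTnintermsofintergralintro} by feeding Theorem~\ref{thmTngwintermsofBintro} into the Bloch--Okounkov formula for $T_n$. Since Theorem~\ref{thmTngwintermsofBintro} already identifies the pairing $\langle \gamma_n^{\mathrm{GW}},\varphi_n^{\mathrm{GW}}\rangle$ with the sum-over-partitions in \eqref{eqnTngwpureweightstructureintro}, it suffices to show that $T_n$ equals that same sum, i.e.
\begin{equation*}
T_n(z_1,\dots,z_n;q)=\sum_{j\in[n]}\sum_{\pi\in\Pi_{[n]\setminus\{j\}}}\frac{\mathbold{B}_{\pi}}{|\pi|}\,.
\end{equation*}
First I would unwind the left-hand side: substituting the character formula \eqref{eqngeneratingseriesformulaintro} into the definition \eqref{eqndefTn}, the factors $(q)_\infty$ and $(q)_\infty^{-1}$ cancel, and since $\theta(z_1+\cdots+z_n)$ is invariant under permuting the $z_i$ it cancels the common denominator of every summand. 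This collapses $T_n$ to the symmetrized determinant sum $\sum_{\text{all permutations of }z_1,\dots,z_n}\det M_n$, so the theorem becomes a purely theta-function identity between a determinant/permutation sum and a set-partition sum of complete Bell polynomials.

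The combinatorial heart is then to re-sum this determinant. Expanding $\det M_n$ by the Leibniz rule and using the prescribed vanishing of its entries (which are zero for $i>j+1$ when $j\neq n$), the surviving permutations organize into nested partial sums of the arguments, which is exactly the partial-flag data underlying the Bloch--Okounkov expansion. I would then invoke the combinatorial identity of Theorem~\ref{thmsumoverpartitions} to convert this partial-flag sum, after symmetrization over all orderings of $z_1,\dots,z_n$, into the unordered set-partition sum. The algebraic dictionary making this work is that $\mathbold{B}_m$ is, by the footnote's definition, $m!$ times the $t^m$-coefficient of $\exp(\sum_{k\geq 1}\tfrac{t^k}{k!}\mathcal{E}_k^*)$, whose logarithmic-derivative part reproduces the Taylor expansion of the ratio $\theta(S+t)/\theta(S)$; consequently each product of ratios $\theta^{(m)}(S)/\theta(S)$ appearing in the determinant assembles into a single Bell polynomial $\mathbold{B}_{|\pi_k|}(S)$ attached to the block $\pi_k$, with $S=\sum_{i\in\pi_k}z_i$.

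The main obstacle I anticipate lies in the last column of $M_n$, whose entries $\theta^{(n-i+1)}(0)/(n-i+1)!$ are evaluated at the argument $0$, where $\theta$ vanishes and the logarithmic derivatives of $\theta$ acquire poles. One must show that these ``diagonal'' contributions regularize to exactly the distinguished index $j$ singled out by the outer sum over $j\in[n]$ and $\pi\in\Pi_{[n]\setminus\{j\}}$, and that they simultaneously manufacture the Eisenstein pieces $2\mathbb{G}_m$ by which $\mathcal{E}_m^*$ differs from the bare $(\ln\theta)^{(m)}$ occurring in the off-diagonal entries. Tracking this quasi-modular bookkeeping---verifying that the apparent poles cancel and that the $\mathbb{G}_m$-terms aggregate precisely into the $\mathcal{E}_m^*$ of the Bell polynomials---is the delicate part; the remaining passage from ordered compositions to set partitions is routine. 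As consistency checks I would confirm the cases $n=1,2$ by direct expansion and note that both sides carry the quasi-elliptic and quasi-modular transformation behavior established in Theorem~\ref{thmquasiellipticitygba}, which rigidifies the identity once the principal singular parts are matched.
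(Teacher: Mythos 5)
You reduce the theorem, via Theorem \ref{thmTngwintermsofBintro}, to the purely theta-theoretic identity
\[
\sum_{\text{all permutations of }z_1,\cdots,z_n}\det M_{n}
\;=\;
\sum_{j\in[n]}\sum_{\pi\in \Pi_{[n]\setminus\{j\}}}\frac{\mathbold{B}_{\pi}}{|\pi|}\,,
\]
which is a correct reduction and a genuinely different route from the paper's. The gap is that you never actually prove this identity: the tool you invoke for the ``combinatorial heart,'' Theorem \ref{thmsumoverpartitions}, does not do the job you assign to it. That theorem expresses the ordered $A$-cycle integral $T_{n}^{\Omega}=[v^{0}]\left(\theta(\prod_k u_k)\,\Theta_{2n}\right)$ as a partition sum in the functions $G_{\pi}$ built from the $g_{b,a}$; it says nothing about re-summing the Bloch--Okounkov determinant $\det M_{n}$, nor about converting the nested composition (partial-flag) sum \eqref{eqnTnintermsofsumovercompositions} into the set-partition sum of complete Bell polynomials. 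That conversion is precisely the hard content of the statement, and your sketch defers it to ``delicate quasi-modular bookkeeping'' with no mechanism for carrying it out. (A symptom of the confusion: the last-column entries $\theta^{(n-i+1)}(0)/(n-i+1)!$ involve no division by $\theta(0)$, so no poles arise there at all; by \eqref{eqnthetalogexpansion} they are constants encoding the Eisenstein contributions. The genuine difficulty is the resummation itself, not a pole cancellation.)

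For contrast, the paper sidesteps any such direct verification. It shows the pairing equals the ordered $A$-cycle integral $T_{n}^{\Omega_{\mathrm{GW}}}$ (Proposition \ref{propGWpairing=bindpairing}), identifies that integral with the Bell-polynomial sum (Theorem \ref{thmTngwintermsofB}), and then proves $T_{n}=T_{n}^{\Omega_{\mathrm{GW}}}$ (Proposition \ref{propTn=Tngw}) by a rigidity argument: by Bloch--Okounkov, $T_{n}$ is determined by the difference equation \eqref{eqnTndifferenceequation}, the singular behavior \eqref{eqnTnsimplepole}, and a normalization, for which the paper substitutes pure-weight quasi-ellipticity. Each of these conditions is then checked on the partition sum: the difference equation is Corollary \ref{corTngwdifferenceequation} (a consequence of the binomial identity \eqref{eqnbinomialBell} for Bell polynomials), the pole structure follows from \eqref{eqnwLaurentcoefficientofS}, and pure weight is Theorem \ref{thmTngwintermsofB} itself. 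The cleanest repair of your proposal is to promote your closing ``consistency check'' into the actual proof: instead of matching the determinant and partition sums term by term, show that your right-hand side satisfies the Bloch--Okounkov characterization (difference equation, singularity structure, normalization), which is exactly the argument the paper runs.
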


This result is a reformulation of \eqref{eqnFnaszeromode} proved in \cite{Okounkov:2006, Okounkov:2006b} in its equivalent form \eqref{eqndefinfinitewedge}.
A practical way of evaluating \eqref{eqnFnaszeromode}
is also provided in \cite[Section 3.3]{Eskin:2006} (see also \cite{Milas:2003}).
However, we found  in the literature
no systematical studies on their evaluations 
besides reducing them to \eqref{eqndefinfinitewedge}.
We therefore decide to provide a second proof of this result by taking advantage of the fact that 
the quantity
$\langle \psi(u_{1}v_{1})\psi^{*}(v_{1})\cdots \psi(u_{n}v_{n})\psi^{*}(v_{n}) \rangle$ in \eqref{eqnFnaszeromode}
is purely algebro-geometric and explicit.
 Our proof of Theorem \ref{thmTn=Acycleintegralintro} proceeds by showing that the right hand side of  \eqref{eqnTnintermsofintergralintro}, which by
 Theorem \ref{thmTngwintermsofBintro} is given by the sum-over-paritions expression in  	 
 \eqref{eqnTngwpureweightstructureintro}, satisfy
 the same difference equations and singular behavior conditions for $T_{n}$ given in \cite{Bloch:2000} (see also \cite{Okounkov:2001}). The latter conditions  are enough to determine them uniquely.

The above configuration space integral result
fits in the general mirror symmetry philosophy that ``GW generating series = period integrals''.
However, it 
 is different from the original and more familiar formulation \cite{Dijkgraaf:1995} (see also \cite{Roth:2009mirror, Boehm:2014tropical, Goujard:2016counting, Li:2011BCOV, Li:2011mi, Li:2016vertex, Li:2020regularized}) for mirror symmetry of elliptic curves, which relates an individual Laurent coefficient
in \eqref{n-point-function} to a sum of period integrals of cohomology classes labeled by graphs.

The integration formula in Theorem \ref{thmTn=Acycleintegralintro}
then provides a new perspective in
 investigating properties of $F_{n}$
 using the tools of ordered $A$-cycle integrals and regularized integrals
developed in \cite{Li:2020regularized, Li:2022regularized}.
We demonstrate this by studying the elliptic and modular completion of $F_{n}$.\\

The structural results in Theorem \ref{thmTngwintermsofBintro} and Theorem \ref{thmTn=Acycleintegralintro} can
make it convenient to study further properties of GW
generating series.
We hope these particularly organized forms can
provide some insights in exploring
the relation between the rich structures (such as quasi-modularity) of GW generating series and stratification of configuration spaces.

\subsection*{Structure of the paper}
In Section \ref{secconstructionofGWclasses}
we construct the relative cohomology  $\varphi_{n}^{\mathrm{GW}}$ and homology class $\gamma_{n}^{\mathrm{GW}}$.
In Section \ref{secorderedAcycleintegralofC2n} we derive concrete formulas for \eqref{eqnparingofGWclasses} not only for the class
 $\gamma_{n}^{\mathrm{GW}}$ but also 
 for other natural cycle classes $\gamma_{n}$.
In Section \ref{secquasielliptic} we specialize to the class $\gamma_{n}^{\mathrm{GW}}$
and prove the main results Theorem \ref{thmTngwintermsofBintro} and Theorem \ref{thmTn=Acycleintegralintro}
by assuming Theorem \ref{thmquasiellipticitygba}, as well as prove some other results regarding quasi-ellipticity and quasi-modularity of $T_{n}$.
We also study properties of $T_{n}$ using the tool of 
regularized integrals.
In Appendix \ref{appendixellipticfunctions} we review a few basic properties of the Jacobi theta functions and prove
Theorem \ref{thmquasiellipticitygba}.
We relegate some involved computations on $n$-point functions to Appendix \ref{appendixevaluations}.

\subsection*{Acknowledgement}

 The author would like to thank Si Li and Yefeng Shen for fruitful collaborations on related topics, thank Dingxin Zhang and Zijun Zhou for helpful communications and discussions.
The author is supported by the
national key research and development
 program of China (No. 2022YFA1007100) and the Young overseas high-level talents introduction plan of China.

\subsubsection*{Notation and conventions}

\begin{itemize}
	\item 
	For notational simplicity, we shall often 
	suppress the arguments of a function whenever they are clear from the surrounding texts.
	\item
	Throughout this work, when we need to keep track of the variables in a construction such as a function $f_{n}$ that are taken from some finite set $J$ or sequence $\mathbold{K}$
	of variables  with cardinality $n$, we use the notation
	$f_{J}$ or $f_{\mathbold{K}}$.
	Correspondingly, we shall
	use the notation $S$ abusively for both the set $S$ and its cardinality $|S|$.	
	\item \label{conventionuS}
	With the map $\exp: \mathbb{C}\rightarrow \mathbb{C}^{*}$ understood, 
	we use the additive coordinate $z$ on $\mathbb{C}$ and the multiplicative coordinate $u=e^{z}$ on $\mathbb{C}^{*}$ interchangably.
	
	For a nonempty set $S\subseteq [n]:=(1,2,\cdots,n)$, we use the notation
	\[
	z_{S}=\sum_{i\in S} z_{i}\,,\quad u_{S}=\prod_{i\in S}u_{i}\,.
	\]
	\item
	Fix a space $X$ and a nonempty finite set $S$, let $X_{a}$ be a copy of $X$ labeled by $a\in S$ and	
	\[
	X_{S}=\prod_{a\in S}X_{a}\,,\quad \mathsf{Conf}_{S}(X)=
	\big\{(p_{a})_{a\in S}\in \prod_{a\in S}X_{a}~|~ p_{a}\neq p_{b}~\text{for}~ a\neq b\in S
	\big\}\,.
	\]
	The complement of $\mathrm{Conf}_{S}(X)$ in $X_{S}$ is called the big diagonal in $X_{S}$.
	\item
	We denote collectively the points $P_{i},1\leq i\leq n$ and $Q_{j},1\leq j\leq n$ by $P,Q$, respectively.
	Similarly  we apply the same convention to denote a sequence of coordinates, e.g., $z=(z_1,\cdots, z_n)$.
\item 
The notation $[t^k]f$ represents the degree $t^{k}$ coefficient of the formal Laurent series $f$ in $t$.	
	
\end{itemize}

\section{Construction of the pairing $
	\langle \gamma_{n}^{\mathrm{GW}}\,, ~ \varphi_{n}^{\mathrm{GW}} \rangle$ }
\label{secconstructionofGWclasses}

In this section, we shall construct a relative cohomology class $ \varphi_{n}^{\mathrm{GW}}$
and a relative homology class $\gamma_{n}^{\mathrm{GW}}$, then use the relative pairing between singular cohomology and singular homology to define the desired pairing  $
\langle \gamma_{n}^{\mathrm{GW}}\,, ~ \varphi_{n}^{\mathrm{GW}} \rangle$ in \eqref{eqnparingofGWclasses}. These constructions are designed to ``geometrize" \eqref{eqnFnaszeromode}.
\\

Let  $\kappa \in \mathsf{Pic}^{(g-1)}(E)$ be the unique odd theta characteristic $\kappa^{\otimes 2}=K_{E}$ which is also the unique one
satisfying $h^{0}(E, \kappa)\neq 0$. It is the Riemann constant for the corresponding odd theta divisor $\Theta$ on the Jacobian variety $\mathsf{Jac}(E)\cong 
 \mathsf{Pic}^{(0)}(E)$.
The line bundle $\mathcal{O}_{\mathsf{Jac}(E)}([\Theta])$
has $\theta$ as the unique holomorphic section that vanishes along $\Theta$, normalized such that $(\partial_z \theta)(0)=1$.

Let
$\eta\in \mathsf{Pic}^{(g-1)}(E)$ and $c$  be the corresponding shift
$\eta\otimes \kappa^{-1}$ on $\mathrm{Jac}(E)$. 
Consider the map
\[
\delta\times \mathrm{id}: E\times E\rightarrow J\times \Delta\,,\quad (P,Q)\mapsto (P-Q,Q)\,,
\]
where $J$ is a copy of $E$ that is isomorphic to $\mathsf{Jac}(E)$ via the Abel-Jacobi map with respect to the holomorphic volume form $dz \in H^{0}(E,K_{E})$.
Under the inclusion map  
\[\Delta\rightarrow J\times \Delta\xrightarrow{(\delta\times \mathrm{id})^{-1}} E\times E\,,\]
the space $\Delta$ can be regarded as the diagonal in $E\times E$. 
Its $A$-cycle class $A\in H_{1}(\Delta)$ is then mapped to $A\otimes 1+1\otimes A\in H_{1}(E\times E)$ under the K\"unneth decomposition.
Denote $\check{\eta}=\eta^{-1}\otimes K_{E}$ to be the Serre dual of $\eta$.
It is well known that (see e.g., \cite{Raina:1989, Birkenhake:2013complex})
\begin{equation}\label{eqndeltaLc}
\delta^{*}L_{c}\cong (\check{\eta}\boxtimes \eta)\otimes \mathcal{O}_{E\times E } ([\Delta])\,,\quad
L_{c}:=\mathcal{O}_{J}([\Theta-c])\,,
\end{equation}
here $\Theta-c$ is the divisor obtained by translating $\Theta$ by $-c$.
Furthermore,  if $c\neq 0$, then up to constant 
there is a unique section of
$(\check{\eta} \boxtimes \eta)(\Delta)$. The Szeg\"o kernel $S_{c}(P-Q)$ is such a section which is uniquely
fixed by specifying its residue along the diagonal $\Delta$.
%

Now take $2n$ copies of the elliptic curve labeled by $[n]=(1,2,\cdots, n), [\bar{n}]=(\bar{1},\cdots, \bar{n})$.
Denote also 
$E_{[n]}=E_{1}\times E_{2}\times \cdots \times E_{n}$.
Let 
\begin{equation}\label{eqndeltaidmap}
	\mathbold{\delta}\times \mathbold{id}:E_{[n]}\times E_{[\bar{n}]}\rightarrow \mathsf{J}_{[n]}\times \mathsf{\Delta}_{[n]}
\end{equation} be the Cartesian product of the maps
\begin{eqnarray*}\delta_{k}\times \mathrm{id}_{\bar{k}}: E_{k}\times E_{\bar{k}}&\rightarrow& J_{k}\times \Delta_{k}\,,\\
(P_{k},Q_{k})&\mapsto& (P_{k}-Q_{k},Q_{k})\,,\quad  k=1,2,\cdots, n\,.
\end{eqnarray*}
Denote the linear coordinate on the $k$th component of $\mathsf{J}_{[n]}, \mathsf{\Delta}_{[n]} $
by $z_{k},w_{k}$, respectively. Let $u_{k}=e^{z_{k}},v_{k}=e^{w_{k}}$.

Let $J^{\circ}$ be the punctured Jacobian with the origin $0$ removed, and the corresponding Cartesian product be $\mathsf{J}_{[n]}^{\circ}\subseteq \mathsf{J}_{[n]}$.
Let also $\mathsf{Conf}_{[n]}(\Delta)$ be the complement of the big diagonal in $\mathsf{\Delta}_{[n]}\cong \Delta^{n}$.
For later use, we introduce the various divisorson the above two spaces $E_{[n]}\times E_{[\bar{n}]}, \mathsf{J}_{[n]}\times \mathsf{\Delta}_{[n]}$ as shown in Table \ref{table-divisors}. We shall use the notation  $|\mathsf{D}|=\mathsf{D}_{0}+\mathsf{D}_{\infty}$ to denote the support of $\mathsf{D}=-\mathsf{D}_{0}+\mathsf{D}_{\infty}$.
\begin{table}[h]
	\centering
	\caption{Divisors on $E_{[n]}\times E_{[\bar{n}]}$ and $\mathsf{J}_{[n]}\times \mathsf{\Delta}_{[n]}$ that are related by the map $\mathbold{\delta}\times \mathbold{id}$\,.}
	\label{table-divisors}
	\renewcommand{\arraystretch}{1.5} 
	\begin{tabular}{c|c}
		\hline
		divisor on $E_{[n]}\times E_{[\bar{n}]}$ &  	divisor on $\mathsf{J}_{[n]}\times \mathsf{\Delta}_{[n]}$ \\
		\hline
		$	\mathbold{\Delta}=-\sum_{i<j}\Delta_{ij}-\sum_{i<j}\Delta_{\bar{i}\bar{j}}
		+\sum_{i,j}\Delta_{i\bar{j}}$  &  	$\mathsf{D}=(\mathbold{\delta}\times \mathbold{id})^{-1}(\mathbold{\Delta})$\\
		$+\sum_{i<j}\Delta_{ij}+\sum_{i<j}\Delta_{\bar{i}\bar{j}}$
		& $\mathsf{D}_{0}$\\	
		$+\sum_{i<j}\Delta_{\bar{i}\bar{j}}$
		& $\mathsf{J}_{[n]}\times (\mathsf{\Delta}_{[n]}-\mathsf{Conf}_{[n]}(\Delta))$
		\\	
		$+\sum_{i,j}\Delta_{i\bar{j}}$
		& $\mathsf{D}_{\infty}$\\
		$+\sum_{i}\Delta_{i\bar{i}}$ &  $(\mathsf{J}_{[n]}-\mathsf{J}_{[n]}^{\circ})\times \mathsf{\Delta}_{[n]}$\\
		$+\sum_{i,j:\,i\neq j}\Delta_{i\bar{j}}$
		& $\mathsf{D}_{\infty}^{\circ}$\\
				\hline
	\end{tabular}
\end{table}

\subsection{Construction of $\Phi_{n}^{\mathrm{GW}}$  }

Using the map \eqref{eqndeltaidmap} we have the following maps in 	Figure \ref{figure:figconstructionofclasses}.
\begin{figure}[h]
	\centering
	\[
	\xymatrix{
				&	E_{[n]}\times E_{[\bar{n}]}\times \mathsf{Pic}^{(0)}(E)\ar[r]^{\mathbold{\delta}\times \mathbold{id}\times \mathrm{id}}   & \quad
		\mathsf{J}_{[n]}\times \mathsf{\Delta}_{[n]}\times \mathsf{Pic}^{(0)}(E)\ar[ddl]_{\pi_{1}}\ar[dd]^{\pi_{2}}\ar[ddr]^{\pi_{3}}&
		\ar[l]
		\mathsf{J}_{[n]}^{\circ}\times \mathsf{\Delta}_{[n]}\times \mathsf{Pic}^{(0)}(E)\\
		&\mathcal{F}\ar[d]&& \mathcal{K}\ar[d]\\
		&	\mathsf{J}_{[n]}&	\mathsf{Conf}_{[n]}(\Delta)& 	\mathsf{Pic}^{(0)}(E) &
	}
	\]	
	\caption{Maps between spaces.}
	\label{figure:figconstructionofclasses}
\end{figure}
Here $\pi_{k},k=1,2,3$ are the projections from $\mathsf{J}_{[n]}\times\mathsf{\Delta}_{[n]}\times \mathsf{Pic}^{(0)}(E)$ to its three components, respectively.
The sheaves $\mathcal{F},\mathcal{K}$ will be defined later (see \eqref{eqnsheafFasresidueofPoincare}).

Let $\boxtimes$ be the external tensor product of vector bundles.
For example, $\mathcal{F}_1\boxtimes \mathcal{F}_2\boxtimes \mathcal{F}_3:=\pi_{1}^{*}\mathcal{F}_{1}\otimes \pi_{2}^{*}\mathcal{F}_2\otimes \pi_{3}^{*}\mathcal{F}_3$ on the space $\mathsf{J}_{[n]}\times\mathsf{\Delta}_{[n]}\times \mathsf{Pic}^{(0)}(E) $.
\begin{dfn}\label{dfnPincaresheaf}
	Identify $\mathsf{Pic}^{(0)}(E)$ with $\mathsf{Pic}^{(0)}(J)$ via the map $c\mapsto L_{c}L_{0}^{-1}$. let 
	$\mathcal{P}$ be the Poincar\'e sheaf on $J\times \mathsf{Pic}^{(0)}(E) $, normalized 
	such that  $\mathcal{P}|_{\{0\}\times \mathsf{Pic}^{(0)}(E)}$ is trivial.
	Correspondingly let the sheaf 
	$\mathcal{P}_{[n]}$ on $\mathsf{J}_{[n]}\times \mathsf{Pic}^{(0)}(E)$
	be the tensor product of pull backs of the Poincar\'e sheaves on
	$J_{k}\times \mathsf{Pic}^{(0)}(E),k\in[n] $.
	Denote
	\begin{eqnarray*}
		\mathsf{D}_{\Theta}&:=&\pi_{3}^{-1} \Theta
		\subseteq \mathsf{J}_{[n]}\times \mathsf{\Delta}_{[n]}\times \mathsf{Pic}^{(0)}(E)\,,\\
		\mathbold{\Delta}_{\Theta}&:=&(\mathbold{\delta}\times \mathbold{id}\times \mathrm{id} )^{-1}\mathsf{D}_{\Theta} 	\subseteq E_{[n]}\times E_{[\bar{n}]}\times \mathsf{Pic}^{(0)}(E) \,.
	\end{eqnarray*}
Let
$\pi_{ij}$ be the projection to the product of the $i,j$ components.
	Define the following sheaves
	\begin{equation}\label{eqnLpioncare}
		\mathcal{L}=\mathcal{P}_{[n]}\boxtimes \mathcal{O}_{\mathsf{\Delta}_{[n]}}(\pi_{12}^{-1}\mathsf{D}) \,,\quad
		\mathcal{M}
		=(\mathbold{\delta}\times \mathbold{id}\times \mathrm{id} )^{*} 	\mathcal{L} \,.
	\end{equation}
\end{dfn}
\begin{rem}
Let $m: J\times \mathsf{Pic}^{(0)}(E)\rightarrow J$ be the addition map on $J\cong \mathsf{Pic}^{(0)}(E)$.
Then explicitly one has
\[
\mathcal{P}\cong \mathcal{O}_{J\times  \mathsf{Pic}^{(0)}(E)}(-[0\times \mathsf{Pic}^{(0)}(E)]
-[J\times \Theta]+[\mathrm{ker}\,m])\,.
\]
A meromorphic section is given by the Szeg\"o kernel mentioned above.
\end{rem}
By the construction in Definition \ref{dfnPincaresheaf}, one has 
$\mathcal{P}|_{J\times \{c\}}=L_{c}L_{0}^{-1}$.
	From \eqref{eqndeltaLc},
it is direct to check that when $c\neq 0$ 
\begin{eqnarray}\label{eqnPoincarebundleisoinducing}
	\mathcal{L}|_{\mathsf{J}_{[n]}\times \mathsf{\Delta}_{[n]}\times \{c\}}&\cong&  \boxtimes_{[n]}L_{c}L_0^{-1}
	\boxtimes
	\mathcal{O}_{\mathsf{\Delta}_{[n]}} 
(\mathsf{D})\,,\nonumber\\
	\mathcal{M}|_{E_{[n]}\times E_{[\bar{n}]}\times \{c\}}&\cong& \boxtimes_{[n]}\check{\eta}  \boxtimes \boxtimes_{[n]}\eta(\mathbold{\Delta})\,.
\end{eqnarray}
The sheaf  $\mathcal{M}$
then gives the data $\{\mathcal{M}_{c}\}_{c\in \mathsf{Pic}^{(0)}(E)} $ on $\mathsf{Pic}^{(0)}(E)$ for the family $\pi_{3}: E_{[n]}\times E_{[\bar{n}]}\times \mathsf{Pic}^{(0)}(E)\rightarrow \mathsf{Pic}^{(0)}(E)$.
It is shown in  \cite{Raina:1989}  that when $c\neq 0$,
up to scalar $\mathcal{M}_{c}$ has a unique meromorphic section:
$
h^{0}(E_{[n]}\times E_{[\bar{n}]},
\mathcal{M}_{c})=1
$.
They glue to the following section  that has a simple pole at the divisor $\mathbold{\Delta}_{\Theta}$
\begin{equation}\label{eqnintrinsicsection}
	C_{2n}	
	\in H^{0}( E_{[n]}\times E_{[\bar{n}]}\times \mathsf{Pic}^{(0)}(E),\mathcal{M}(\mathbold{\Delta}_{\Theta}))\,.
\end{equation}
This particular section is the quantity 
$
\big\langle \psi(u_{1}v_{1})\psi^{*}(v_{1})\cdots \psi(u_{n}v_{n})\psi^{*}(v_{n}) \big\rangle$
in \eqref{eqnFnaszeromode},
see \eqref{eqndefofC2n}
below for its concrete expression.
We  take 
its residue at $\mathbold{\Delta}_{\Theta}=E_{[n]}\times E_{[\bar{n}]}\times \Theta\cong E_{[n]}\times E_{[\bar{n}]}$
\begin{equation}\label{eqnPoincareresidueofC2n}
	\varpi:=\mathrm{Res}_{\mathbold{\Delta}_{\Theta}} (C_{2n})\in H^{0}(E_{[n]}\times E_{[\bar{n}]},\mathcal{M}(\mathbold{\Delta}_{\Theta})|_{\mathbold{\Delta}_{\Theta}})\,.
\end{equation}
Here we have used the fact that the canonical sheaves of 
$E_{[n]}\times E_{[\bar{n}]}\times \mathsf{Pic}^{(0)}(E),\mathbold{\Delta}_{\Theta}$ are trivial, equipped with natural trivializations induced from $dz \in H^{0}(E,K_{E})$.

We next construct sheaves on the spaces in 	Figure \ref{figure:figconstructionofclasses} and  morphisms between them.

\begin{itemize}
		\item

	Let $\mathcal{F}$ be the following 
  sheaf on $\mathsf{J}_{[n]}$ 
	\begin{equation}\label{eqnsheafFasresidueofPoincare}
	\mathcal{F}:=	\mathcal{P}_{[n]}(\mathsf{J}_{[n]}\times \Theta)|_{\mathsf{J}_{[n]}\times \Theta}\,.
	\end{equation}
	Let also
$\mathcal{K}$ be the canonical sheaf on $\mathsf{\Delta}_{[n]}$.
	From \eqref{eqnPoincarebundleisoinducing} one has
	\begin{equation}\label{eqndeltapullback}
		\left((\mathbold{\delta}\times \mathbold{id})^{-1}\right)^{*}\mathcal{M}(\mathbold{\Delta}_{\Theta})|_{\mathbold{\Delta}_{\Theta}}
		\cong
		\mathcal{L}(\mathsf{D}_{\Theta})|_{\mathsf{D}_{\Theta}}
		\cong  (\mathcal{F}\boxtimes \mathcal{K})(\mathsf{D})\,.
	\end{equation}

\item 
Let $\mathbold{j}: \mathsf{J}_{[n]}\times \mathsf{\Delta}_{[n]}-\mathsf{D}_{\infty}\rightarrow \mathsf{J}_{[n]}\times \mathsf{\Delta}_{[n]}$ be the open embedding.
Applying adjunction to
\[
(\mathcal{F}\boxtimes \mathcal{K})(\mathsf{D}_{\infty})\rightarrow
  \mathbold{j}_{*} \mathbold{j}^{*}(\mathcal{F}\boxtimes \mathcal{K})(\mathsf{D}_{\infty})\,,
\]
one obtains a map
\begin{equation}\label{eqnj1j2pullback}
	\mathbold{j}^{*}
	: H^{0} (\mathsf{J}_{[n]}\times \mathsf{\Delta}_{[n]}, (\mathcal{F}\boxtimes \mathcal{K})(\mathsf{D}_{\infty}))
	\hookrightarrow 	H^{0}(\mathsf{J}_{[n]}\times \mathsf{\Delta}_{[n]}-\mathsf{D}_{\infty},  
\mathcal{F}\boxtimes \mathcal{K})\,.
\end{equation}

\item 
Let $\pi_{1}^{\circ}: \mathsf{J}_{[n]}\times \mathsf{\Delta}_{[n]}-\mathsf{D}_{\infty}
\rightarrow \mathsf{J}_{[n]}^{\circ}$ be the projection induced by the open embedding $\mathbold{j}$
and $\pi_{1}$.
Similarly use the super- and sub-scripts $\circ$ in the notation to indicate the pull back constructions induced by the open embedding $\mathbold{j}$.

Applying the relative de Rham resolution, we 
obtain
\[
\pi_{1,\circ}^{-1}\mathcal{F}\hookrightarrow \mathcal{G}^{\bullet}\otimes
\pi_{1,\circ}^{*}\mathcal{F}^{\circ}:= (\Omega^{\bullet}_{(\mathsf{J}_{[n]}\times \mathsf{\Delta}_{[n]}-\mathsf{D}_{\infty})/{\mathsf{J}}^{\circ}_{[n]}}\otimes_{\mathcal{O}_{\mathsf{J}_{[n]}\times \mathsf{\Delta}_{[n]}-\mathsf{D}_{\infty}}}
\pi_{1,\circ}^{*}\mathcal{F}^{\circ}, d_{\mathrm{dR}}\otimes 
\mathrm{id})\,.
\] 
Applying the functor $R^{n}\pi_{1,*}^{\circ}(-)$ and adjunction, one obtains the isomorphism
\begin{equation*}
 R^{n}\pi_{1,*}^{\circ}(\mathcal{G}^{\bullet}\otimes
	\pi_{1,\circ}^{*}\mathcal{F}^{\circ} )\xrightarrow{\cong }
	R^{n}\pi_{1,*}^{\circ}\pi_{\circ}^{-1}\mathcal{F}^{\circ}\xrightarrow{\cong }
	\mathcal{F}^{\circ}\otimes_{\mathbb{C}} R^{n}\pi_{1,*}^{\circ}\mathbb{C}\,.
\end{equation*}
Taking $H^{0}(-)$ gives a map
\begin{equation}\label{eqnpushforwardcohomoloy}
	\mathfrak{r}: H^{0}(\mathsf{J}^{\circ}_{[n]}, R^{n}\pi_{1,*}^{\circ}(\mathcal{G}^{\bullet}\otimes
	\pi_{1,\circ}^{*}\mathcal{F}^{\circ} ))\rightarrow 
	H^{0}(\mathsf{J}^{\circ}_{[n]},
	\mathcal{F}^{\circ}\otimes_{\mathbb{C}} R^{n}\pi_{1,*}^{\circ}\mathbb{C})\,.
\end{equation}

\item
The standard spectral sequence machinery gives a map
\begin{equation*}
	\pi_{1,*}^{\circ}(
	\mathcal{F}^{\circ}\boxtimes \mathcal{K})
	\rightarrow 
	R^{n}\pi_{1,*}^{\circ}(\mathcal{G}^{\bullet}\otimes
	\pi_{1,\circ}^{*}\mathcal{F}^{\circ} )
\end{equation*}
which after taking $H^{0}(-)$ gives a map 
\begin{equation}\label{eqnspectralsequence}
	\mathfrak{s}: H^{0}(\mathsf{J}_{[n]}\times \mathsf{\Delta}_{[n]}-\mathsf{D}_{\infty},  
	\mathcal{F}^{\circ}\boxtimes \mathcal{K})
	\rightarrow 
H^{0}(\mathsf{J}^{\circ}_{[n]}, R^{n}\pi_{1,*}^{\circ}(\mathcal{G}^{\bullet}\otimes
	\pi_{1,\circ}^{*}\mathcal{F}^{\circ} ))\,.
\end{equation}

\end{itemize}

Combining \eqref{eqndeltapullback}, 
\eqref{eqnj1j2pullback},
 \eqref{eqnspectralsequence} and \eqref{eqnpushforwardcohomoloy},
one obtains a map
	\begin{equation}\label{eqncompositionmapgivingGWcohomologyclass}
 \mathfrak{r}\circ\mathfrak{s}\left(	\mathbold{j}^{*}
((\mathbold{\delta}\times \mathbold{id})^{-1})^{*}- \right):
 H^{0}(E_{[n]}\times E_{[\bar{n}]},\mathcal{M}(\mathbold{\Delta}_{\Theta})|_{\mathbold{\Delta}_{\Theta}})\rightarrow 
 H^{0}(\mathsf{J}^{\circ}_{[n]},
 \mathcal{F}^{\circ}\otimes_{\mathbb{C}} R^{n}\pi_{1,*}^{\circ}\mathbb{C})\,.
\end{equation}

We now obtain a cohomology class $\Phi_{n}^{\mathrm{GW}}$
as follows. 
\begin{dfn}\label{dfnrelativeGWcohomologyclass}
	The class $\Phi_{n}^{\mathrm{GW}}$ is 
	defined as
	\[
	\Phi_{n}^{\mathrm{GW}}= \mathfrak{r}\circ\mathfrak{s}
	\left(
	\mathbold{j}^{*}
	((\mathbold{\delta}\times \mathbold{id})^{-1})^{*}\varpi 
	\right)
	\in 
	 H^{0}\left(\mathsf{J}^{\circ}_{[n]},
	\left(\mathcal{P}^{\circ}_{[n]}(\mathsf{J}^{\circ}_{[n]}\times \Theta)|_{\mathsf{J}^{\circ}_{[n]}\times \Theta}\right)\otimes_{\mathbb{C}} R^{n}\pi^{\circ}_{1,*}\mathbb{C}\right)
	\,.
	\]
\end{dfn}

\subsection{Construction of $\gamma_{n}^{\mathrm{GW}}$  }
\label{secconstructionofhomology}

Let $A\in H_{1}(\Delta)$ be the $A$-cycle class on $\Delta$.
Let $A^{\mathrm{GW}}\in H^{0}(\mathsf{J}^{\circ}_{[n]}, (R^{n}\pi_{1,*}^{\circ}\mathbb{Z})^{*})$
be the relative homology class
given by the trivial product 
\[
\mathsf{J}^{\circ}_{[n]}\times  (A\otimes \cdots\otimes A)\subseteq \mathsf{J}^{\circ}_{[n]}\times H_{n}(\Delta_{[n]})\,,
\]
 where 
$ (A\otimes \cdots\otimes A)$ has been regarded as an element in $   H_{n}(\Delta_{[n]})$ under the K\"unneth decomposition.

 \begin{dfn}\label{dfnrelativeGWhomologyclass}
 Let as before $\mathbb{C}^{*}\rightarrow J\cong E$ be the covering map.
Similarly
 let $p:\widetilde{\mathsf{J}^{\circ}_{[n]}}\rightarrow \mathsf{J}^{\circ}_{[n]}$
 and $\widetilde{\mathsf{D}_{\infty}}\rightarrow {\mathsf{D}_{\infty}}$ be the corresponding (fiberwise) covering maps; and $\widetilde{\pi_{1}^{\circ}}:\widetilde{\mathsf{J}^{\circ}_{[n]}}\times  \mathsf{\Delta}_{[n]} -\widetilde{\mathsf{D}_{\infty}}\rightarrow \widetilde{\mathsf{J}^{\circ}_{[n]}}$ be the lift  $p^{*}\pi_{1}^{\circ}$ (see  
 Figure \ref{figure:figliftingtocovering}).
   	\begin{figure}[h]
 	\centering\[
 	 	\xymatrix{
 &&	\widetilde{\mathsf{J}^{\circ}_{[n]}}\times  \mathsf{\Delta}_{[n]} -\widetilde{\mathsf{D}_{\infty}}
 	\ar[dd]^{\widetilde{\pi_{1}^{\circ}}}
  && {\mathsf{J}^{\circ}_{[n]}}\times  \mathsf{\Delta}_{[n]} -{\mathsf{D}_{\infty}}\ar[dd]^{\pi_{1}^{\circ}} &&\\
 	 &&	&& &&\\
 	 &&	 \widetilde{\mathsf{J}_{[n]}^{\circ}}\ar[rr]^{p}	&&\mathsf{J}_{[n]}^{\circ}&&
 	}\]
 	\caption{Lifting to the covering of $\mathsf{J}_{[n]}^{\circ}$.}
 	\label{figure:figliftingtocovering}
 \end{figure} 
	For each $(u_1,\cdots, u_{n})\in \widetilde{\mathsf{J}^{\circ}_{[n]}}$, define $\gamma_{n}^{\mathrm{GW}}$ to be the cycle
	given by the image of a cycle of the form
			\[
			\Gamma_{n}^{\mathrm{GW}}:=\Big\{ v\in (\mathbb{C}^{*})^{n}~|~
		|v_{j}|=\mathrm{constant}~\text{for}~j\in [n]\,,
		~\text{subject to}~
				|{u_{i}v_{i}\over v_{j}}|,
		|{v_{i}\over u_{j}v_{j}}|>1~\text{for}~i<j\,\Big\}
		\]
		under the quotient map $(\mathbb{C}^{*})^{n}\rightarrow \mathsf{\Delta}_{[n]}$. See 
		Figure \ref{figure:constructionofgammaGW} below for an illustration.
	\end{dfn}
 
 \begin{figure}[h]
 	\centering
 		\centering
 	\includegraphics[scale=1.2]{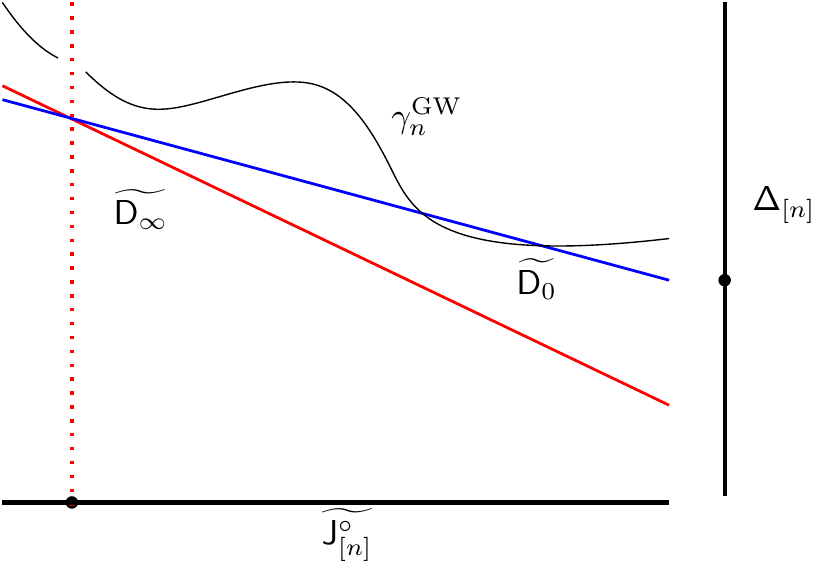}
 	\caption{Construction of the cycle $\gamma_{n}^{\mathrm{GW}}$ that avoids the divisor $\widetilde{\mathsf{D}_{\infty}}$ (shown in red). }
 	\label{figure:constructionofgammaGW}
  \end{figure}
 
 Since $\gamma_{n}^{\mathrm{GW}}$ is closed and avoids $\widetilde{\mathsf{D}_{\infty}}$, it gives a cycle class in 
 	$H^{0}(\widetilde{\mathsf{J}_{[n]}^{\circ}}, (R^{n}\widetilde{\pi_{1,*}^{\circ}}\mathbb{Z})^{*})$ which is still  denoted by $\gamma^{\mathrm{GW}}$ by abuse of notation.
By construction, it is a lift of 
	$A^{\mathrm{GW}}\in  H^{0}(\mathsf{J}^{\circ}_{[n]}, (R^{n}\pi_{1,*}^{\circ}\mathbb{Z})^{*})$ along the  map in homology induced by the covering map $p:\widetilde{\mathsf{J}^{\circ}_{[n]}}\rightarrow \mathsf{J}^{\circ}_{[n]}$.


\subsection{The pairing $
	\langle \gamma_{n}^{\mathrm{GW}}\,, ~ \varphi_{n}^{\mathrm{GW}} \rangle$}
\begin{dfn}\label{dfnrelativeGWcohomologyclasslifted}
	The class $\varphi_{n}^{\mathrm{GW}}$ is 
	defined as
	\[\varphi_{n}^{\mathrm{GW}}:=
	p^{*}\Phi_{n}^{\mathrm{GW}}=p^{*} \circ\mathfrak{r}\circ\mathfrak{s}
	\left(
	\mathbold{j}^{*}
	((\mathbold{\delta}\times \mathbold{id})^{-1})^{*}\varpi 
	\right)
	\in 
	 H^{0}\left(\mathsf{J}^{\circ}_{[n]},
	\left(\widetilde{\mathcal{P}^{\circ}_{[n]}}(\widetilde{\mathsf{J}^{\circ}_{[n]}}\times \Theta)|_{\widetilde{\mathsf{J}^{\circ}_{[n]}}\times \Theta}\right)\otimes_{\mathbb{C}} R^{n}\widetilde{\pi^{\circ}_{1,*}}\mathbb{C}\right) 
	\,.
	\]
\end{dfn}

The paring in singular homology and singular cohomology leads to a map
\begin{eqnarray}\label{eqnchomologicalpairing}
\Big\langle-,- \Big\rangle: \quad
H^{0}\left(\widetilde{\mathsf{J}^{\circ}_{[n]}}, (R^{n}\widetilde{\pi_{1,*}^{\circ}}\mathbb{Z})^{*}\right)
&\times&
H^{0}\left(\widetilde{\mathsf{J}^{\circ}_{[n]}},\left(\widetilde{\mathcal{P}^{\circ}_{[n]}}(\widetilde{\mathsf{J}^{\circ}_{[n]}}\times \Theta)|_{\widetilde{\mathsf{J}^{\circ}_{[n]}}\times \Theta}\right)\otimes_{\mathbb{C}} R^{n}\widetilde{\pi_{1,*}^{\circ}}\mathbb{C}\right)\nonumber\\
&\rightarrow&H^{0}\left(\widetilde{\mathsf{J}^{\circ}_{[n]}},\widetilde{\mathcal{P}^{\circ}_{[n]}}(\widetilde{\mathsf{J}^{\circ}_{[n]}}\times \Theta)|_{\widetilde{\mathsf{J}^{\circ}_{[n]}}\times \Theta}\right)\,.
\end{eqnarray}
This then gives the desired pairing  $
\langle \gamma_{n}^{\mathrm{GW}}\,, ~ \varphi_{n}^{\mathrm{GW}} \rangle
$ in \eqref{eqnparingofGWclasses}.
This pairing 
in fact descends to a smooth section of
	$\mathcal{P}^{\circ}_{[n]}(\mathsf{J}^{\circ}_{[n]}\times \Theta)|_{\mathsf{J}^{\circ}_{[n]}\times \Theta}$
	after a natural procedure called elliptic completion. 
	See Section \ref{secellipticcompletion} and in particular Remark 
\ref{remellipticompletion}
for details.\\

Representing
	the cohomology class $\varphi_{n}^{\mathrm{GW}}$ in Definition \ref{dfnrelativeGWcohomologyclasslifted}
	as a relative holomorphic $(n,0)$-form as in \eqref{eqnspectralsequence},
 the pairing \eqref{eqnchomologicalpairing} then
	becomes the fiberwise integration along $\widetilde{\pi_{1}^{\circ}}: \widetilde{\mathsf{J}_{[n]}}\times \mathsf{\Delta}_{[n]}-\widetilde{\mathsf{D}_{\infty}}
	\rightarrow \widetilde{\mathsf{J}_{[n]}^{\circ}}$. 	
			For practical purpose of evaluation, later on we shall replace 	$\pi_{1}^{\circ}$ by 
	\[
  \mathsf{J}_{[n]}^{\circ}\times \mathsf{Conf}_{[n]}(\Delta)-|\mathsf{D}|
	\rightarrow \mathsf{J}_{[n]}^{\circ}\,,
	\]
	and subsequently choose the lift $\gamma_{n}^{\mathrm{GW}}$ such that it avoids  the full locus $|\widetilde{\mathsf{D}}|=\widetilde{\mathsf{D}}_{\infty}+\widetilde{\mathsf{D}}_{0}$. 
We shall double check in Proposition
\ref{propGWpairing=bindpairing} that this does not change the pairing.
Therefore,
\eqref{eqnchomologicalpairing} correspondingly 
becomes a relative configuration space integral.

\section{Evaluation of ordered $A$-cycle integrals}
\label{secorderedAcycleintegralofC2n}

In this section, we shall
give concrete expressions for the constructions in Section \ref{secconstructionofGWclasses}
and explicitly evaluate
\eqref{eqnparingofGWclasses}, in terms of Jacobi theta functions.

\subsection{$2n$-point functions $C_{2n}$ for free fermions}

We first recall some standard constructions from
 \cite{Birkenhake:2013complex} 
 that are needed for the evaluation of the pairing \eqref{eqnparingofGWclasses}.

In term of the linear coordinate $z$, 
the line bundle $\mathcal{O}_{\mathrm{Jac}(E)}([\Theta])$
has a holomorphic section $\vartheta_{({1\over 2}, {1\over 2})}$ that vanishes at $\Theta$ given by 
\[
\vartheta_{({1\over 2}, {1\over 2} ) }(z,q):=\sum_{n\in \mathbb{Z}}(-1)^{n+{1\over 2}}q^{{1\over 2} (n+{1\over 2} )^2}e^{(n+{1\over 2})z}\,.
\]
It 
satisfies the following automorphy
\begin{equation}\label{eqnautomorphyfactoroftheta}
\vartheta_{({1\over 2}, {1\over 2})}(z+\pii)=\vartheta_{({1\over 2}, {1\over 2})}(z)\,,\quad
\vartheta_{({1\over 2}, {1\over 2})}(z+\pii \tau)=-e^{-\pi \mathbf{i} \tau}e^{-z} \vartheta_{({1\over 2}, {1\over 2})}(z)\,.
\end{equation}
By the Jacobi triple product formula one has
\begin{equation}\label{eqntheta}
\theta(z)= {\vartheta_{({1\over 2}, {1\over 2} ) }(z,q)\over \partial_{z}\vartheta_{({1\over 2}, {1\over 2} ) }(z,q)|_{z=0}}=
\pii{\vartheta_{({1\over 2}, {1\over 2} ) }(z,q)\over -2\pi (q^{1\over 24}(q)_{\infty})^3}
=(q)_{\infty}(e^{z\over 2}-e^{-{z\over 2}})\prod_{k\geq 1}(1-q^{k}e^z)(1-q^{k}e^{-z})\,.
\end{equation}
The prime form on the elliptic curve is the section of $ \mathcal{O}_{E\times E}([\Delta])$  given by 
\begin{equation}\label{eqndefofprimeform}
E(P,Q)={ \vartheta_{({1\over 2}, {1\over 2})} (P-Q)\over  \theta'(0)}
=\theta(P-Q)\,.
\end{equation}

Let
$\eta\in \mathsf{Pic}^{(0)}(E)$ and $c$ be the corresponding shift
$\eta\otimes \kappa^{-1}$ on $\mathrm{Jac}(E)$.
 Denote by $\theta_{c}: z\mapsto \theta(z+c)$
which is a holomorphic section of  $L_{c}=\mathcal{O}_{\mathrm{Jac}}([\Theta-c])$.
Then $\theta_{c}(0)=0$ if and only if $c=0$, namely $\eta=\kappa$.
The
Szeg\"o kernel is the section of $(\check{\eta}\boxtimes \eta)(\Delta)$ given by
\begin{equation}\label{eqndefofSzegokernel}
S_{c}(P,Q)={\theta_{c} (P-Q)\over \theta_{c}(0)}{1\over  E(P,Q)}\,,\quad
c\neq 0\,.
\end{equation}
This is the $2$-point function $\langle \psi(P)\psi^{*}(Q)\rangle$ for free  fermions \cite{Raina:1989} on the elliptic curve $E$,
see also Appendix \ref{appendixorderings}.
In general, the $2n$-point function $ \langle \psi(P_{1})\psi^{*}(Q_{1})\cdots \psi(P_{n})\psi^{*}(Q_{n}) \rangle$ in \eqref{eqnFnaszeromode} for free fermions with $n\geq 2$  is given by \eqref{eqnintrinsicsection}, with
\begin{eqnarray}\label{eqndefofC2n}
C_{2n}(P, Q)=
{\theta_{c} (\sum P_i-\sum Q_j)\over \theta_{c}(0)}
{\prod_{i<j} E(P_i, P_j) E(Q_i, Q_j)\over \prod_i E(P_i, Q_i)\cdot \prod_{i<j}
E(P_i, Q_j) E(Q_i,P_j)}\,,\quad c\neq 0\,.
\end{eqnarray}
The above quantity has a first  order pole at $c=0\in  \mathsf{Pic}^{(0)}(E)$.
The  section $\varpi$ in \eqref{eqnPoincareresidueofC2n} is  its residue at $c=0$, which concretely is
\begin{equation}\label{eqnomegaC2n}
\varpi=( \theta_{c} (0)\cdot C_{2n})|_{c=0}=
\theta (\sum P_i-\sum Q_j))\cdot
{\prod_{i<j} E(P_i, P_j) E(Q_i, Q_j)\over \prod_i E(P_i, Q_i)\cdot \prod_{i<j}
	E(P_i, Q_j) E(Q_i,P_j)}\,.
\end{equation}

Fay's multi-secant identity (a.k.a. boson-fermion correspondence, see e.g., \cite{Raina:1989, Kawamoto:1988}) gives
\begin{equation}\label{eqnFaymulti-secant}
C_{2n}(P, Q)=\det\, (S_{c}(P_i, Q_j))\,.
\end{equation}
Using the additive coordinates $z_i+w_i,w_j$ and multiplicative coordinates $u_i v_i, v_j$ for $P_i, Q_{j}$ interchangebly, then under the map $\mathbold{\delta}\times \mathbold{id}\times \mathrm{id}
: E_{[n]}\times E_{[\bar{n}]}\times \mathsf{Pic}^{(0)}(E)\rightarrow
\mathsf{J}_{[n]}\times \mathsf{ \Delta}_{[n]}\times \mathsf{Pic}^{(0)}(E)$
we have
\[
	\left(
(\mathbold{\delta}\times \mathbold{id}\times \mathrm{id})^{-1}\right)^{*} C_{2n}={\theta_{c} (\prod_{i} u_{i})\over \theta_{c} (0)}
{\prod_{i<j} \theta({u_i v_{i}\over u_j v_j}) \theta({v_i\over v_j})\over
 \prod_i \theta(u_i ) \cdot \prod_{i<j}
\theta({u_i v_{i}\over  v_j})\theta({ v_{i}\over u_j v_j})}\,.
\]
From
\eqref{eqnautomorphyfactoroftheta}, it is direct to check
that
 indeed it has trivial automorphy factors in $v_{i},i=1,2,\cdots,n$, agreeing with \eqref{eqndeltapullback}.\\

In what follows, 
we ignore the cubersome notation such as 	$\left(
(\mathbold{\delta}\times \mathbold{id}\times \mathrm{id})^{-1}\right)^{*}$ when making identifications.

\subsection{Evaluation of ordered $A$-cycle integrals}

For ease of notation, we introduce 
\begin{equation}\label{eqnTheta2n}
\varpi=
\theta (\sum P_i-\sum Q_j)
\cdot \Theta_{2n}(P,Q)\,,\quad 
\Theta_{2n}(P,Q):= {\prod_{i<j} \theta(P_i, P_j) \theta(Q_i, Q_j)\over \prod_i \theta(P_i, Q_i)\cdot \prod_{i<j}
\theta(P_i, Q_j) \theta(Q_i,P_j)}
\,.
\end{equation}

\begin{dfn}
Let $[n]=(1,2,\cdots, n), \overline{[n]}=(\bar{1},\bar{2},\cdots, \bar{n})$.
Consider the region
\begin{equation}\label{eqndomainoforderedAcycleintegral}
\Omega:=\Big\{(u,v)|  \bigcap_{\substack{(a,b)\in ([n]\cup\overline{ [n]})^2\\
a\neq b}} (-1)^{\chi_{ab}}(| x_{a}|-|x_{b}|)>0\Big\}\,,
\quad
x_{i}:=u_{i}v_{i}\,,~ x_{\bar{j}}:=v_{j}\,,
\end{equation}
where $\chi_{ab}$ is the value at $(a,b)$ of a function
\begin{equation}\label{eqndomainoforderedAcycleintegralintermsofchi}
\chi: ([n]\cup\overline{ [n]})^2\setminus \{(a,a)|a\in [n]\cup\overline{ [n]}\}\rightarrow \{0,1\}\,.
\end{equation}
	Let $\gamma_{n}^{\Omega}\subseteq \Omega$
be the cycle given by
	\begin{equation}\label{eqncycleavoidingpole}
	|v_{j}|=\mathrm{constant}~\text{for}~j\in [n]\,,\quad
	\text{subject to}\quad 
	|{x_{a}\over x_{b}}|^{(-1)^{\chi_{ab}}}>1~\text{for}~a,b\in [n]\cup [\bar{n}]\,.
	\end{equation}
Define
\begin{equation}\label{eqnTnOmega}
T_{n}^{\Omega}:=[v_1^{0} \cdots v_n^{0}] \left(\theta(\prod_{k} u_{k})\, \Theta_{2n}\right)
=({1\over \pii})^{n}\int_{\gamma_{n}^{\Omega}} {dv_{1}\over v_1}\cdots {dv_{n}\over v_n}\,
\theta(\prod_{k} u_{k}) \,\Theta_{2n}(P,Q)
\,.
\end{equation}
In what follows we shall denote $[v^{0}]:=[v_1^{0} \cdots v_n^{0}]$ for notational simplicity.
\end{dfn}

Note that extracting the $v^{0}$-term requires developing
the Fourier series and thus the choice of the domain  of convergence $\Omega$.
For example,
from  \eqref{eqnthetaFourier}  in Appendix \ref{appendixellipticfunctions} and \eqref{eqnautomorphyfactoroftheta} one has
\begin{equation} \label{eqnanalyticcontinutationofZ}
  {\theta'(v)\over \theta(v)}=
  \begin{cases}
-{1\over 2}-\sum_{k\neq 0}{v^{k}\over 1-q^{k}}\,,\quad |q|<|v|<1\,,\\
 {1\over 2}-\sum_{k\neq 0}{q^{k}v^{k}\over 1-q^{k}}\,,\quad |1|<|v|<|q|^{-1}\,.
  \end{cases}
\end{equation}
  In particular, the $v^{0}$-terms are given by $-{1\over 2},{1\over 2}$, respectively.
  This subtlety results in a technical difficulty in computing
  $[v^{0}] \,(\theta(\prod_{k} u_{k})\Theta_{2n})$ in the region
\eqref{eqndomainoforderedAcycleintegral} which we shall resolve in this section.\\

By definition, the domain $\Omega$ and the characteristic function $\chi$ determine each other. Later we shall often refer to them as orderings.
As explained in Section \ref{secconstructionofGWclasses}, one has
\begin{equation}\label{eqnpairingisintegration}
	\Big \langle ~\gamma_{n}^{\Omega}\,, ~ \varphi_{n}^{\mathrm{GW}} ~\Big\rangle=T_{n}^{\Omega}\,.
\end{equation}
Computationally, this is the fiberwise integration over  
a product of $A$-cycles on $E$ ordered according to \eqref{eqndomainoforderedAcycleintegral}.

	The lift $\gamma_{n}$ of the cycle class $A^{\mathrm{GW}}$ in Section \ref{secconstructionofhomology} is certainly not unique.
Of particular interest are those determined by the region\footnote{The ordering in \eqref{eqndomainoforderedAcycleintegralWick}
is the one in which one applies Wick's theorem to write
the $2n$-point function $C_{2n}$ in terms of the determinant of $2$-point functions as in \eqref{eqnFaymulti-secant}.}
\begin{equation}\label{eqndomainoforderedAcycleintegralWick}
\Omega_{\mathrm{Wick}}:=\Big\{(u,v)~|~
|u_{1}v_{1}|>|u_{2}v_{2}|>\cdots>|u_{n}v_{n}|>|v_{n}|>\cdots>|v_2|> |v_1|\Big\}\,.
\end{equation}
and the region in \eqref{eqnGWordering}
(see \cite[Equation 32, Equation 34]{Eskin:2006})
\begin{equation}\label{eqndomainoforderedAcycleintegralgw}
\Omega_{\mathrm{GW}}:=
\Big\{(u,v)~|~
|u_{1}v_{1}|>|v_1|>|u_{2}v_{2}|>|v_2|>
\cdots>|u_n v_n|>|v_n|\Big\}\,.
\end{equation}
The corresponding $\chi$-functions satisfy
\begin{equation}\label{eqndomainoforderedAcycleWickGWintermsofchi}
\chi^{\mathrm{Wick}}_{i\bar{j}}= 0\,,\quad \chi^{\mathrm{GW}}_{i\bar{j}}=
\begin{cases}
0 \,,\quad i<j \\
1\,,\quad i>j
\end{cases}\,.
\end{equation}
We will also consider orderings $\Omega_{\mathrm{bound}}$ that satisfy
\begin{equation}\label{eqnboundordering}
 \chi_{i\bar{j}}+\chi_{j\bar{i}}=1\,,\quad i\neq j\,.
\end{equation}
This condition implies that
$|u_{i}v_{i}|>|v_{j}|$ if and only if $|v_{i}|>|u_{j}v_{j}|$
for any $i\neq j$, namely $i,\bar{i}$ are bound together.
Later (see Proposition\ref{propGWpairing=bindpairing}) we shall
show that  cycles determined by orderings satisfying \eqref{eqnboundordering} 
all produce the same result for the integral \eqref{eqnpairingisintegration} and concide
with $ \langle \gamma_{n}^{\mathrm{GW}}\,, ~ \varphi_{n}^{\mathrm{GW}}\rangle$.

\subsubsection{Fay's multi-secant identity and Frobenius-Stickelberger formula}

We need some preliminary results before evaluating $T_{n}^{\Omega}$.

\begin{dfn}
For any $n\times n$ matrix $M_{n}$ with $n\geq 2$, let
$M_{n,\check{i}\check{j}}$ be the minor of the entry $(M_{n})_{ij}$, namely the submatrix formed by deleting the
$i$th row and $j$th column of $M_{n}$.
 Let
$
Z_{n}
$ be the $n\times n$ matrix whose $(i,j)$ entry is
$(Z_{n})_{ij}={\theta'\over \theta}({u_{i}v_{i}\over v_{j}})$.
\end{dfn}

 \begin{lem}[Frobenius-Stickelberger formula]
 \label{lemuseFayforAcycleintegral}
Let the notation be as above and $n\geq 2$.
Then in any region $\Omega$
one has
\begin{equation} \label{eqnuseFayforAcycleintegral}
\theta(\prod_{k} u_{k})
\cdot
\Theta_{2n}=\left(\sum_{j=1}^{n}\sum_{i=1}^{n}(-1)^{i+j}
\det Z_{n,\check{i}\check{j}}
\right)\,.
\end{equation}
 \end{lem}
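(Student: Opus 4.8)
The plan is to obtain \eqref{eqnuseFayforAcycleintegral} by taking the residue at $c=0$ of Fay's multi-secant identity \eqref{eqnFaymulti-secant}, combined with a purely linear-algebraic expansion of the determinant of a rank-one perturbation of the all-ones matrix.

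First I would rewrite Fay's identity \eqref{eqnFaymulti-secant} in a form suited to the limit $c\to 0$. Since each entry of the Szeg\"o-kernel matrix carries a common factor $\theta_c(0)^{-1}$ and $E(P,Q)=\theta(P-Q)$, factoring these out of the $n\times n$ determinant and comparing with $C_{2n}=\theta_c(0)^{-1}\theta_c(\sum P_i-\sum Q_j)\,\Theta_{2n}$ from \eqref{eqndefofC2n} gives
\[
\theta_c\!\left(\textstyle\prod_k u_k\right)\Theta_{2n} = \theta_c(0)^{-(n-1)}\,\det\!\left(\frac{\theta_c(P_i-Q_j)}{\theta(P_i-Q_j)}\right)_{1\le i,j\le n}\,,
\]
where I have used that $\sum P_i-\sum Q_j$ corresponds multiplicatively to $\prod_k u_k$. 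By \eqref{eqnomegaC2n} the left-hand side tends to $\varpi=\theta(\prod_k u_k)\Theta_{2n}$ as $c\to 0$, which is exactly the left-hand side of \eqref{eqnuseFayforAcycleintegral}; so the entire content is to evaluate the $c\to 0$ limit of the right-hand side.

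For this I would expand each matrix entry as $\theta_c(P_i-Q_j)/\theta(P_i-Q_j)=1+c\,\tfrac{\theta'}{\theta}(P_i-Q_j)+O(c^2)$, so that the matrix equals $\mathbf{J}+c\,Z_n+O(c^2)$, where $\mathbf{J}$ is the all-ones matrix and $Z_n$ is the matrix of the statement (indeed $\tfrac{\theta'}{\theta}(P_i-Q_j)=(Z_n)_{ij}$ in multiplicative coordinates). The key linear-algebra fact is that, since $\mathbf{J}$ has rank one, a multilinear expansion of the determinant in its columns kills every term in which two or more columns are taken from $\mathbf{J}$; hence the determinant vanishes to order $n-1$ in $c$, and the leading coefficient comes from choosing exactly one column from $\mathbf{J}$ and the remaining $n-1$ columns from $Z_n$. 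Expanding the resulting determinant along the single all-ones column yields
\[
\det\!\left(\mathbf{J}+c\,Z_n+O(c^2)\right)=c^{n-1}\sum_{i=1}^{n}\sum_{j=1}^{n}(-1)^{i+j}\det Z_{n,\check{i}\check{j}}+O(c^n)\,.
\]
Because each $O(c^2)$ entry-correction forces the total power up to $c^n$ or higher, it does not affect this leading coefficient.

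Finally, since $\theta$ is odd with $\theta'(0)=1$ one has $\theta_c(0)=\theta(c)=c+O(c^3)$, so $\theta_c(0)^{-(n-1)}=c^{-(n-1)}(1+O(c^2))$. Multiplying by the determinant expansion and letting $c\to 0$ gives $\varpi=\sum_{i,j}(-1)^{i+j}\det Z_{n,\check{i}\check{j}}$, which is \eqref{eqnuseFayforAcycleintegral}. The identity is one of meromorphic functions in $(u,v)$, and $\theta'/\theta$ is region-independent as such, so it holds in any region $\Omega$; the choice of $\Omega$ only enters later, when the two sides are Fourier-expanded, cf.\ \eqref{eqnanalyticcontinutationofZ}. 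The main obstacle is the bookkeeping of vanishing orders --- confirming that the determinant vanishes to order exactly $n-1$ and that the higher Taylor terms of the entries and of $\theta(c)$ do not contaminate the matched coefficient; everything else is formal.
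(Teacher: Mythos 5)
Your proof is correct and follows essentially the same route as the paper's: both start from Fay's multi-secant identity \eqref{eqnFaymulti-secant}, extract the leading $c^{n-1}$ coefficient after accounting for the $\theta(c)^{n-1}$ factor, and then expand the determinant multilinearly to reduce to cofactor expansions along an all-ones column. The only cosmetic difference is organizational --- the paper performs column subtractions (the $j$th minus the $n$th column) and column swaps before applying Cramer's rule, whereas you invoke the rank-one structure of the all-ones matrix $\mathbf{J}$ directly --- but these are the same multilinearity argument.
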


 \begin{proof}

Fay's multi-secant identity \eqref{eqnFaymulti-secant}
gives
\begin{equation}\label{eqnFaymulti-secantsimplified}
\theta(c)^{n-1}\cdot
\theta (c+\sum_i (P_{i}-Q_i) )\cdot
\Theta_{2n}
=\det\left( ( {\theta(c+P_i-Q_j)\over   \theta(P_i-Q_j)} )_{1\leq i,j\leq n}\right)\,.
\end{equation}
 Comparing the degree $(n-1)$ Taylor coefficients in $c$ of the two sides of
 \eqref{eqnFaymulti-secantsimplified}, one is led to
\[
\theta(\prod_{k} u_{k})
\cdot
\Theta_{2n}
=[c^{n-1}]\det \left( (  \sum_{m\geq 0} {\theta^{(m)}(P_i-Q_j)\over m!\, \theta(P_i-Q_j)}c^m)_{1\leq i,j\leq n}\right)\,.
\]
 Subtracting the $j$th column by the $n$th column for $1\leq j\leq n-1$ and extract the
 $c^{n-1}$ coefficient of the right hand side, one obtains
 \[
\theta(\prod_{k} u_{k})
\cdot
\Theta_{2n}
=\det 
\begin{blockarray}{cccc}
		&  & j & n \\
	\begin{block}{c(ccc)}
	&\cdots&\cdots& 1\\
\vdots&&\vdots&\vdots\\
i&\cdots&{\theta'\over \theta}({u_{i}v_{i}\over v_{j}})-{\theta'\over \theta}({u_{i}v_{i}\over v_{n}})& 1\\
\vdots&&\vdots&\vdots\\
	&\cdots&\cdots& 1\\
	\end{block}
\end{blockarray}
 \,.
\]
By the linearity of the determinant in the column vectors
 and its  properties under the swapping of columns,
 we see that
\begin{equation}\label{eqnFrobenius}
\theta(\prod_{k} u_{k})
\cdot
\Theta_{2n}
=\sum_{\ell=1}^{n}\,\quad
\det 
\begin{blockarray}{ccccc}
	&  & j & \ell & \\
	\begin{block}{c(cccc)}
		&\cdots&\cdots& 1&\cdots\\
		\vdots&&\vdots&\vdots&\\
		i&\cdots&{\theta'\over \theta}({u_{i}v_{i}\over v_{j}})&1 &\cdots\\
		\vdots&&\vdots&\vdots&\\
		&\cdots&\cdots&1&\cdots\\
	\end{block}
\end{blockarray}
 \,.
\end{equation}
Finally, applying Cramer's rule to the $j$th column to compute each summand above, we obtain \eqref{eqnuseFayforAcycleintegral}.
 \end{proof}

The above result \eqref{eqnFrobenius} is usually written in a more symmetric form
\begin{equation}\label{eqnFrobeniussymmetricform}
\theta(\prod_{k} u_{k})
\cdot
\Theta_{2n}
=\det\,\mathcal{Z}_{n}\,,\quad
\mathcal{Z}_{n}:=
\begin{pmatrix}
0 & 1 & \cdots & 1\\
-1 & {\theta'\over \theta}({u_{1}v_1\over v_1}) &  \cdots & {\theta'\over \theta}({u_{1}v_1\over v_n})\\
 &\vdots& \cdots& \vdots\\
-1 &  {\theta'\over \theta}({u_{n}v_n\over v_1}) &\cdots&  {\theta'\over \theta}({u_{n}v_n\over v_n})\\
\end{pmatrix}\,.
\end{equation}

\begin{dfn}\label{dfncJ}
Let the notation be as above.
For any nonempty subset $J\subseteq [n]$, consider the following matrix obtained from replacing the entries $(\mathcal{Z}_{n})_{ij},i,j\in J$ by their $v^{0}$-coefficients
\begin{equation}\label{eqnv0ZJ}
[v^0]\mathcal{Z}_{J}=
\begin{pmatrix}
0 & 1 & \cdots & 1\\
-1 & &  \cdots & \\
 &\vdots& [v^0] {\theta'\over \theta}({u_{i}v_{i}\over v_{j}})& \vdots\\
-1 & &\cdots& \\
\end{pmatrix}
=^{\eqref{eqnanalyticcontinutationofZ}}
\begin{pmatrix}
0 & 1 & \cdots & 1\\
-1 & &  \cdots & \\
 &\vdots&  {1\over 2}(-1)^{\chi_{i\bar{j}}}& \vdots\\
-1 & &\cdots& \\
\end{pmatrix}\,,\quad i,j\in J\,.
\end{equation}

\end{dfn}

\begin{dfn}\label{dfnchiK}
		Define
	\begin{equation}\label{eqndfncJ}
		c_{J}=\sum_{\sigma\in \mathfrak{S}^{\mathrm{cyl}}(J\cup \{0\})}(-1)^{\mathrm{sgn}(\sigma)}\prod_{a=0}^{|J|}([v^0]\mathcal{Z}_{J})_{a,\sigma(a)}\,,
	\end{equation}
	where $ \mathfrak{S}^{\mathrm{cyl}}(J\cup \{0\})$ stands for the cyclic subgroup of order $|J|+1$
	of the permutation group $\mathfrak{S}(J\cup \{0\})$.
	For any non-recurring sequence
$\mathbold{K}=(k_r, k_{r-1},\cdots ,k_1), 1\leq r\leq n$, define
\begin{equation}\label{eqndfnchiK}
\chi_{\mathbold{K}}=\sum_{a=1}^{r}\chi_{k_{a+1}\overline{k_{a}}}\,,
\end{equation}
where the cyclic ordering on sub-indices is used, for example $k_{r+1}:=k_{1}$.
\end{dfn}
The quantity $c_{J}$ in Definition \ref{dfnchiK} is the contribution to  $\det ([v^{0}]\mathcal{Z}_{J})$ in Definition \ref{dfncJ} from those permutations that are derrangments of order $|J|+1$.
It  can be made more concrete  as follows.
Let $\mathbold{J}=(j_r,\cdots, j_1)$ be a non-recurring sequence of length $|\mathbold{J}|=r, 1\leq r\leq n$, with the convention that
$j_{r+1}=j_1$.
Extending the definition of $\chi$ such that $\chi_{0\bar{j}}=0,\chi_{i\bar{0}}=1$, then by Cramer's rule one collects
the terms $([v^{0}]\mathcal{Z}_{J})_{j_2,\bar{0}}([v^{0}]\mathcal{Z}_{J})_{0,\bar{j_1}}\prod_{a=2}^{r} ([v^{0}]\mathcal{Z}_{J})_{j_{a+1},\bar{j_a}}$.
This leads to
\begin{equation}\label{eqncJintermsofsumoversequences}
c_{J}=\sum_{\mathbold{J}}(-1)^{|\mathbold{J}|-1}
\prod_{a=2}^{r}
{1\over 2}(-1)^{\chi_{j_{a+1}\overline{j_{a}}}}=(-{1\over 2})^{|J|-1}\sum_{\mathbold{J}} (-1)^{\chi_{\mathbold{J}}-\chi_{j_2 \overline{j_1}}}\,.
\end{equation}
where the summation is over all sequences $\mathbold{J}$
whose underlying set is $J$.
Here we have used the convention that
 $\prod_{a=2}^{r}
{1\over 2}(-1)^{\chi_{j_{a+1}\overline{j_{a}}}}=1$ if $r=1$.

 \begin{lem}
 \label{lemv0termofFay}
Let the notation be as above and $n\geq 2$.
Let
$Z_{[n]\setminus J}$ be the matrix obtained from $Z_{[n]}$ by deleting the
rows and columns from $J$.
Then in any region $\Omega$,
one has
\begin{equation} \label{eqnv0termofFay}
T_{n}^{\Omega}=
[v^0]\left(
\theta(\prod_{k} u_{k})
\cdot
\Theta_{2n}\right)
=\sum_{J:\, \emptyset\subsetneq J\subseteq [n]} c_{J}\cdot
 [v^0]( \det Z_{[n]\setminus J})
\,,
\end{equation}
where  the convention that  $\det Z_{\emptyset}=1$ is understood.
 \end{lem}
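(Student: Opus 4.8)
The plan is to compute $T_n^\Omega = [v^0]\big(\theta(\prod_k u_k)\,\Theta_{2n}\big)$ by expanding the Frobenius--Stickelberger determinant $\det\mathcal{Z}_n$ from \eqref{eqnFrobeniussymmetricform} and extracting its constant term in all of the $v$-variables. First I would write the permutation expansion of $\det\mathcal{Z}_n$ over the augmented index set $\{0\}\cup[n]$ and decompose each permutation $\sigma$ into disjoint cycles. Since $(\mathcal{Z}_n)_{00}=0$, the border index $0$ can never be a fixed point, so it lies in a unique nontrivial cycle; let $J\subseteq[n]$ be the set of its remaining nonzero entries, necessarily $\emptyset\subsetneq J\subseteq[n]$. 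The complementary indices $[n]\setminus J$ are permuted among themselves by some $\tau\in\mathfrak{S}([n]\setminus J)$. Because the signature is multiplicative over disjoint cycles, $\mathrm{sgn}(\sigma)$ splits as the sign of the $0$-cycle times $\mathrm{sgn}(\tau)$, and the entry product splits accordingly. Grouping the terms of $\det\mathcal{Z}_n$ by the value of $J$ then gives a sum over $J$ of (a $0$-cycle contribution) times $\sum_{\tau}\mathrm{sgn}(\tau)\prod_{i\in[n]\setminus J}(Z_n)_{i\tau(i)} = \det Z_{[n]\setminus J}$.

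The next step is the crucial variable-separation observation. The border index $0$ carries no $v$-variable (the entries $(\mathcal{Z}_n)_{0j}=1$ and $(\mathcal{Z}_n)_{i0}=-1$ are constants), so the interior factors in the $0$-cycle involve only $\{v_k : k\in J\}$, whereas the factors of $\det Z_{[n]\setminus J}$ involve only $\{v_i : i\in[n]\setminus J\}$. These two families are disjoint, so extracting the overall constant term $[v^0]$ factorizes as the product of $[v_J^0]$ of the $0$-cycle part and $[v_{[n]\setminus J}^0](\det Z_{[n]\setminus J})$; since the latter involves no $v_k$ with $k\in J$, it is just $[v^0](\det Z_{[n]\setminus J})$, which I leave unsimplified.

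It then remains to identify $[v^0]$ of the $0$-cycle part with $c_J$, which is the heart of the matter. A cyclic order $0\to k_1\to\cdots\to k_s\to 0$ of $J\cup\{0\}$ produces the chain product $\prod_{a=1}^{s-1}\frac{\theta'}{\theta}\big(u_{k_a}v_{k_a}/v_{k_{a+1}}\big)$ together with the border signs $1$ and $-1$. Expanding each factor as a Laurent series in $v_{k_a}/v_{k_{a+1}}$ via \eqref{eqnanalyticcontinutationofZ} and demanding that the total power of every $v_{k_b}$ vanish yields a triangular system: $v_{k_1}$ occurs in a single factor, forcing its Laurent index to $0$, which forces the next index to $0$, and so on down the chain to $v_{k_s}$. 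Hence only the constant ($x^0$) term of each factor survives, and by \eqref{eqnanalyticcontinutationofZ} these are exactly the entries $\frac12(-1)^{\chi_{i\bar j}}$ of $[v^0]\mathcal{Z}_J$ in \eqref{eqnv0ZJ}. Summing over all cyclic orders of $J\cup\{0\}$ with the induced signs therefore reproduces the full-cycle sum defining $c_J$ (see \eqref{eqndfncJ} and \eqref{eqncJintermsofsumoversequences}). Assembling the two factors over all $J$, with the convention $\det Z_\emptyset=1$ for $J=[n]$, yields \eqref{eqnv0termofFay}.

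I expect the main obstacle to be making the step ``$[v^0]$ of a product equals the product of $[v^0]$'s'' rigorous, as this is false in general. It holds here precisely because the cycle through the border index $0$ is an \emph{open} chain in the $v$-variables, whose exponent-telescoping is triangular with the unique solution in which all intermediate indices vanish. For a \emph{closed} cycle contained in $[n]\setminus J$ the same telescoping leaves one free loop index, so no entrywise reduction is possible; this is exactly why $\det Z_{[n]\setminus J}$ must be retained intact rather than evaluated term by term. Carefully isolating this open-versus-closed dichotomy, and checking that the sign and border-factor bookkeeping thread through, is the technical crux of the proof.
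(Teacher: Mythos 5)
Your proof is correct and follows essentially the same route as the paper's: expand the bordered determinant \eqref{eqnFrobeniussymmetricform} over permutations of $[n]\cup\{0\}$, group the summands by the cycle through the border index $0$, use the disjointness of the $v$-variables across cycle parts to factor $[v^0]$, and identify the $0$-cycle contribution with $c_{J}$ via \eqref{eqndfncJ}, leaving $[v^0](\det Z_{[n]\setminus J})$ intact for the complementary indices. The open-chain telescoping argument you single out as the crux is precisely what the paper's terse sentence ``taking the $v^{0}$-term of each summand forces taking the $v^{0}$ term for each cycle part'' leaves implicit, so your write-up is a faithful, more detailed version of the same proof.
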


\begin{proof}
One computes the determinant in \eqref{eqnFrobeniussymmetricform}
by definition, in which one
 organizes the
summands according 
to the cycle that contains $0$ in the standard cycle decomposition for an element in $\mathfrak{S}([n]\cup \{0\})$.
Taking the $v^{0}$-term of each summand forces taking the  $v^{0}$ term
for each cycle part. 
The desired  claim then follows from
\eqref{eqndfncJ}.
\end{proof}

\begin{ex}\label{exTnn=12cases}
We now compute the  cases with $n=1,2$
using \eqref{eqnFrobenius} and \eqref{eqnv0termofFay}.
When $n=1$, we have
\[
T_{1}^{\Omega}= [v_{1}^0 c^{0}]  \theta_{c}(0) S_{c}(P_1-Q_1)
 =[v_{1}^0 c^{0}] {\theta_{c}({u_{1}})  \theta'(0)\over \theta(u_1)}=1\,.
 \]
When $n=2$, from \eqref{eqnFrobenius} we have
\[
\theta(\prod_{k} u_{k})
\cdot
\Theta_{2n}
=\det\,
\begin{pmatrix}
{\theta'\over \theta}({u_{1}v_{1}\over v_1}) & 1\\
{\theta'\over \theta}({u_{2}v_{2}\over v_1}) & 1
\end{pmatrix}
+
\det\,
\begin{pmatrix}
1 & {\theta'\over \theta}({u_{1}v_{1}\over v_2}) \\
1& {\theta'\over \theta}({u_{2}v_{2}\over v_2})
\end{pmatrix}\,.
\]
It follows that
\[
[v_{1}^0 v_{2}^0]\,
\left(
\theta(\prod_{k} u_{k})
\cdot\Theta_{2n}
\right)
={\theta'\over \theta}(u_2)+(-1)^{1+2}\cdot {1\over 2}(-1)^{\chi_{2\bar{1}}}
+{\theta'\over \theta}(u_1)+(-1)^{1+2}\cdot {1\over 2}(-1)^{\chi_{1\bar{2}}}\,.
\]
For the orderings $\Omega_{\mathrm{Wick}},\Omega_{\mathrm{GW}}$, one has
\[
T_{n}^{\Omega_{\mathrm{Wick}}}= {\theta'\over \theta}(u_1)-{1\over 2}+ {\theta'\over \theta}(u_2)-{1\over 2}\,,\quad
T_{n}^{\Omega_{\mathrm{GW}}}
= {\theta'\over \theta}(u_1)+{1\over 2}+ {\theta'\over \theta}(u_2)-{1\over 2}\,.
\]
\xxqed
\end{ex}

Note that by the parity of $\theta$ and \eqref{eqnTheta2n} we have
\[\left(
\theta(\prod_{k} u_{k})
\cdot
\Theta_{2n}\right)|_{\prod_{k}u_{k}=1}=0\,.
\]
However, it can be the case that
\[\left([v^{0}](\theta(\prod_{k} u_{k})
\cdot
\Theta_{2n}) \right)|_{\prod_{k}u_{k}=1}\neq
0\,,
\]
since the $[v^{0}]$ operation (which depends on the ordering) does not commute
with evaluation at the locus $\prod_{k}u_{k}=1$.
The $n=2$ case in Example \ref{exTnn=12cases} provides a simplest example of such phenomenon.
See Section \ref{secTnTgw} for further discussions.

\subsubsection{Ordered $A$-cycle integrals of $C_{2n}$ in terms of sums over partitions}

Both the coefficients $c_{J}$ and the determinants in \eqref{eqnv0termofFay} of
Lemma
 \ref{lemv0termofFay}
depend on the ordering in $\Omega$.
We now study properties of these quantities.
We start by recalling some standard notions.\\

 For each $\sigma\in \mathfrak{S}_{n}$, let its standard form of the cycle decomposition be
\begin{equation}\label{eqnstandardcycledecomposition}
 \sigma=(a_{1}\cdots a_{\lambda_1})(a_{\lambda_1+1}\cdots a_{\lambda_1+\lambda_{2}})
 \cdots
 (a_{\lambda_1+\cdots+\lambda_{\ell-1}+1}\cdots a_{\lambda_{1}+\cdots+\lambda_{\ell}}):=\sigma_1\cdots\sigma_{\ell}\,,
\end{equation}
 with the property that
 $a_{1},a_{\lambda_1+1},\cdots,a_{\lambda_1+\cdots+\lambda_{\ell-1}+1}$
 are the largest within the elements in the corresponding cycles and
\[
a_{1}<a_{\lambda_1+1}<\cdots <a_{\lambda_1+\cdots+\lambda_{\ell-1}+1}\,.
\]

\begin{dfn}\label{dfnstatisticsinpermutations}
	For a permutation $\sigma\in\mathfrak{S}_{n}$, 
let $\mathrm{des}(\sigma),\mathrm{cdes}(\sigma)$ be the cardinalities of the
descent set $\mathrm{Des}(\sigma)$ and cyclic descent set $\mathrm{cDes}(\sigma)$, respectively
\begin{eqnarray*}
\mathrm{Des}(\sigma)&:=&\{1\leq i\leq n-1|~\sigma(i)>\sigma(i+1)\}\,,\\
\mathrm{cDes}(\sigma)&:=&\{ 1 \leq i\leq n~|~\sigma(i)>\sigma([i+1]^{\mathrm{c}})\}\,.
\end{eqnarray*}
Here $[i+1]^{\mathrm{c}}:=i+1$ if $i\leq n-1$ and $[n+1]^{\mathrm{c}}:=1$.

\end{dfn}

  Following the notation in \cite{Bloch:2000}, we denote the set of all partitions of $[n]$ by
$\Pi_{[n]}$. An element $\pi\in \Pi_{[n]}$ is an unordered collection of disjoint nonempty subsets of $[n]$ called parts
\begin{equation}\label{eqnpiexpression}
\pi=\{\pi_{1},\cdots, \pi_{\ell}\}\,.
\end{equation}
Similarly, we denote by $\Gamma_{[n]}$ the set of all compositions/lists of $[n]$, namely, ordered collections of subsets. Each composition is of the form
$\gamma=(\gamma_1,\cdots, \gamma_{\ell})$ whose length is $\ell(\gamma)=\ell$.
We denote
\[
(-1)^{\pi}=(-1)^{\gamma}:=(-1)^{n-\ell}\,.
\]
These are the same as
\[
(-1)^{\mathrm{sgn}(\sigma)}=(-1)^{\sum_{k}(\lambda_{k}-1)}=
(-1)^{n-\ell}\,.\]

 Let $\mathbf{S}(n,\ell)=\Pi_{[n],\ell}$ be the set of partitions of $[n]$ into $\ell$ disjoint nonempty subsets, and $\mathbf{s}(n,\ell)$
 the set of permutations in $\mathfrak{S}_{n}$ with $\ell$ cycles.
 Their cardinalities are $S(n,\ell), (-1)^{n-\ell}s(n,\ell)$, where $S(n,\ell),s(n,\ell)$
 are the 2nd and 1st Stirling numbers respectively.
It follows that
 \[
 |\mathbf{s}(n,\ell)|=\sum_{\pi\in \mathbf{S}(n,\ell)}\prod_{k=1}^{\ell} (-1)^{|\pi_{k}|-1}(|\pi_k|-1)!\,.
 \]
Furthermore,
 \[
\Pi_{n}=\bigsqcup_{\ell=1}^{n}\mathbf{S}(n,\ell)\,,\quad
\Gamma_{n}=\bigsqcup_{\ell=1}^{n}\mathbf{S}(n,\ell)\rtimes \mathfrak{S}(\ell)\,,\quad
\mathfrak{S}_{n}=\bigsqcup_{\ell=1}^{n}\mathbf{s}(n,\ell)\,.
 \]

We now define some functions and series that shall enter the evaluation 
of ordered $A$-cycle integrals of $\theta(\prod_{k} u_{k})
\Theta_{2n}$.

\begin{dfn}
	\label{dfnringofquasiellipticfunctions}
	Let $\mathrm{QE}=\mathbb{C}[\mathbb{G}_{2},\mathbb{G}_{4},\mathbb{G}_{6}][\{{\theta^{(k)}\over \theta}\}_{k\geq 1}]$ be the graded ring,
	with the grading of ${\theta^{(k)}\over \theta}$ being $k$ and that of  $\mathbb{G}_{a}$ being $a$.
	An element $f\in \mathrm{QE} $ of degree $m$ is called a quasi-elliptic function of mixed weight with leading weight $m$; $f$ is said to be of pure weight if it is homogeneous of degree $m$.	
\end{dfn}
\begin{rem}
The ring $\mathrm{QE}$ is related to the ring of 
quasi-Jacobi forms of index zero, see \cite{Eichler:1985, Libgober:2009} for details.
\end{rem}
\begin{dfn}\label{dfngbafunction}
For non-negative integers $a,b$ satisfying $0\leq a\leq b$, define the formal series
\begin{equation}\label{eqngbafunction}
  g_{b, a}(u):=\left((-1)^{a}
  ({1\over 2})^{b}+(-1)^{b}
  \sum_{k> 0} {q^{(b-a)k}u^{k}\over (1-q^{k})^{b}}+(-1)^{b}
  \sum_{k< 0} {q^{(b-a)k}u^{k}\over (1-q^{k})^{b}}\right)\,.
  \end{equation}
  We use the convention that $g_{0,0}=1$.
  \end{dfn}

\begin{thm}\label{thmquasiellipticitygba}
For each $0\leq a\leq b$, the following statements hold.
\begin{enumerate}[i).]
	\item The series $g_{b,a}(u)$ is locally absolutely convergent in the region $|q|^{a}<|u|<|q|^{a-b}$, and thus defines a holomorphic function there.
It analytically continues to a meromorphic function on $\mathbb{C}^{*}$ with only possible poles at $u= q^{m},m\in \mathbb{Z}$.
	\item 
	The analytic continuation satisfies 
		\[
	g_{b,a}
	\in \mathbb{C}\Big[\{{\theta^{(k)}\over \theta}\}_{0\leq k\leq b+1}\Big]\,.
	\]
	In particular, $g_{b,a}$ is a quasi-elliptic function 
	of mixed weight, with leading weight $b+1$.
	Furthermore, for fixed $b$, the leading part of $g_{b,a}$ is independent of $a$.
\end{enumerate}
\end{thm}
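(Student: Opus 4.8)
The plan is to read everything off from how the two logarithmic operators $D:=u\partial_u=\partial_z$ and $q\partial_q=\tfrac{1}{\pii}\partial_\tau$ act on the defining Fourier series, anchored by the base identity $g_{1,a}=\theta'/\theta$ and the heat equation for $\theta$.

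\emph{Part (i).} As $k\to+\infty$ one has $(1-q^{k})^{-b}\to1$, so the $k>0$ summand is asymptotic to $(q^{\,b-a}u)^{k}$; writing $k=-m$ and $(1-q^{-m})^{b}=(-1)^{b}q^{-mb}(1-q^{m})^{b}$ shows the $k<0$ summand is asymptotic to $(-1)^{b}(q^{\,a}u^{-1})^{m}$. Comparison with these two geometric series gives locally uniform absolute convergence exactly on $|q|^{a}<|u|<|q|^{a-b}$, a nonempty annulus since $0\le a\le b$, hence holomorphy there. For the continuation I would expand $(1-q^{k})^{-b}=\sum_{m\ge0}\binom{b+m-1}{m}q^{mk}$ in each half and resum over $k$, rewriting the halves as $\sum_{m\ge0}\binom{b+m-1}{m}\tfrac{q^{\,b-a+m}u}{1-q^{\,b-a+m}u}$ and $(-1)^{b}\sum_{\ell\ge0}\binom{b+\ell-1}{\ell}\tfrac{q^{\,a+\ell}}{u-q^{\,a+\ell}}$; each is a locally uniformly convergent series of rational functions, so $g_{b,a}$ continues to a meromorphic function on $\C^{*}$ with simple poles contained in $\{q^{m}:m\in\Z\}$.

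\emph{Part (ii).} The base case $b=1$ is exactly \eqref{eqnanalyticcontinutationofZ}: $g_{1,0}$ and $g_{1,1}$ are the two Fourier expansions of $\theta'/\theta$ on the adjacent annuli $1<|u|<|q|^{-1}$ and $|q|<|u|<1$, so $g_{1,a}=\theta'/\theta\in\C[\theta^{(k)}/\theta]$. For the inductive step I differentiate the defining series termwise: from $q\partial_q\big(\tfrac{q^{(b-a)k}}{(1-q^{k})^{b}}\big)=k\big[(b-a)\tfrac{q^{(b-a)k}}{(1-q^{k})^{b}}+b\,\tfrac{q^{(b-a+1)k}}{(1-q^{k})^{b+1}}\big]$, and since the factor $k$ is precisely what $D$ produces, one gets the exact relation
\[
q\partial_q\,g_{b,a}=D\big[(b-a)\,g_{b,a}-b\,g_{b+1,a}\big],\qquad\text{i.e.}\qquad g_{b+1,a}=\tfrac{b-a}{b}\,g_{b,a}-\tfrac1b\,D^{-1}\!\big(q\partial_q\,g_{b,a}\big)+\mathrm{const}.
\]
Moreover $q\partial_q$ preserves the ring and raises its grading by $2$: the heat equation for $\theta$ gives $q\partial_q(\theta^{(k)}/\theta)=\mathsf c\big(\theta^{(k+2)}/\theta-(\theta^{(k)}/\theta)(\theta''/\theta)\big)$ for a nonzero constant $\mathsf c$, and as $\C[\theta^{(k)}/\theta]$ is generated by these elements it is stable under the derivation $q\partial_q$. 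Hence, assuming inductively $g_{b,a}\in\C[\theta^{(k)}/\theta]$, the bracket above lies in the ring and $g_{b+1,a}$ is its $D$-antiderivative.

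The step I expect to be the main obstacle is showing that this $D^{-1}$ does not leave $\C[\theta^{(k)}/\theta]$, and identifying the top-degree part. Since $q\partial_q g_{b,a}$ has no constant Fourier mode, $D^{-1}(q\partial_q g_{b,a})$ is a well-defined single-valued Laurent series, meromorphic on $\C^{*}$ with poles on $\Lambda_\tau$ by Part (i); its additive constant is pinned by the known constant coefficient $(-1)^{a}(1/2)^{b+1}$ of $g_{b+1,a}$. To place it in the ring I would compare automorphy under $u\mapsto qu$: the defining series satisfies $g_{b,a}(qu)=g_{b,a}(u)+g_{b-1,a-1}(u)+(-1)^{a}(1/2)^{b-1}$ (for $a\ge1$, with $a=0$ recovered from the $a$-recursion), whose defect is a lower-$b$ function already in the ring; subtracting a ring element built from $\theta'/\theta$ and the elliptic generators $D(\theta'/\theta),D^{2}(\theta'/\theta),\dots$ with the same automorphy reduces $g_{b+1,a}$ to an elliptic function with poles on $\Lambda_\tau$, hence to a polynomial in those generators. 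The delicate point is the quasi-modular bookkeeping—keeping the elliptic remainder inside $\C[\theta^{(k)}/\theta]$ rather than the larger ring $\mathrm{QE}$. Finally, tracking the grading settles the last assertion: $q\partial_q$ raises it by $2$ and $D^{-1}$ lowers it by $1$, so each increment of $b$ raises the leading weight by one, fixing it inductively from $g_{1,a}=\theta'/\theta$ and showing the top-degree part of $g_{b+1,a}$ comes solely from the $D^{-1}(q\partial_q g_{b,a})$ term; because the leading part of $g_{b,a}$ is, by induction, independent of $a$—the $a$-dependent lower-weight terms cannot reach the top—so is that of $g_{b+1,a}$, while the summand $\tfrac{b-a}{b}g_{b,a}$ is of strictly lower weight and contributes nothing at the top. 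This proves that for fixed $b$ the leading part of $g_{b,a}$ is independent of $a$.
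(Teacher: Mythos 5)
Your proposal takes a genuinely different route from the paper's, and its core is sound. The paper (Appendix \ref{appendixEulerianquasielliptic}) argues explicitly and combinatorially: Proposition \ref{propAm} identifies the Eulerian sums $\mathcal{A}_{m+1}$ with Laurent coefficients of the Szeg\"o kernel, hence with complete Bell polynomials in the $\mathcal{E}_{k}^{*}$, and then Frobenius's formula plus Stirling inversion expresses $g_{b,a}$ (minus its constant) through the $\mathcal{A}_{\ell+1}$, with general $a$ reached by the substitution $u\mapsto q^{a-1}u$ and the automorphy \eqref{eqnthetaderivativeautomorphy}; all weight statements then fall out of the explicit Bell-polynomial formulas. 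You instead run an induction on $b$: your heat-equation recursion $q\partial_{q}g_{b,a}=D[(b-a)g_{b,a}-b\,g_{b+1,a}]$ and your automorphy identity $g_{b,a}(qu)=g_{b,a}(u)+g_{b-1,a-1}(u)+(-1)^{a}(1/2)^{b-1}$ are both correct (I verified them against Definition \ref{dfngbafunction}), and the Liouville step works: the multiplier is a lower-$b$ ring element, a ring element $h$ with the same multiplier exists because the backward difference $h(Z)\mapsto h(Z-1)-h(Z)$, $Z=\theta'/\theta$, is surjective on $\mathbb{C}[\wp,\wp'][Z]$, and what remains is elliptic with poles only on the lattice, hence a polynomial in $\wp,\wp'$. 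This delivers part (i) and the ring membership in part (ii) without ever mentioning Eulerian polynomials or the Szeg\"o kernel, which is a genuine simplification; the price is that the weight statements no longer come for free.

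That price is where the gap sits. The rule ``$q\partial_{q}$ raises the weight by $2$, $D^{-1}$ lowers it by $1$'' yields only \emph{upper} bounds; to pin the leading weight exactly and to propagate the $a$-independence of the leading part through $D^{-1}(q\partial_{q}g_{b,a})$ you must also show that the induced maps on top-weight parts do not vanish, which is not automatic ($D$ and $q\partial_{q}$ have nontrivial kernels, and $D^{-1}$ is only known a posteriori to stay in the ring). Note moreover that your count starts at weight $1$ for $g_{1,a}=\theta'/\theta$ and adds one per increment of $b$, so it produces leading weight $b$, not the claimed $b+1$; in fact $b$ is what the explicit examples give ($g_{2,a}$ is a constant minus $\theta''/(2\theta)+\mathbb{G}_{2}$, up to $\pm\,\theta'/\theta$, by Example \ref{exTn3case}) and what the application in Theorem \ref{thmsumoverpartitions} requires, so the off-by-one is an imprecision of the statement itself --- but your write-up asserts $b+1$ without noticing the tension, which indicates the bookkeeping was not actually carried out. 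Both defects are repairable inside your own framework: your two identities combine into the exact relation $g_{b,a-1}=g_{b,a}+g_{b-1,a-1}$, whose last term has strictly smaller leading weight by induction, so the top parts of $g_{b,a}$ and $g_{b,a-1}$ coincide, giving the $a$-independence at once; and in the Liouville step the top part of $g_{b,a}$ contains that of the discrete antiderivative $h$, which has $Z$-degree at least one and therefore cannot be cancelled by the elliptic remainder, pinning the leading weight exactly and bypassing the $q\partial_{q}$ recursion altogether.
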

\begin{proof}
  See  Appendix \ref{appendixEulerianquasielliptic}.
\end{proof}
\begin{ex}\label{eqnexampleg10g11}
	The simplest examples are (see \eqref{eqnanalyticcontinutationofZ}) 
	\[
	g_{1,0}=g_{1,1}={\theta'\over \theta}\,.
	\]
\end{ex}

In what follows, when we refer to $g_{b,a}(u)$ we mean its analytic continuation in the region $1<|u|<|q|^{-1}$ unless stated otherwise.

 \begin{lem}
 \label{lemv0sumoverpartition}
 Let the notation be as above and $n\geq 1$.
Then one has
\begin{eqnarray*}
   [v^{0}] \,\det \, Z_{n}
  &=&
   \sum_{\sigma \in \mathfrak{S}_{n}}(-1)^{\mathrm{sgn}(\sigma)}
\prod_{k}
g_{|\sigma_{k}|,\, \chi_{\sigma_{k}}}(\prod_{i\in \sigma_{k}} u_{i})\,\\
 &=& \sum_{\pi \in \Pi_{[n]}}(-1)^{\pi}
\prod_{k} {1\over |\pi_{k}|}
\sum_{\tau_{k}\in \mathfrak{S}(\pi_{k})}
g_{|\pi_{k}|,\, \chi_{\tau_{k}}}(\prod_{i\in \pi_{k}} u_{i})\,,
\end{eqnarray*}
   where $ \mathfrak{S}(\pi_{k})$ is the permutation group of the set $\pi_{k}$.
In particular,
one has
\begin{eqnarray*}
   [v^{0}] \,\det \, Z_{n}  &=&
   \sum_{\pi \in \Pi_{[n]}}(-1)^{\pi}
\prod_{k} {1\over |\pi_{k}|}
\sum_{\tau_{k}\in \mathfrak{S}(\pi_{k})}
g_{|\pi_{k}|,\, 0}(\prod_{i\in \pi_{k}} u_{i})\,\quad \mathrm{in}\quad \Omega_{\mathrm{Wick}}\,,\\
  &=&
   \sum_{\pi \in \Pi_{[n]}}(-1)^{\pi}
\prod_{k} {1\over |\pi_{k}|}
\sum_{\tau_{k}\in \mathfrak{S}(\pi_{k})}
g_{|\pi_{k}|,\, \mathrm{cdes}(\tau_k)}(\prod_{i\in \pi_{k}} u_{i})\,\quad \mathrm{in}\quad \Omega_{\mathrm{GW}}\,.
 \end{eqnarray*}

 \end{lem}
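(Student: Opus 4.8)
The plan is to expand $\det Z_{n}$ by the Leibniz formula and to extract the constant term cycle by cycle. Writing $w_{i}:=u_{i}v_{i}/v_{\sigma(i)}=x_{i}/x_{\overline{\sigma(i)}}$, one has
\[
\det Z_{n}=\sum_{\sigma\in\mathfrak{S}_{n}}(-1)^{\mathrm{sgn}(\sigma)}\prod_{i=1}^{n}\frac{\theta'}{\theta}(w_{i})\,.
\]
First I would observe that the $v$-variables entering the factors indexed by distinct cycles of $\sigma$ decouple: the total power of $v_{j}$ in a monomial $\prod_{i}w_{i}^{k_{i}}$ is $k_{j}-k_{\sigma^{-1}(j)}$, so extracting $[v^{0}]$ forces the Fourier index $k_{i}$ to be constant along each cycle of $\sigma$. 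Consequently $[v^{0}]$ factorizes as a product over the cycles $\sigma_{1},\cdots,\sigma_{\ell}$ of $\sigma$, and within a single cycle the surviving arguments telescope, $\prod_{i\in\sigma_{k}}w_{i}=\prod_{i\in\sigma_{k}}u_{i}$.

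Next I would carry out the single-cycle computation, which is the technical heart. For a cycle $\sigma_{k}$ of length $b=|\sigma_{k}|$, with all Fourier indices equal to a common $k$, the coefficient is $\prod_{i\in\sigma_{k}}c_{k}^{(i)}$, where $c_{k}^{(i)}$ is the degree-$k$ Fourier coefficient of $\frac{\theta'}{\theta}(w_{i})$ in the annulus dictated by the ordering $\Omega$. By \eqref{eqnanalyticcontinutationofZ}, the relevant expansion is the one with $|w_{i}|<1$ precisely when $\chi_{i\overline{\sigma(i)}}=1$ and the one with $|w_{i}|>1$ precisely when $\chi_{i\overline{\sigma(i)}}=0$; in the former case $c_{0}^{(i)}=-\tfrac12$ and $c_{k}^{(i)}=-\tfrac1{1-q^{k}}$, in the latter $c_{0}^{(i)}=\tfrac12$ and $c_{k}^{(i)}=-\tfrac{q^{k}}{1-q^{k}}$. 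Setting $a:=\chi_{\sigma_{k}}$, the cyclic $\chi$-sum of Definition \ref{dfnchiK}, so that $a$ factors carry $\chi=1$ and $b-a$ factors carry $\chi=0$, multiplying these $b$ coefficients and summing over $k\in\mathbb{Z}$ yields
\[
[v^{0}]\prod_{i\in\sigma_{k}}\frac{\theta'}{\theta}(w_{i})=(-1)^{a}(\tfrac12)^{b}+(-1)^{b}\sum_{k\neq0}\frac{q^{(b-a)k}}{(1-q^{k})^{b}}\Big(\prod_{i\in\sigma_{k}}u_{i}\Big)^{k}=g_{b,a}\Big(\prod_{i\in\sigma_{k}}u_{i}\Big)\,,
\]
by the very definition \eqref{eqngbafunction}; the identification of this extracted series with the analytic continuation $g_{b,a}$ used throughout follows from Theorem \ref{thmquasiellipticitygba} and the stated convention. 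I expect the bookkeeping of the exponent $q^{(b-a)k}$ and the sign $(-1)^{b}$ against \eqref{eqngbafunction}, together with the verification that the annuli prescribed by $\Omega$ are mutually compatible so that all expansions converge on one cycle $\gamma_{n}^{\Omega}$, to be the main obstacle. Collecting the cycles and using $(-1)^{\mathrm{sgn}(\sigma)}=(-1)^{n-\ell}$ gives the first displayed formula.

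Finally I would regroup the sum over $\mathfrak{S}_{n}$ into a sum over partitions. A permutation is the datum of its support partition $\pi=\{\pi_{1},\cdots,\pi_{\ell}\}$ together with a cyclic ordering of each block; representing a cyclic ordering of $\pi_{k}$ by the $|\pi_{k}|$ linear sequences (equivalently the $|\pi_{k}|$ permutations $\tau_{k}\in\mathfrak{S}(\pi_{k})$) obtained from its rotations, and using that $\chi_{\tau_{k}}$ is invariant under cyclic rotation, one rewrites $\sum_{\text{cyclic orderings of }\pi_{k}}$ as $\tfrac1{|\pi_{k}|}\sum_{\tau_{k}\in\mathfrak{S}(\pi_{k})}$, together with $(-1)^{\mathrm{sgn}(\sigma)}=(-1)^{\pi}$; this yields the second displayed formula. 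The two specializations are then immediate: in $\Omega_{\mathrm{Wick}}$ one has $\chi_{i\bar{j}}\equiv0$ by \eqref{eqndomainoforderedAcycleWickGWintermsofchi}, hence $\chi_{\tau_{k}}=0$ and $g_{|\pi_{k}|,0}$; in $\Omega_{\mathrm{GW}}$ one has $\chi_{i\bar{j}}=1$ exactly when $i>j$, so the cyclic $\chi$-sum $\chi_{\tau_{k}}$ counts precisely the cyclic descents of $\tau_{k}$, giving $\chi_{\tau_{k}}=\mathrm{cdes}(\tau_{k})$ in the sense of Definition \ref{dfnstatisticsinpermutations}. This last identification is the only remaining combinatorial point and amounts to matching the convention of Definition \ref{dfnchiK} with that of $\mathrm{cDes}$.
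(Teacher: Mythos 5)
Your proposal is correct and takes essentially the same route as the paper's proof: a Leibniz expansion of $\det Z_{n}$, extraction of $[v^{0}]$ via the two Fourier expansions in \eqref{eqnanalyticcontinutationofZ}, and the same regrouping of permutations into partitions with the $1/|\pi_{k}|$ factor from cyclic rotations, followed by the identical evaluation of $\chi_{\tau_{k}}$ in $\Omega_{\mathrm{Wick}}$ and $\Omega_{\mathrm{GW}}$. Your cycle-by-cycle computation (the Fourier index forced constant along each cycle of $\sigma$, the telescoping $\prod_{i\in\sigma_{k}}w_{i}=\prod_{i\in\sigma_{k}}u_{i}$, and the matching of the resulting coefficients with Definition \ref{dfngbafunction}) is precisely the ``straightforward computations'' the paper leaves implicit, so it supplies detail rather than a different method.
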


\begin{proof}
By definition, one has
\begin{eqnarray*}
 \det \, Z_{n}
 &=&\sum_{\sigma \in \mathfrak{S}_{n}}(-1)^{\mathrm{sgn}(\sigma)}  (Z_{n})_{1,\sigma(1)}\cdots (Z_{n})_{n,\sigma(n)}\,.
  \end{eqnarray*}
 We then expand ${\theta'\over \theta} ({u_{i}v_{i}\over v_{j}})$ in the region $\Omega$.
 Using \eqref{eqnanalyticcontinutationofZ},
it follows from straightforward computations that
\[
   [ v^{0}] \,\det \, Z_{n}
  =
   \sum_{\sigma \in \mathfrak{S}_{n}}(-1)^{\mathrm{sgn}(\sigma)}
\prod_{k}
g_{|\sigma_{k}|,\, \chi_{\sigma_{k}}}(\prod_{i\in \sigma_{k}} u_{i})\,.
\]
For the partitions $\pi=\{\pi_1,\cdots,\pi_{\ell}\}$ in $ \Pi_{n}$,
by considering the cycles obtained by permuting elements within the blocks $\pi_{k},k=1,2,\cdots,\ell$ one recovers the set $\mathfrak{S}_{n}$:
for the
 block $\pi_{k}=\{b_1,\cdots, b_{m}\}$, a permutation $\tau_k\in\mathfrak{S}(\pi_{k})$
 gives the $k$th cycle
$\sigma_{k}=(\tau_k(b_1)\cdots \tau_k(b_{m}))$.
Since the elements $b_1,\cdots ,b_{m}$ are distinct, one has $
\chi_{\sigma_{k}}=\chi_{\tau_{k}}
$.
The contribution corresponding to the block $\pi_{k}=\{b_1,\cdots, b_{m}\}$ is then
\[
{1\over |\pi_{k}|}\sum_{\tau_{k}\in \mathfrak{S}(\pi_{k})} g_{|\pi_{k}|,\, \chi_{\tau_k}}(\prod_{i\in \pi_{k}} u_{i})\,.
\]
The factor ${1/ |\pi_{k}|}$ arises from the fact that different permutations $\tau\in\mathfrak{S}(\pi_{k})$ that are related by cyclic ones give rise to the same cycle
$(\tau(b_1)\cdots \tau(b_{m}))$.

For the region $\Omega_{\mathrm{GW}}$,  $\chi^{\mathrm{GW}}_{i\bar{j}}=1,i\neq j$ if and only if $i>j$,
thus $\chi_{\tau_{k}}=\mathrm{cdes}(\tau_{k})$.

For the region $\Omega_{\mathrm{Wick}}$, $\chi^{\mathrm{Wick}}_{i\bar{j}}=0$ for any $i\neq j$,
thus $\chi_{\tau_{k}}=0$. The result then follows.
  \end{proof}

\begin{prop}\label{propGWpairing=bindpairing}
Let the notation be as above. 
\begin{enumerate}[i).]
	\item $T_{n}^{\Omega}$ depends only on the set of values $\chi_{i\bar{j}}$ for $i\neq j$.
	\item 
	For the cycle $\gamma_{n}^{\mathrm{GW}}$ in Definition \ref{dfnrelativeGWhomologyclass},
	the corresponding pairing $ \langle\gamma_{n}^{\mathrm{GW}}\,, ~ \varphi_{n}^{\mathrm{GW}} \rangle$ coincides with (the analytic continuation of)
	$ \langle \gamma_{n}^{\Omega_{\mathrm{GW}}}\,, ~ \varphi_{n}^{\mathrm{GW}} \rangle$.
	\item 
One has
 \begin{equation}\label{eqnbound=GW}
 \Big \langle ~\gamma_{n}^{\Omega_{\mathrm{bound}}}\,, ~ \varphi_{n}^{\mathrm{GW}} ~\Big\rangle=
\Big \langle ~\gamma_{n}^{\Omega_{\mathrm{GW}}}\,, ~ \varphi_{n}^{\mathrm{GW}} ~\Big\rangle\,.
 \end{equation}

\end{enumerate}
\end{prop}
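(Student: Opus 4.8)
The three parts rest on the explicit evaluations in Lemma \ref{lemv0termofFay} and Lemma \ref{lemv0sumoverpartition}, so the plan is to establish i), deduce ii) from it, and then treat iii). For part i) I would simply inspect the two structural formulas. By Lemma \ref{lemv0termofFay} one has $T_n^{\Omega}=\sum_{\emptyset\subsetneq J\subseteq[n]}c_J\,[v^0]\det Z_{[n]\setminus J}$, and I would argue that every ingredient on the right depends only on the values $\chi_{i\bar j}$ with $i\neq j$: indeed \eqref{eqncJintermsofsumoversequences} writes $c_J$ purely through the $\chi_{i\bar j}$ attached to cyclic sequences in $J$, while Lemma \ref{lemv0sumoverpartition} writes $[v^0]\det Z_{[n]\setminus J}$ as a sum of products of $g_{|\sigma_k|,\chi_{\sigma_k}}$, whose only ordering input is $\chi_{\sigma_k}=\sum_a\chi_{k_{a+1}\overline{k_a}}$. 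The diagonal entries $\theta'/\theta(u_i)$ carry no $v$-dependence and, by Example \ref{eqnexampleg10g11}, $g_{1,0}=g_{1,1}$, so $\chi_{i\bar i}$ is irrelevant, and the orderings $\chi_{ij},\chi_{\bar i\bar j},\chi_{\bar i j}$ never enter the $v^0$-extraction. This yields i).

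For part ii) I would first read off the ordering attached to $\gamma_n^{\mathrm{GW}}$ from Definition \ref{dfnrelativeGWhomologyclass}: the constraints $|u_iv_i/v_j|>1$ and $|v_i/(u_jv_j)|>1$ for $i<j$ say precisely that $\chi_{i\bar j}=0$ for $i<j$ and $\chi_{i\bar j}=1$ for $i>j$, which is exactly the function $\chi^{\mathrm{GW}}$ of \eqref{eqndomainoforderedAcycleWickGWintermsofchi}. By part i) together with \eqref{eqnpairingisintegration}, the pairing then equals $T_n^{\Omega_{\mathrm{GW}}}=\langle\gamma_n^{\Omega_{\mathrm{GW}}},\varphi_n^{\mathrm{GW}}\rangle$ as functions of $u$ on the region $1<|u_i|<|q|^{-1}$ in which the $g_{b,a}$ are normalized. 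The one point to address is that the torus $\Gamma_n^{\mathrm{GW}}$ is a priori defined over the open locus of the cover where its defining inequalities are simultaneously solvable, which need not coincide with the chamber cut out by $\Omega_{\mathrm{GW}}$; since both pairings are restrictions of one and the same meromorphic function of $u$ (the integrand is fixed and only the chamber of radii differs), they agree on the overlap and hence after analytic continuation, which is the content of the parenthetical remark.

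Part iii) is the substantial one. By part i) it suffices to show that $T_n^{\Omega}$ takes the same value for every realizable ordering satisfying the bound condition \eqref{eqnboundordering}. Such orderings correspond to linear orders of the $n$ bound blocks $(u_iv_i,v_i)$, so any two of them are joined by a chain of adjacent transpositions, each swapping the relative position of two blocks $a,b$ and thereby flipping simultaneously $\chi_{a\bar b}$ and $\chi_{b\bar a}$ while leaving all other $\chi_{i\bar j}$ fixed. I would therefore reduce to a single such swap, between orderings $\Omega$ and $\Omega'$, and compute $T_n^{\Omega}-T_n^{\Omega'}$ as a sum of residues. Using the contour representation \eqref{eqnTnOmega}, deforming the $v_a,v_b$ circles from one chamber to the other crosses exactly the two walls $\{v_b=u_av_a\}$ and $\{v_a=u_bv_b\}$, that is the loci $P_a=Q_b$ and $P_b=Q_a$, so the difference is accounted for by the corresponding two iterated residues.

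The key step, and the one I expect to be the main obstacle, is to show that these two residues cancel. Here the plan is to invoke Fay's identity $C_{2n}=\det(S_c(P_i,Q_j))$ from \eqref{eqnFaymulti-secant}: since $Q_b$ occurs only in the $b$-th column of $(S_c(P_i,Q_j))$ and $S_c$ has residue $1$ along its diagonal, the residue of the integrand at $P_a=Q_b$ collapses the determinant to the signed cofactor $(-1)^{a+b}$ times the $(2n-2)$-point function on the points with $P_a,Q_b$ deleted, and symmetrically the residue at $P_b=Q_a$ produces the $(2n-2)$-point function on the points with $P_b,Q_a$ deleted. The relabelling $a\leftrightarrow b$ carries one reduced configuration to the other, so the two residues agree as $(2n-2)$-dimensional ordered $A$-cycle integrals; they enter the difference weighted by $(-1)^{\chi_{a\bar b}}$ and $(-1)^{\chi_{b\bar a}}$ respectively, coming from the jumps of the $v^0$-coefficients in \eqref{eqnanalyticcontinutationofZ}, and the bound condition \eqref{eqnboundordering} makes these two weights opposite. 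Hence they cancel and $T_n^{\Omega}=T_n^{\Omega'}$, which propagates along the chain of transpositions to give \eqref{eqnbound=GW}; the $n=2$ instance, where the cross terms $-\tfrac12(-1)^{\chi_{1\bar 2}}-\tfrac12(-1)^{\chi_{2\bar 1}}$ in Example \ref{exTnn=12cases} sum to zero under the bound condition, is exactly this cancellation in the base case. The delicate points I anticipate are the exact bookkeeping of the signs from the cofactor, from the prefactor $\theta(\prod_k u_k)$, and from the residue orientations, together with verifying that the deformation can be routed so as to cross no wall other than the two intended ones, for which I would use the freedom in the $u_i$ and part i) to place the radii conveniently.
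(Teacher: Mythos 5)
Your parts i) and ii) are essentially the paper's own proof: i) is read off from \eqref{eqncJintermsofsumoversequences}, Lemma \ref{lemv0termofFay} and Lemma \ref{lemv0sumoverpartition}, with $g_{1,0}=g_{1,1}$ (Example \ref{eqnexampleg10g11}) disposing of $\chi_{i\bar{i}}$, and ii) is the observation that $\gamma_{n}^{\mathrm{GW}}$ realizes the pattern $\chi^{\mathrm{GW}}$ of \eqref{eqndomainoforderedAcycleWickGWintermsofchi} combined with i). For iii), however, you diverge sharply from the paper. The paper's proof is a one-paragraph relabeling argument: $\theta(\prod_{k}u_{k})\,\Theta_{2n}$ is invariant under the $\mathfrak{S}_{n}$-action permuting the bound pairs $(u_{i}v_{i},v_{i})$, and any realizable bound ordering is carried by such a relabeling to one whose pattern $\{\chi_{i\bar{j}}\}$ is exactly $\chi^{\mathrm{GW}}$; iii) then follows from i), concretely because in Lemma \ref{lemv0sumoverpartition} each block sum $G_{\pi_{k}}=\sum_{\tau_{k}}g_{|\pi_{k}|,\chi_{\tau_{k}}}(u_{\pi_{k}})$ depends, for a bound pattern, only on the multiset of the $\chi_{\tau_{k}}$ over all $\tau_{k}\in\mathfrak{S}(\pi_{k})$, which is invariant under relabeling the underlying total order, while the argument $u_{\pi_{k}}$ is a symmetric function. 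You miss this short route and instead propose a wall-crossing argument.

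Your wall-crossing plan (reduce to an adjacent swap of two bound blocks, write the difference as the two residues at $P_{a}=Q_{b}$ and $P_{b}=Q_{a}$, show they cancel) is viable in outline and identifies the correct walls, but the cancellation mechanism you give is a genuine gap. The residues of the fixed meromorphic integrand cannot be ``weighted by $(-1)^{\chi_{a\bar{b}}}$ and $(-1)^{\chi_{b\bar{a}}}$'': the $\chi$'s label chambers, not the integrand, and the wall-crossing contributions are already the full residues with no extra chamber-dependent weights. Nor do the orientations differ: as block $a$ slides past block $b$, both ratios $|u_{a}v_{a}/v_{b}|$ and $|v_{a}/(u_{b}v_{b})|$ pass from $>1$ to $<1$, so both crossings carry the same orientation factor; moreover both Fay cofactors from \eqref{eqnFaymulti-secant} carry the same sign $(-1)^{a+b}$, and bringing both minors to standard paired form costs the same rearrangement sign. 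Run your bookkeeping as written and the two contributions appear to add, not cancel. The actual source of the relative minus sign is the Jacobian of the residue in the deformed variable: the first wall is the zero locus of $P_{a}-Q_{b}=z_{u_{a}}+z_{v_{a}}-z_{v_{b}}$, with derivative $+1$ in $z_{v_{a}}$, while the second is the zero locus of $P_{b}-Q_{a}=z_{u_{b}}+z_{v_{b}}-z_{v_{a}}$, with derivative $-1$; equivalently, contracting $\psi(P_{a})\psi^{*}(Q_{b})$ versus contracting $\psi^{*}(Q_{a})\psi(P_{b})$ produces the reduced $(2n-2)$-point function with opposite signs by fermionic antisymmetry. (The entry-wise jump heuristic you cite from \eqref{eqnanalyticcontinutationofZ} is also not valid here, since $[v^{0}]$ of the determinant is extracted cycle by cycle, not entry by entry, as Lemma \ref{lemv0sumoverpartition} shows.) With the sign traced to its correct source, and with the routine check that the merged block $u_{a}u_{b}$ lands in the same reduced chamber for both residues, your argument does close; but as stated the central step rests on an incorrect sign analysis, which is precisely the point you flagged as the anticipated obstacle.
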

\begin{proof}
	\begin{enumerate}[i).]
		\item
		By \eqref{eqncJintermsofsumoversequences}, $c_{J}$ in \eqref{eqnv0termofFay} of Lemma \ref{lemv0termofFay} only depends on the  set of values $\chi_{i\bar{j}}$ for $i\neq j$.
This is also true
for the quantities  $\chi_{\tau_k}$ and thus
 $g_{|\pi_{k}|,\, \chi_{\tau_k}}$ in Lemma \ref{lemv0sumoverpartition}
 when $|\pi_{k}|>1$.
 In the case $|\pi_{k}|=1$, say
 $\pi_{k}=\{i\}$, one has
 $g_{|\pi_{k}|,\, \chi_{\tau_k}}={\theta'\over \theta}(u_i)$.
This is also independent of $\chi_{i\bar{i}}$ in the sense of analytic continuation.
Combining these, one proves (i).

\item 
The cycle $\gamma_{n}^{\mathrm{GW}}$ satisfies $\chi_{i\bar{j}}=1,i\neq j$ if and only if $i>j$.
The disired claim then follows from  (i).

\item

The quantity $
\theta(\prod_{k} u_{k})
\cdot
\Theta_{2n}$ is invariant under the  following action induced by transposition in the symmetry group $\mathfrak{S}_{n}$
\begin{equation}\label{eqnactiononboundvariables}
(u_i v_i, v_i, u_j v_j, v_j)\mapsto 
(u_j v_j, v_j, u_i v_i, v_i)\,,
\end{equation}
and thus by the full $\mathfrak{S}_{n}$ action on the bound pairs $(i,\bar{i})$.
Up to such a permutation, one can always assume that the characteristic function $\chi$ for 
$\Omega_{\mathrm{bound}}$, which satisfies  \eqref{eqnboundordering},
satisfy the condition that  $\chi_{i\bar{j}}=1,i\neq j$ if and only if $i>j$.
The equality \eqref{eqnbound=GW} then follows from (i).
\end{enumerate}
\end{proof}

\begin{dfn}\label{dfnGpi}
Consider the extended set $\widetilde{[n]}:=[n]\cup\{0\}$ and the extended
characteristic function of $\chi_{ab}$ satisfying
\[
{\chi}_{0\bar{i}}=0\,,\quad {\chi}_{i\bar{0}}=1\,,\quad \forall\, i\in [n]\,,
\]
with the extra variables $u_0 v_0, v_0$ included.
For $\pi_{k}\subseteq [n]$, we define
\[
G_{\pi_k}=
\sum_{\tau_{k}\in \mathfrak{S}(\pi_{k})}g_{|\pi_{k}|,\, \chi_{\tau_{k}}}(\prod_{i\in \pi_{k}} u_{i})\,.
\]
For any partition $\pi=\{\pi_{1},\cdots, \pi_{\ell}\}$, denote
\[
{1\over |\pi|}G_{\pi}=\prod_{k}{1\over |\pi_{k}|}G_{\pi_{k}}\,.
\]
\end{dfn}
From  \eqref{eqnanalyticcontinutationofZ}, Definition \ref{dfngbafunction} and Example \ref{exTnn=12cases}, we have
\begin{equation}\label{eqnG1}
G_{\{1\}}^{\mathrm{Wick}}=G_{\{1\}}^{\mathrm{GW}}=g_{1,0}={\theta'\over \theta}\,,\quad
T_{1}^{\Omega}=1\,.
\end{equation}

\begin{conv}\label{conventionG0T0}
We use the following convention
\[
(-1)^{\pi_k}
{1\over |\pi_k|}
G_{\pi_k}=1\,,~
T_{\pi_k}^{\Omega}=0\,
~ \text{for}~ \pi_k=\emptyset\subseteq [n]\,,
\quad
(-1)^{S}
{1\over |S|}
G_{S}=0\,~
\mathrm{for}~ S=\{0\}\,.
\]
\end{conv}

\begin{thm}\label{thmsumoverpartitions}
Let the notation be as above and $n\geq 1$.
Then under Convention \ref{conventionG0T0}, one has
\begin{equation} \label{eqnsumoverpartitions}
T_{n}^{\Omega}=
[v^0]\left(
\theta(\prod_{k} u_{k})
\cdot
\Theta_{2n}\right)=2^{2}\cdot [u_0^0]
\left(
\sum_{{\pi}\in \Pi^{*}_{\widetilde{[n]}}}{ (-1)^{{\pi}} \over |{\pi}|} {G}_{{\pi}}\right)
\,,
\end{equation}
where $\Pi^{*}_{\widetilde{[n]}}$ represents
the set of partitions of $\widetilde{[n]}$ in which the singlet $\{0\}$
is not a block.
In particular, $T_{n}^{\Omega}$
is a quasi-elliptic function of mixed weight with leading weight $n-1$,
whose leading part is independent of the ordering.

\end{thm}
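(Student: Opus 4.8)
The plan is to read the right-hand side of \eqref{eqnsumoverpartitions} as a repackaging of the two-step expansion already in hand, namely Lemma~\ref{lemv0termofFay} followed by Lemma~\ref{lemv0sumoverpartition}. First I would record the bijection between $\Pi^{*}_{\widetilde{[n]}}$ and pairs $(J,\pi')$ with $\emptyset\subsetneq J\subseteq[n]$ and $\pi'\in\Pi_{[n]\setminus J}$: to a partition $\pi$ of $\widetilde{[n]}$ whose block containing $0$ is $\{0\}\cup J$ one associates $J$ together with the induced partition $\pi'$ of the remaining points (the condition that $\{0\}$ is not a block is exactly $J\neq\emptyset$). Under this bijection the $0$-block decouples from the rest, and since $(-1)^{\pi}=(-1)^{|J|}(-1)^{\pi'}$, $\tfrac{1}{|\pi|}=\tfrac{1}{|J|+1}\tfrac{1}{|\pi'|}$ and $G_{\pi}=G_{\{0\}\cup J}\,G_{\pi'}$, the summand factorizes as
\[
\frac{(-1)^{\pi}}{|\pi|}G_{\pi}=\left((-1)^{|J|}\frac{1}{|J|+1}G_{\{0\}\cup J}\right)\cdot\left((-1)^{\pi'}\frac{1}{|\pi'|}G_{\pi'}\right).
\]
Applying $[u_0^0]$ (which only affects the first factor, as the second is independent of $u_0$) and summing, and recognizing the second factor as $[v^0]\det Z_{[n]\setminus J}$ via Lemma~\ref{lemv0sumoverpartition} and Definition~\ref{dfnGpi}, the right-hand side of \eqref{eqnsumoverpartitions} becomes $\sum_{\emptyset\subsetneq J\subseteq[n]}\big(2^2(-1)^{|J|}[u_0^0]\tfrac{1}{|J|+1}G_{\{0\}\cup J}\big)\,[v^0]\det Z_{[n]\setminus J}$. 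Comparing with \eqref{eqnv0termofFay}, the whole theorem reduces to the single scalar identity
\[
c_{J}=2^2\,(-1)^{|J|}\,[u_0^0]\frac{1}{|J|+1}G_{\{0\}\cup J}.
\]

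The second and main step is to prove this scalar identity, where I expect the real work to lie. The key input is the elementary constant-term evaluation $[u_0^0]\,g_{b,a}(u_0 w)=(-1)^{a}2^{-b}$, read off directly from Definition~\ref{dfngbafunction} because only the $k=0$ term survives extraction of the $u_0^0$-coefficient. Hence $[u_0^0]G_{\{0\}\cup J}=2^{-(|J|+1)}\sum_{\tau}(-1)^{\chi_{\tau}}$, with $\tau$ running over $\mathfrak{S}(\{0\}\cup J)$ and the extended characteristic function $\chi_{0\bar i}=0,\ \chi_{i\bar 0}=1$ of Definition~\ref{dfnGpi}. Dividing by $|J|+1$ collapses the sum over permutations into a sum over the $(|J|+1)$-cycles through $0$, i.e. over orderings $(j_1,\dots,j_{|J|})$ of $J$; each such cycle has sign $(-1)^{|J|}$, two boundary edges $0\to j_1$ and $j_{|J|}\to 0$ carrying $\chi_{0\bar{j_1}}=0$ and $\chi_{j_{|J|}\bar 0}=1$, and $|J|-1$ interior edges carrying the $\chi_{j_a\bar{j_{a+1}}}$. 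The factor $2^2$ is exactly what compensates the two boundary edges: in the bordered determinant \eqref{eqnFrobeniussymmetricform} those entries are the genuine constants $\pm1=\pm 2\cdot\tfrac12$, whereas the $g$-expansion assigns each edge the value $\tfrac12(-1)^{\chi}$. Carrying out this accounting turns $2^2(-1)^{|J|}[u_0^0]\tfrac{1}{|J|+1}G_{\{0\}\cup J}$ into $(-\tfrac12)^{|J|-1}$ times a sum over orderings of $J$ of a product of $|J|-1$ edge-signs along a Hamiltonian path, which is precisely the expression \eqref{eqncJintermsofsumoversequences} for $c_{J}$ after reindexing the orderings. The delicate part, and the main obstacle, is this sign-and-boundary bookkeeping together with the reindexing that identifies the two Hamiltonian-path sums.

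With the scalar identity established, the first displayed reduction combined with Lemma~\ref{lemv0termofFay} yields \eqref{eqnsumoverpartitions}; the edge cases (a singleton $\{0\}$-block, empty blocks) are absorbed by Convention~\ref{conventionG0T0}, and the base case $n=1$ is the check $T_1^{\Omega}=1$ already recorded in \eqref{eqnG1}. For the final assertion I would track weights through the factorization. By Theorem~\ref{thmquasiellipticitygba} each $g_{b,a}$ is quasi-elliptic with leading part independent of $a$, and (with $\tfrac{\theta'}{\theta}$ of weight one, cf.\ Example~\ref{exTnn=12cases}) a non-$0$ block of size $s$ contributes a $G$ of leading weight $s$, while the $0$-block contributes only a scalar after $[u_0^0]$. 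Thus the summand indexed by $(J,\pi')$ has leading weight $\sum_k|\pi_k|=n-|J|$, which is maximal, equal to $n-1$, exactly when $|J|=1$. Finally, for $|J|=1$ one computes $c_{\{j\}}=1$, independent of the ordering, and the leading parts of the remaining $G_{\pi_k}$ are ordering-independent by Theorem~\ref{thmquasiellipticitygba}; hence the weight-$(n-1)$ part of $T_n^{\Omega}$, namely $\sum_{j\in[n]}$ of the leading part of $[v^0]\det Z_{[n]\setminus\{j\}}$, is independent of $\Omega$. This proves both that $T_n^{\Omega}$ has leading weight $n-1$ and that its leading part does not depend on the ordering.
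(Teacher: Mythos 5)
Your proposal is correct and takes essentially the same approach as the paper: the paper likewise combines Lemma \ref{lemv0termofFay} with Lemma \ref{lemv0sumoverpartition} to obtain \eqref{eqnmixedweightstructure}, reduces everything to the same scalar identity \eqref{eqncJformula} (your $c_{J}=2^{2}(-1)^{|J|}[u_0^0]\tfrac{1}{|J|+1}G_{\{0\}\cup J}$, via the same bijection between $\Pi^{*}_{\widetilde{[n]}}$ and pairs $(J,\pi')$), and deduces the quasi-ellipticity claim from Theorem \ref{thmquasiellipticitygba}. Your detailed verification of that identity --- the evaluation $[u_0^0]\,g_{b,a}=(-1)^{a}2^{-b}$, the collapse of $\tfrac{1}{|J|+1}G_{\{0\}\cup J}$ to cycles through $0$, and the factor $2^{2}$ compensating the two boundary entries $\pm 1$ of \eqref{eqnFrobeniussymmetricform} --- is precisely what the paper compresses into ``repeating the reasoning in the proof of Lemma \ref{lemv0sumoverpartition}'' together with the remark that the diagonal entries of $[v^0]\mathcal{Z}_{J}$ are irrelevant, so you have simply made the paper's argument explicit.
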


\begin{proof}
For $n\geq 2$,  combining
Lemma \ref{lemv0termofFay} and Lemma \ref{lemv0sumoverpartition}, we have
\begin{equation}\label{eqnmixedweightstructure}
[v^0]\left(
\theta(\prod_{k} u_{k})
\cdot
\Theta_{2n}\right)=\sum_{J:\,\emptyset \subsetneq J\subseteq [n]}
c_{J}
\sum_{\pi\in \Pi_{[n]\setminus J}}{ (-1)^{\pi} \over |\pi|} G_{\pi}
\,.
\end{equation}
This is also true for the $n=1$ case according to the definition of $c_{J}$ in Definition \ref{dfncJ} and Convention \ref{conventionG0T0}.
Observe that in the evaluation of $c_{J}$, the diagonal entries in
$[v^{0}]\mathcal{Z}_{J}$ are irrelevant.
Repeating the reasoning in the proof of  Lemma \ref{lemv0sumoverpartition}, we have
\begin{equation}\label{eqncJformula}
c_{J}=2^2 \cdot [u_0^0] \left({(-1)^{J}\over |J|+1} G_{J\cup\{0\}}\right)\,,\quad J\neq \emptyset\,.
\end{equation}
Combining these, we obtain  \eqref{eqnsumoverpartitions}.
The claim on quasi-ellipticity follows form Theorem \ref{thmquasiellipticitygba}.

\end{proof}

We can and shall replace the range $\Pi^{*}_{\widetilde{[n]}}$  by $\Pi_{\widetilde{[n]}}$  in Theorem \ref{thmsumoverpartitions} using Convention \ref{conventionG0T0}.
Note that within the blocks of the partition $\pi$ in \eqref{eqnsumoverpartitions}, the operation $[u_0^0]$ has effect  only
on the block that contains $0$.

\begin{rem}\label{remsumoverpartitionsassutraction}
The reasoning in deriving \eqref{eqncJformula} shows that if one
applies Cramer's rule to compute $\det Z_{[n]\cup \{0\}}$, then as in
 Lemma \ref{lemv0sumoverpartition} one has
\[
T_{n}^{\Omega}=2^2 \cdot [u_0^{0}][v^0] \left( \det Z_{[n]\cup \{0\}}-{\theta'\over \theta}(u_0) \det Z_{[n]} \right)\,.
\]
This explains the combinatorial structure for $\Pi^{*}_{\widetilde{[n]}}$ in \eqref{eqnsumoverpartitions}.
\end{rem}

Theorem \ref{thmsumoverpartitions}
exhibits the mixed weight structure of $T_{n}^{\Omega}$: each summand in
\eqref{eqnmixedweightstructure} has leading weight $[n]-|J|$.
We demonstrate this by considering the following example.

  \begin{ex}\label{exTn3case}
 Consider the $n=3$ case similar to Example \ref{exTnn=12cases}.
 One has from Lemma  \ref{lemuseFayforAcycleintegral} that
\begin{eqnarray*}
&&\theta(\prod_{k} u_{k})
\cdot
\Theta_{2n}\\
&=&\det\,
\begin{pmatrix}
{\theta'\over \theta}({u_{1}v_{1}\over v_1}) & {\theta'\over \theta}({u_{1}v_{1}\over v_2}) & 1\\
{\theta'\over \theta}({u_{2}v_{2}\over v_1}) &  {\theta'\over \theta}({u_{2}v_{2}\over v_2}) & 1\\
{\theta'\over \theta}({u_{3}v_{3}\over v_1}) &  {\theta'\over \theta}({u_{3}v_{3}\over v_2}) & 1
\end{pmatrix}
+
\det\,
\begin{pmatrix}
{\theta'\over \theta}({u_{1}v_{1}\over v_1})  & 1& {\theta'\over \theta}({u_{1}v_{1}\over v_3})\\
{\theta'\over \theta}({u_{2}v_{2}\over v_1}) & 1 &  {\theta'\over \theta}({u_{2}v_{2}\over v_3}) \\
{\theta'\over \theta}({u_{3}v_{3}\over v_1})  & 1&  {\theta'\over \theta}({u_{3}v_{3}\over v_3})
\end{pmatrix}
+\det\,
\begin{pmatrix}
1& {\theta'\over \theta}({u_{1}v_{1}\over v_2}) & {\theta'\over \theta}({u_{1}v_{1}\over v_3}) \\
1& {\theta'\over \theta}({u_{2}v_{2}\over v_2}) &  {\theta'\over \theta}({u_{2}v_{2}\over v_3}) \\
1& {\theta'\over \theta}({u_{3}v_{3}\over v_2}) &  {\theta'\over \theta}({u_{3}v_{3}\over v_3})
\end{pmatrix}\,.
\end{eqnarray*}
Direct computations give the result in  Lemma \ref{lemv0termofFay}
\begin{eqnarray*}
[v^0]\begin{pmatrix}
{\theta'\over \theta}({u_{1}v_{1}\over v_1}) & {\theta'\over \theta}({u_{1}v_{1}\over v_2}) & 1\\
{\theta'\over \theta}({u_{2}v_{2}\over v_1}) &  {\theta'\over \theta}({u_{2}v_{2}\over v_2}) & 1\\
{\theta'\over \theta}({u_{3}v_{3}\over v_1}) &  {\theta'\over \theta}({u_{3}v_{3}\over v_2}) & 1
\end{pmatrix}
&=&{\theta'\over \theta}(u_1)  {\theta'\over \theta}(u_2)-g_{2,\chi_{1\bar{2}}+\chi_{2\bar{1}}}(u_1 u_2)\\
&-&{1\over 2}(-1)^{\chi_{3\bar{2}}}{\theta'\over \theta}(u_1)-{1\over 2}(-1)^{\chi_{3\bar{1}}}{\theta'\over \theta}(u_2)\\
&
+&({1\over 2})^{2}(-1)^{\chi_{1\bar{2}}+\chi_{3\bar{1}}}+({1\over 2})^{2}(-1)^{\chi_{2\bar{1}}+\chi_{3\bar{2}}}\,.
\end{eqnarray*}
This leads to  (using cyclic labeling on the sub-indices)
\begin{eqnarray*}
T_{n}^{\Omega}=
[v^0]
\left(\theta(\prod_{k} u_{k})
\cdot
\Theta_{2n}\right)&=&
\sum_{i<j}({\theta'\over \theta}(u_i)  {\theta'\over \theta}(u_j)-g_{2,\chi_{i\bar{j}}+\chi_{j\bar{i}}}(u_i u_j))\\
&-&{1\over 2}\sum_{i}\sum_{\{j,k\}} \delta_{ijk}
((-1)^{\chi_{j\overline{k}}}+(-1)^{\chi_{k\overline{j}}})
{\theta'\over \theta}(u_i) \\
&+&({1\over 2})^2\sum_{\{i,j,k\}}  \delta_{ijk}(-1)^{\chi_{i\bar{j}}+\chi_{k\bar{i}}}\,,
\end{eqnarray*}
 where $\delta_{ijk}=1$ if and only if $i,j,k$ are distinct.

Using \eqref{eqnFourierofZ} and 
the results in Appendix \ref{appendixEulerianquasielliptic}, 
we have
\begin{eqnarray*}
 g_{2,1}(u)&=&-({1\over 2})^2+{B_{2}\over 2}-({\theta''\over 2\theta}(u)+\mathbb{G}_2)\,,\\
  g_{2,0}(u)&=&{1\over 2}+g_{2,1}(u)+{\theta'\over \theta}(u)-{1\over 2}\,,\\
 g_{2,2}(u)&=&{1\over 2}+g_{2,1}(u)-{\theta'\over \theta}(u)-{1\over 2}\,,
\end{eqnarray*}
 where $B_{2}=1/6$ is the 2nd Bernoulli number.
This shows that $T_{n}^{\Omega}$ is a quasi-elliptic function whose leading part is independent of the choice of ordering and is given by
\begin{equation}\label{eqnT3explicitcomputation}
T_{3}^{\mathrm{lead}}=
\sum_{i<j}({\theta'\over \theta}(u_i)  {\theta'\over \theta}(u_j)+{\theta''\over 2\theta}(u_i u_j)+\mathbb{G}_2)\,.
\end{equation}
For the ordering $\Omega_{\mathrm{GW}}$, one has
\[
\chi_{j\bar{k}}+\chi_{k\bar{j}}=1\,,\quad
(({1\over 2})^2-{B_{2}\over 2})\cdot 3
+({1\over 2})^2\sum_{\{i,j,k\}}  \delta_{ijk}(-1)^{\chi_{i\bar{j}}+\chi_{k\bar{i}}}=0\,,
\]
and thus
\[
T_{3}^{\mathrm{lead}}=T_{3}^{\Omega_{\mathrm{GW}}}\,.
\]
On the other hand, from \eqref{eqngeneratingseriesformulaintro}, we see that the $3$-point function  $T_{3}$ is given
\begin{equation}\label{eqnT3BO}
T_3=
\sum_{1\leq i\neq j\leq 3}{\theta'\over \theta}(u_i) {\theta'\over \theta}(u_i u_j)
-\sum_{1\leq i\leq 3}{\theta''\over \theta}(u_i) +\theta'''(1)\,.
\end{equation}

Observe that $\theta'/\theta$ is connected to the Weierstrass $\zeta$-function via
\[
{\theta'\over \theta}(z)=\zeta({z})-2\mathbb{G}_2 z\,,\quad 
{\theta''\over \theta}=({\theta'\over \theta})'+({\theta'\over \theta})^2\,.
\]
The following addition formulas for Weierstrass $\zeta$- and $\wp$-functions are useful
\[
{\theta'\over \theta}(z+w)={\theta'\over \theta}(z)+{\theta'\over \theta}(w)
+{1\over 2}{\wp'(z)-\wp'(w)\over \wp(z)-\wp(w)}\,,
\quad
\wp(z+w)=-\wp(z)-\wp(w)+
{1\over 4}({\wp'(z)-\wp'(w)\over \wp(z)-\wp(w)})^2\,.
\]
They lead to
\begin{equation}\label{eqnhomogeneousadditionformula}
(\zeta(z_1)+\zeta(z_2)+\zeta(z_3))^{2}=\wp(z_1)+\wp(z_2)+\wp(z_3)\,,\quad z_1+z_2+z_3=0\,.
\end{equation}
The addition formulas also give
\begin{eqnarray*}
{\theta''\over \theta}(u_i u_j)&=&-{\theta''\over \theta}(u_i)-{\theta''\over \theta}(u_j)-6\mathbb{G}_2
+({\theta'\over \theta}(u_i))^2+({\theta'\over \theta}(u_j))^2\\
&&+({\theta'\over \theta}(u_i))^2+({\theta'\over \theta}(u_j))^2
+2 {\theta'\over \theta}(u_i) {\theta'\over \theta}(u_j)
+( {\theta'\over \theta}(u_i)+ {\theta'\over \theta}(u_j)){\wp'(u_i)-\wp'(u_j)\over \wp(u_i)-\wp(u_j)}\,,\\
{\theta'\over \theta}(u_i) {\theta'\over \theta}(u_i u_j)
&=&({\theta'\over \theta}(u_i))^2+{\theta'\over \theta}(u_i) {\theta'\over \theta}(u_j)+{\theta'\over \theta}(u_i)\cdot {1\over 2}{\wp'(u_i)-\wp'(u_j)\over \wp(u_i)-\wp(u_j)}\,.
\end{eqnarray*}
It follows that
\begin{eqnarray*}
T_{3}^{\mathrm{lead}}&=& -\sum_{1\leq i\leq 3}{\theta''\over \theta}(u_i) +2\sum_{i}({\theta'\over \theta}(u_i))^2
+2\sum_{i<j}{\theta'\over \theta}(u_i)){\theta'\over \theta}(u_j)\\
&&+{1\over 2}\sum_{i<j} ({\theta'\over \theta}(u_i)+ {\theta'\over \theta}(u_j)){\wp'(u_i)-\wp'(u_j)\over \wp(u_i)-\wp(u_j)}-6\mathbb{G}_2\,,\\
T_{3}
&=&
2\sum_{i} ({\theta'\over \theta}(u_i))^2+
2\sum_{i< j}{\theta'\over \theta}(u_i){\theta'\over \theta}(u_j)
+{1\over 2}\sum_{i<j} ({\theta'\over \theta}(u_i)+ {\theta'\over \theta}(u_j)){\wp'(u_i)-\wp'(u_j)\over \wp(u_i)-\wp(u_j)}\\
&&-\sum_{1\leq i\leq 3}{\theta''\over \theta}(u_i) +
\theta'''(1)\,.
\end{eqnarray*}
From \eqref{eqnthetalogexpansion} one sees that $\theta'''(1)=-6\mathbb{G}_{2}$, hence we obtain
\[
T_{3}^{\mathrm{lead}}=T_{3}\,.
\]

Furthermore, from \eqref{eqnhomogeneousadditionformula}
one can check that $T_{3}|_{u_1 u_2 u_3=1}=0$ as follows.
Using parity we see that on this divisor
\[
T_{3}|_{u_1 u_2 u_3=1}=-\sum_{i\neq j}{ \theta'\over \theta}(u_i ){ \theta'\over \theta}(u_j )-\sum_{i}{ \theta''\over \theta}(u_i )+\theta'''(1)\,.
\]
By the addition formula \eqref{eqnhomogeneousadditionformula} above, this gets simplified into
\begin{eqnarray*}
T_{3}|_{u_1 u_2 u_3=1}&=&\sum_{i}({ \theta'\over \theta}(u_i ))^2-\sum_{i}\wp(u_i)-\sum_{i}{ \theta''\over \theta}(u_i )+\theta'''(1)\\
&=&\sum_{i}({ \theta'\over \theta}(u_i ))^2-\sum_{i}(- ({ \theta'\over \theta}(u_i ))'-2\mathbb{G}_{2})-\sum_{i}{ \theta''\over \theta}(u_i )+\theta'''(1)\\
&=&\theta'''(1)+6\mathbb{G}_{2}=0\,.
\end{eqnarray*}
\xxqed
  \end{ex}

 \begin{rem}
 Comparing to the ``bosonic'' computations \cite{Boehm:2014tropical, Goujard:2016counting, Li:2020regularized} of $v^{0}$-terms of differential polynomials of
 \[
 \wp(z)+2\mathbb{G}_2=\sum_{k\neq 0} {ku^{k}\over 1-q^{k}}\,,
 \]
 the ``fermionic'' evaluation of $v^{0}$-terms here
 is much simpler.
 The reason is that
  the degree in ${\theta'\over \theta}({u_{i}v_{i}\over v_{j}})$
 is at most one, and there is no other $k$-factors in the numerator of the summand of $\theta'/\theta$ given in
 \eqref{eqnanalyticcontinutationofZ}, unlike the $k$ in the numerator $k u^{k}$ above.
 \end{rem}

\subsection{Recursive structure }
\label{secrecursivestructure}

Based on \eqref{eqngeneratingseriesformulaintro},
further formulas and recursive structures for $F_{n}$ have been found and combinational and modular properties (such as holomorphic anomaly equations) are studied, see e.g., \cite{Pixton:2008,
 Zagier:2016partitions}.
For instance, one can introduce a new variable $z_0$  that eventually is set to zero.
By Cramer's rule 
the determinant formula $\det M_{n}$ in \eqref{eqngeneratingseriesformulaintro}
is equivalent  to the recursion \cite {Zagier:2016partitions}
\[
\sum_{i=0}^{n}{(-1)^{n-i}\over (n-i)!}{\theta^{(n-i)}\over \theta}(z_0+z_1+\cdots+z_i) \det M_{i}(z_0, z_1,\cdots, z_{i-1})=0\,,\quad n\geq 1\,,\quad
\det M_0()=1\,.
\]
After symmetrization, this gives 
\begin{equation}\label{eqnFnrecursion}
\sum_{I\subseteq [n]}{(-1)^{n-|I|}}\theta^{(n-|I|)}\left(\sum_{i\in I}z_i\right) \cdot F_{|I|}(\{z_{i}\}_{i\in I})=0\,,\quad n\geq 1\,,
\quad
F_{0}()=1\,.
\end{equation}
The recursion \eqref{eqnFnrecursion} 
translates directly to
\begin{equation}\label{eqnTnrecursion}
\sum_{I\subseteq [n]}{(-1)^{n-|I|}}{\theta^{(n-|I|)}\over \theta}\left(\sum_{i\in I}z_i\right) \cdot T_{|I|}(\{z_{i}\}_{i\in I})=0\,,\quad n\geq 1\,,
\quad
{T_{0}()\over \theta()}:=1\,,\theta^{(n)}():=\theta^{(n)}(0)\,.
\end{equation}
The latter is in turn equivalent to the following combinatorial expression for $T_{n}$
\cite[Equation 7.7]{Bloch:2000}
\begin{equation}\label{eqnTnintermsofsumovercompositions}
T_{n}=\sum_{\gamma\in \Gamma_{[n]}}(-1)^{\gamma}\theta^{(|\gamma_{1}|)}(0)
\prod_{k=2}^{\ell} {\theta^{(|\gamma_{k}|)}\over \theta}(\sum_{i\in \gamma_{1}\cup \gamma_{2}\cdots \cup \gamma_{k-1}}z_{i})\,.
\end{equation}
In fact, let $\gamma'\in\Gamma_{I}$ and $\gamma=(\gamma', [n]\setminus I)$.
According the identity
\[
(-1)^{\gamma'} (-1)^{n-|I|-1}=(-1)^{\gamma}\,,
\]
we see immediately the right hand side in \eqref{eqnTnintermsofsumovercompositions} satisfies \eqref{eqnTnrecursion}.
The other direction follows from the uniqueness of the solution to the recursion.

The sum-over-partitions formula for $T_{n}^{\Omega}$ in Theorem \ref{thmsumoverpartitions} is quite similar to \eqref{eqnTnintermsofsumovercompositions}.
In this case, the recursion for $T_{n}^{\Omega}$ is similar to \eqref{eqnTnrecursion} and is given by
\[
T_{n}^{\Omega}=\sum_{I:\,\emptyset\subsetneq I\subsetneq [n]}(-1)^{|I|-1}{1\over |I|}G_{I}\cdot T_{[n]-I}^{\Omega}\,.
\]
Simplifying, this is
\begin{equation}\label{eqnTnOmegarecursion}
\sum_{I:\, I\subseteq[n]}{(-1)^{|I|}\over |I|}G_{I}\cdot T_{[n]-I}^{\Omega}=0\,,
\end{equation}
where we have used  Convention \ref{conventionG0T0}.

\section{Finer structure of GW generating series}
\label{secquasielliptic}

We have related the combinatorial quantities $G_{\pi}$ in Theorem \ref{thmsumoverpartitions} to Jacobi theta functions.
In this section we study the finer structure of them based on
their more concrete expressions that we shall derive.
Our main focus in this section is $T_{n}^{\Omega_{\mathrm{GW}}}$, which will be shown to coincide with the
GW generating series $T_{n}$ in \eqref{eqndefTn}.

\subsection{Pure weight structure of $T_{n}^{\Omega_{\mathrm{GW}}}$}

\begin{dfn}
Define
\begin{equation}\label{eqndfnHn}
H_{n}(z):=-(n-1)!\cdot [c^{n-1} ]
\left(
 {1\over 2}{1-e^{-c}\over 1+e^{-c}} +
  {1\over 2} {e^{c}+1\over e^{c}-1}-S(z,c)
  \right)\,,
\end{equation}
where (this is essentially the Szeg\"o kernel \eqref{eqndefofSzegokernel})
\begin{equation}\label{eqndfnSzegoc}
S(z,c)={\theta'(0)\theta(z+c)\over \theta(z)\theta(c)}\,.
\end{equation}
\end{dfn}

We have the following result.

\begin{lem}\label{lembuildingblock}
For $n\geq 1$,
 the following statements hold.
\begin{enumerate}[i).]
\item
One has
\begin{eqnarray*}
  H_{n}(z) =-(1-2^{n}){B_{n}\over n}-{B_{n}\over n}+(n-1)!\cdot [c^{n-1}]S(z,c)\,,
  \end{eqnarray*}
  where $B_{n}$ are the Bernoulli numbers: $B_{1}=-{1\over 2},B_{2}={1\over 6},\cdots$.
\item
In the region $\Omega_{\mathrm{GW}}$, one has
\begin{equation}\label{eqnbuildingblock}
{(-1)^{n-1}\over n}G_{n}^{\mathrm{GW}}
=H_{n}
 \,.
\end{equation}
\end{enumerate}
\end{lem}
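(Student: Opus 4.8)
The plan is to prove (i) and (ii) in turn, using (i) as the input to a Fourier–mode comparison that establishes (ii).

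For part (i), I would first observe that the bracket in \eqref{eqndfnHn} is regular at $c=0$: the simple pole of $\tfrac12\frac{e^{c}+1}{e^{c}-1}$ cancels the $c^{-1}$ pole of the Szegő kernel, whose Laurent expansion is $S(z,c)=c^{-1}+\frac{\theta'}{\theta}(z)+O(c)$, so that $[c^{n-1}]$ makes sense for all $n\ge 1$. I would then recognize the two rational functions of $e^{c}$ as Bernoulli generating series via $\frac{1}{e^{c}-1}=\sum_{m\ge 0}\frac{B_{m}}{m!}c^{m-1}$ and $\frac{1}{e^{c}+1}=\sum_{m\ge 0}(1-2^{m})\frac{B_{m}}{m!}c^{m-1}$, extract the coefficient of $c^{n-1}$, and multiply by $-(n-1)!$. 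Collecting the two Bernoulli contributions gives the stated closed form, while the $-S(z,c)$ summand passes through unchanged as $(n-1)![c^{n-1}]S(z,c)$. This is a routine generating–function computation; the only bookkeeping subtlety is the constant $\tfrac12$ pieces, which only affect the $n=1$ term.

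For part (ii), the starting point is the single–block term of Lemma~\ref{lemv0sumoverpartition}, namely $\frac{(-1)^{n-1}}{n}G_{n}^{\mathrm{GW}}=\frac{(-1)^{n-1}}{n}\sum_{\tau\in\mathfrak{S}_{n}}g_{n,\mathrm{cdes}(\tau)}(u_{[n]})$ in $\Omega_{\mathrm{GW}}$. I would substitute the explicit series from Definition~\ref{dfngbafunction}, separating the constant piece $(-1)^{\mathrm{cdes}(\tau)}(\tfrac12)^{n}$ from the Fourier piece $(-1)^{n}\sum_{k\ne 0}\frac{q^{(n-\mathrm{cdes}(\tau))k}}{(1-q^{k})^{n}}u^{k}$. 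On the other side I would use the form of $H_{n}$ from part (i): the Bernoulli constant contributes only to the $u^{0}$–mode, while $(n-1)![c^{n-1}]S(z,c)$ supplies the $u^{k}$–modes once $S(z,c)$ is Fourier–expanded in the region $1<|u|<|q|^{-1}$. Matching $u^{k}$–coefficients for $k\ne 0$, and the $u^{0}$–coefficient separately, then reduces the whole identity to combinatorial statements about $\mathrm{cdes}$.

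The hard part will be this combinatorial resummation. Since the $k$–th Fourier mode of $S(z,c)$ is geometric in $e^{c}$, its coefficient $(n-1)![c^{n-1}]$ is, up to normalization, $\sum_{m\ge 0}m^{\,n-1}q^{km}$, and matching this against $\frac{1}{(1-q^{k})^{n}}\sum_{\tau}q^{(n-\mathrm{cdes}(\tau))k}$ is exactly a Worpitzky/Eulerian identity $\sum_{m\ge 0}m^{\,n-1}x^{m}=\frac{x\,A_{n-1}(x)}{(1-x)^{n}}$ combined with the relation between cyclic descents on $\mathfrak{S}_{n}$ and ordinary descents on $\mathfrak{S}_{n-1}$ (the cyclic Eulerian polynomial being $n\,x\,A_{n-1}(x)$). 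These are the Eulerian facts underlying Theorem~\ref{thmquasiellipticitygba} and Appendix~\ref{appendixEulerianquasielliptic}, which I would invoke; the remaining effort is to track the sign $(-1)^{n-1}$ and the normalization $1/n$ so that the cyclic Eulerian polynomial lands precisely on the Szegő–kernel coefficient, and to check the $u^{0}$–mode against the signed count $\sum_{\tau}(-1)^{\mathrm{cdes}(\tau)}$.
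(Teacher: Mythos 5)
Your overall route coincides with the paper's own proof. For (i), your Bernoulli generating-function extraction is the same computation, with your identity $\tfrac{1}{e^{c}+1}=\sum_{m\ge 0}(1-2^{m})\tfrac{B_{m}}{m!}c^{m-1}$ serving as a tidier equivalent of the paper's detour through Eulerian polynomials at $x=-1$ (via $A_{n-1}(-1)=(-1)^{n-1}(2^{n}-4^{n})B_{n}/n$). For (ii), your plan is exactly the paper's argument: in $\Omega_{\mathrm{GW}}$ one has $\chi_{\tau}=\mathrm{cdes}(\tau)$, summing over $\mathfrak{S}_{n}$ produces the cyclic Eulerian polynomial $A_{n}^{(\mathrm{c})}(x)=nxA_{n-1}(x)$, and the mode-by-mode matching you describe (geometric Fourier modes of $S(z,c)$ against $\sum_{\ell\ge 1}\ell^{\,n-1}x^{\ell}=xA_{n-1}(x)/(1-x)^{n}$) is precisely the content of Proposition \ref{propAm}, which the paper invokes at the same point; you merely unpack it instead of citing it.

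One concrete warning about (i): your claim that the computation is routine and that the bookkeeping ``only affects the $n=1$ term'' is not right, and as described your derivation would not land on the stated closed form. Writing $\tfrac12\tfrac{1-e^{-c}}{1+e^{-c}}=\tfrac12-\tfrac{1}{e^{c}+1}=\tfrac12\tanh(c/2)$ and applying your expansion literally to the definition \eqref{eqndfnHn} gives, for every even $n$, the first term $+(1-2^{n})\tfrac{B_{n}}{n}$ --- the \emph{opposite} sign of the stated $-(1-2^{n})\tfrac{B_{n}}{n}$. Check $n=2$: $[c^{1}]\tfrac12\tanh(c/2)=\tfrac14$, so \eqref{eqndfnHn} yields $-\tfrac14-\tfrac{1}{12}+[c^{1}]S$, whereas the lemma asserts $+\tfrac14-\tfrac{1}{12}+[c^{1}]S$. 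The lemma's sign is the operative one: for $n=2$ both elements of $\mathfrak{S}_{2}$ have $\mathrm{cdes}=1$, so $\tfrac{(-1)}{2}G_{2}^{\mathrm{GW}}=-g_{2,1}=\tfrac14-\tfrac{B_{2}}{2}+\tfrac{\theta''}{2\theta}+\mathbb{G}_{2}$, matching the stated formula and all downstream uses in Theorem \ref{thmTngwintermsofB}. So the printed definition \eqref{eqndfnHn} carries a sign typo in its first summand (the paper's own proof silently absorbs it by inserting a factor $(-1)^{n-1}$ into the expansion of the $\tanh$-piece). You need to detect and resolve this discrepancy --- e.g.\ by taking the first summand of \eqref{eqndfnHn} with the opposite sign --- rather than calling the step routine; once that is fixed, the rest of your argument goes through and is essentially identical to the paper's.
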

\begin{proof}
\begin{enumerate}[i).]
\item
Recall the generating series of Bernoulli numbers
\begin{equation}
\sum_{m\geq 0} {t^{m}\over m!}B_{m}={t \over e^{t}-1}
\,,\quad
 {1\over 2}{e^{t}+1\over e^{t}-1}
 = {1\over t}+ \sum_{n\geq 2} {B_{n}\over n}{ t^{n-1}\over (n-1)!}\,.
\end{equation}
Consider also the generating series of Eulerian polynomials $A_{m}(x),m\geq 0$ (see e.g., \cite{Bona:2012})
\[
\sum_{m\geq 0} {t^{m}\over m!}A_{m}(x)={x-1\over x-e^{(x-1)t}}\,.\]
Substituting  $x$ by $-1$, we see that
\[
\sum_{m\geq 0} A_{m}(-1){t^{m}\over m!}
={2\over 1+e^{-2 t}}={2\over e^{-2t}-1}-{4\over e^{-4t}-1}\,.
\]
This gives
\begin{equation}
 {1\over 2}{1-e^{-t}\over 1+e^{-t}}
 =  \sum_{n\geq 2} ({1\over 2})^{n}A_{n-1}(-1){ t^{n-1}\over (n-1)!}\,,\quad
A_{n-1}(-1)=(-1)^{n-1}(2^{n}-4^{n}) {B_{n}\over n}\,,\quad n\geq 1\,.
\end{equation}
From
\eqref{eqndfnHn}, we obtain
\[
  H_{n}(z)=-(n-1)!\cdot [c^{n-1} ] \left(
\sum_{n\geq 2} (-1)^{n-1}({1\over 2})^{n}A_{n-1}(-1){ c^{n-1}\over (n-1)!}
+
\sum_{n\geq 2} {B_{n}\over n}{ c^{n-1}\over (n-1)!}
-S(z,c)
\right)\,.
\]
Combining these,
one finishes the proof.

\item

For the case $n=1$, we have from \eqref{eqnG1} and \eqref{eqnFourierofZ} that
\[
G_{1}^{\mathrm{GW}}(z)={\theta'\over \theta}=[c^{0}] (S(z,c))=H_{1}(z)\,.
\]
Hence we only need to consider the case $n\geq 2$.
We start by simplifying $G_{n}^{\mathrm{GW}}(z)$.
Recall that the generating series $A_{n}^{(\mathrm{c})}(x)$ of cyclic descent numbers is related to the Eulerian polynomial $A_{n-1}$ (as generating series of descent numbers) 
by \cite{Bona:2012} 
 \[
A_{n}^{(\mathrm{c})}(x)
=nxA_{n-1}(x)\,,\quad 	n\geq 2\,.
\]
Therefore,
we see that for $n\geq 2$
\begin{eqnarray*}
G_{n}^{\mathrm{GW}}(u)&=&
\sum_{\sigma\in \mathfrak{S}_{n}}\left((-1)^{\mathrm{cdes}(\sigma)}
({1\over 2})^{n}
+(-1)^{n}\sum_{k\neq 0} {q^{(n- \mathrm{cdes}(\sigma))k}\over (1-q^{k})^{n}}u^{k}
\right)\\
&=&
\sum_{\sigma\in \mathfrak{S}_{n}}\left((-1)^{\mathrm{cdes}(\sigma)}
({1\over 2})^{n}
+\sum_{k\neq  0} {q^{-k\cdot \mathrm{cdes}(\sigma)}\over (1-q^{-k})^{n}}u^{k}
\right)\\
&=&-n({1\over 2})^{n} A_{n-1}(-1)+
\sum_{k\neq 0}  {nq^{-k}A_{n-1}(q^{-k})\over (1-q^{-k})^{n}}u^{k}
\,.
\end{eqnarray*}
By Proposition \ref{propAm},  one then has
\[
G_{n}^{\mathrm{GW}}(u)
=(-1)^{n}n \left(({1\over 2})^{n}(-1)^{n-1} A_{n-1}(-1)+{B_{n}\over n} - (n-1)!\,\cdot [w^{n-1}] S(z,w) \right)\,.
\]
The claim now follows from the expressions for $H_{n}$ given in (i).
\end{enumerate}
\end{proof}

From \eqref{eqnwLaurentcoefficientofS2}, we have for $n\geq 1$
\begin{eqnarray}\label{eqnBnquasiellipticformula}
 (n-1)!\cdot [c^{n-1}] S(z,c)
 &=&{1\over n}\mathbold{B}_{n}(\mathcal{E}_{1}^{*},\mathcal{E}_{2}^{*},\cdots,
\mathcal{E}_{n}^{*})\nonumber\\
&=&{1\over n}\sum_{a+2b=n}{n\choose a}
\mathbold{B}_{a}((\ln\theta)',\cdots ,(\ln\theta)^{(a)})\cdot 2\mathbb{G}_{2b}\nonumber\\
&=&{1\over n}\sum_{a+2b=n}{n\choose a}
{\theta^{(a)}\over \theta}\cdot 2\mathbb{G}_{2b}
\,,
\end{eqnarray}
where $\mathbold{B}_{n}$ is the complete Bell polynomial.
By Definition 	\eqref{dfnringofquasiellipticfunctions},
this is a quasi-elliptic function of pure weight $n\geq 1$.
We have mentioned earlier in Theorem \ref{thmsumoverpartitions} that
$T_{n}^{\Omega}$ is a quasi-elliptic function of mixed weight with leading weight $n-1$.
For the region  $\Omega_{\mathrm{GW}}$,  the resulting integral $T_{n}^{\Omega_{\mathrm{GW}}}$ is particularly interesting and we have the following result on pure-weight structure.

\begin{thm}\label{thmTngwintermsofB}
Let $\mathbold{B}_{J}, J\subseteq [n]$ be the complete Bell polynomial in the variables
\begin{equation}\label{eqndfgEm}
\mathcal{E}_{a}^{*}=
(\ln \theta)^{(a)}(\prod_{j\in J}u_{j})+2\mathbb{G}_{a}\,,\quad a\geq 1\,,
\end{equation}
where for positive odd integers $a$, $\mathbb{G}_a$ is set to zero and for positive even integers
it is the following Eisenstein summation 
\begin{equation}\label{eqndfnG}
	\mathbb{G}_{a}={1\over 2}\cdot (a-1)!\cdot \sum_{\lambda\in \Lambda_{\tau}-\{(0,0)\}}^{~\quad e}{1\over \lambda^{a}}\,.
\end{equation}
Then
\begin{equation}\label{eqnTngwpureweightstructure}
T_{n}^{\Omega_{\mathrm{GW}}}=
\sum_{j\in [n]}\sum_{\pi\in \Pi_{[n]\setminus \{j\}}}
{\mathbold{B}_{{\pi}}\over |\pi|}\,,
\quad
{\mathbold{B}_{{\pi}}\over |\pi|}:=\prod_{k} {\mathbold{B}_{\pi_{k}}\over |\pi_{k}|}\,,
\end{equation}
where we use the convention
\[
{1\over |\pi_k|}
\mathbold{B}_{\pi_k}=1\,\quad
\mathrm{for}\,\quad \pi_k=\emptyset\,.
\]
In particular, $T_{n}^{\Omega}$ is  a quasi-elliptic function of pure weight $n-1$.
\end{thm}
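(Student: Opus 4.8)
The plan is to start from the sum-over-partitions expression \eqref{eqnmixedweightstructure} of Theorem \ref{thmsumoverpartitions}, specialize every building block to $\Omega_{\mathrm{GW}}$, and extract the pure-weight content. By Lemma \ref{lembuildingblock} each block $b$ of size $m$ now contributes
\[
\frac{(-1)^{m-1}}{m}G_m^{\mathrm{GW}}(z_b)=H_m(z_b)=\frac{\mathbold{B}_m(z_b)}{m}+\kappa_m,\qquad \kappa_m:=(2^m-2)\frac{B_m}{m}\in\Q,
\]
where by \eqref{eqnBnquasiellipticformula} the term $\mathbold{B}_m/m$ is quasi-elliptic of pure weight $m$ while $\kappa_m$ has weight $0$ (note $\kappa_1=0$). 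Since by \eqref{eqncJintermsofsumoversequences} each $c_J$ is a rational number, every summand of \eqref{eqnmixedweightstructure} is a product of one $c_J$, some constants $\kappa$, and some Bell polynomials; its weight equals $\sum_{b\in\mathcal{B}}|b|\le n-|J|\le n-1$, with equality precisely when $|J|=1$ and every block keeps its Bell factor. A direct evaluation of \eqref{eqncJintermsofsumoversequences} gives $c_{\{j\}}=1$, so the weight-$(n-1)$ component of $T_n^{\Omega_{\mathrm{GW}}}$ is exactly the right-hand side of \eqref{eqnTngwpureweightstructure}. The theorem is therefore equivalent to the vanishing of all lower-weight contributions, which will also yield the pure-weight claim since each $\mathbold{B}_{\pi_k}$ is homogeneous of weight $|\pi_k|$.

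To organize this vanishing I would expand each $H_{|b|}=\mathbold{B}_{|b|}/|b|+\kappa_{|b|}$ and regroup the resulting double sum according to the set $K\subseteq[n]$ of indices lying in blocks that retain their Bell factor. For a fixed $K$ and a fixed partition $\rho$ of $K$, the complementary indices $M:=[n]\setminus K$ must split as $M=J\sqcup(\text{union of }\kappa\text{-blocks})$ with $J\neq\emptyset$; because $c_J$ depends only on $|J|$ for $\Omega_{\mathrm{GW}}$ (the cyclic-descent sum \eqref{eqncJintermsofsumoversequences} is invariant under relabeling) and $\kappa_{|b|}$ only on $|b|$, the coefficient of $\prod_{b\in\rho}\mathbold{B}_{|b|}(z_b)/|b|$ is a pure number depending only on $|M|$. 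This gives
\[
T_n^{\Omega_{\mathrm{GW}}}=\sum_{K\subseteq[n]}\Phi(n-|K|)\sum_{\rho\in\Pi_K}\frac{\mathbold{B}_\rho}{|\rho|},\qquad \Phi(m):=\sum_{k=1}^{m}\binom{m}{k}\tilde c_k\,P(m-k),
\]
where $\tilde c_k=c_J$ for $|J|=k$ and $P(j)=\sum_{\sigma\in\Pi_{[j]}}\prod_{b\in\sigma}\kappa_{|b|}$ with $P(0)=1$ (so $\kappa_1=0$ kills every partition with a singleton). The statement that $T_n^{\Omega_{\mathrm{GW}}}$ is pure of weight $n-1$ and equals \eqref{eqnTngwpureweightstructure} then reduces to the single scalar identity $\Phi(m)=\delta_{m,1}$ for all $m\ge1$, the case $\Phi(0)=0$ being automatic.

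Passing to exponential generating functions, the exponential formula gives $\widehat P(t):=\sum_{j\ge0}P(j)t^j/j!=\exp\!\big(\sum_{m\ge1}\kappa_m t^m/m!\big)$, and with $\widehat C(t):=\sum_{k\ge1}\tilde c_k t^k/k!$ the binomial convolution yields $\sum_{m\ge0}\Phi(m)t^m/m!=\widehat C(t)\widehat P(t)$. Hence $\Phi(m)=\delta_{m,1}$ is equivalent to the closed identity
\[
\widehat C(t)=t\exp\!\Big(-\sum_{m\ge1}\kappa_m\frac{t^m}{m!}\Big).
\]
I would establish this by computing $\widehat C$ explicitly: using $\tilde c_k=2^2[u_0^0]H_{k+1}(z_0+z_J)$ from \eqref{eqncJformula} together with the definition \eqref{eqndfnHn} of $H_m$ and the Szeg\"o-kernel expression in Lemma \ref{lembuildingblock}(i), the constants $\tilde c_k$ are governed by the Eulerian polynomials $A_k(-1)$ and Bernoulli numbers, which are exactly the data fixing $\kappa_m$. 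The main obstacle is precisely this last identity — matching the ordering-dependent cyclic-descent constants $\tilde c_k$ against the Bernoulli exponential $\exp(-\sum_m\kappa_m t^m/m!)$. I expect it to drop out of the Eulerian/Bernoulli generating-function relations assembled in Appendix \ref{appendixEulerianquasielliptic} (in particular Proposition \ref{propAm}); once it is in hand, only the $K=[n]\setminus\{j\}$ terms survive, giving \eqref{eqnTngwpureweightstructure} and pure weight $n-1$.
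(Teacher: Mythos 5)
Your proposal is correct and is essentially the paper's own argument: the paper makes the same block-by-block split $H_m=\mathbold{B}_m/m+(2^m-2)B_m/m$ via Lemma \ref{lembuildingblock}, performs the same regrouping over the Bell-retaining blocks (its coefficient $\kappa_{\pi'''}=\frac{4}{m}[\varepsilon^{1}]\mathbold{B}_{m}(x+\varepsilon y)=\frac{4}{m}(f_{1}e^{f_{0}})^{(m)}|_{t=0}$ is exactly your binomial convolution $\Phi$ written in exponential-generating-function form), and the identity you left pending is precisely the paper's closing computation $e^{f_0}f_1=t^2/4$. It does check out: from \eqref{eqncJformula} one has $\tilde c_k=4(2^{k+1}-1)B_{k+1}/(k+1)$, whence $\widehat C(t)=\frac{4}{t}\left(\frac{t}{2}-\frac{t}{e^{t}+1}\right)=\frac{2(e^{t}-1)}{e^{t}+1}$, while $\exp\left(-\sum_{m\ge 1}\kappa_m\frac{t^m}{m!}\right)=\frac{2(e^{t}-1)}{t(e^{t}+1)}$, so $\widehat C(t)\widehat P(t)=t$, i.e.\ $\Phi(m)=\delta_{m,1}$, as required.
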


\begin{proof}

Using \eqref{eqnwLaurentcoefficientofS2}, we can write Lemma \ref{lembuildingblock} (i)
as
\[
H_{\pi_{k}}=
{\mathbold{B}_{\pi_{k}}\over |\pi_{k}|}-{B_{|\pi_{k}|}\over |\pi_{k}|}+(2^{|\pi_{k}|}-1){B_{|\pi_{k}|}\over |\pi_{k}|}\,,
\]
with $\mathbold{B}_{\pi_{k}}$
being a quasi-elliptic function of pure weight $|\pi_{k}|$.
Combining
Lemma \ref{lemv0sumoverpartition}, Lemma \ref{lembuildingblock} and
 Theorem
\ref{thmsumoverpartitions}, we have
\begin{equation}\label{eqnTngwintermsofH}
T_{n}^{\Omega_{\mathrm{GW}}}=2^{2}\cdot [u_0^0]
\sum_{{\pi}\in \Pi^{*}_{\widetilde{[n]}}} H_{{\pi}}
\,,\quad H_{\pi}:=\prod_{k}H_{\pi_{k}}\,.
\end{equation}
By the computations in the proof of Lemma \ref{lembuildingblock} (ii) or \eqref{eqnwLaurentcoefficientofS}, we see that in \eqref{eqncJformula}
\[
c_{\pi_k}=4 [u_{0}^{0}] H_{\pi_{k}}\,,\quad
[u_{0}^{0}] H_{\pi_{k}}=(2^{|\pi_{k}|}-1){B_{|\pi_{k}|}\over |\pi_{k}|}\,\quad~\mathrm{if}\quad 0\in \pi_{k}\,.
\]

Continuing with \eqref{eqnTngwintermsofH}.
For any partition $\pi=\{\pi_{1},\cdots,\pi_{\ell}\}\in \Pi_{[n]}$, we consider
the set of its compositions/lists $\Gamma_{\pi}$
consisting of ordered collections of the form $\gamma=(\pi', \pi'', \pi''')$,
where the $\pi'$ is a singlet (consisting of only one part of $\pi$) and the other two can be empty in which case the contribution below is defined to be $1$.
Then we can  rewrite \eqref{eqnTngwintermsofH} as
\begin{equation}\label{eqnTngwintermsofB}
T_{n}^{\Omega_{\mathrm{GW}}}=2^{2}
\sum_{\pi\in\Pi_{[n]}} \sum_{\gamma\in \Gamma_{\pi}}
\left(\prod_{k:\, \pi_{k}\in \pi'}(2^{|\widetilde{\pi_{k}}|}-1)  {B_{|\widetilde{\pi_{k}}|}\over |\widetilde{\pi_{k}}|}
\cdot\prod_{k:\, \pi_{k}\in \pi''}(2^{|\pi_{k}|}-2)  {B_{|\pi_{k}|}\over |\pi_{k}|}
\cdot \prod_{k:\, \pi_{k}\in \pi'''}{\mathbold{B}_{\pi_{k}}\over |\pi_{k}|}
\right)\,,
\end{equation}
where $\widetilde{\pi_{k}}:=\pi_{k}\cup\{0\}$.
Since the quasi-elliptic function ${\mathbold{B}_{n}/ n}$ has pure weight $n$,
each summand above has pure weight
\[
w(\pi'''):=
\sum_{k:\, \pi_{k}\in \pi'''}|\pi_{k}|\,.
\]
Fix $\pi'''$, consider the coefficient of $\prod_{k:\, \pi_{k}\in \pi'''}{\mathbold{B}_{\pi_{k}}\over |\pi_{k}|}$ in \eqref{eqnTngwintermsofB}.
It is given by
\[\kappa_{\pi'''}:=
2^2
\sum_{\rho\in \Pi_{[n]\setminus (\bigcup_{k:\, \pi_{k}\in \pi'''} \pi_{k})}}
\sum_{\gamma\in\Gamma_{\rho}}\left(\prod_{k:\, \rho_{k}\in \rho'}(2^{|\widetilde{\rho_{k}}|}-1)  {B_{|\widetilde{\rho_{k}}|}\over |\widetilde{\rho_{k}}|}
\cdot\prod_{k:\, \rho_{k}\in \rho''}(2^{|\rho_{k}|}-2)  {B_{|\rho_{k}|}\over |\rho_{k}|}\right)\,,
\]
where $\gamma=(\rho',\rho'')\in\Gamma_{\rho}$ is a composition and $\rho'$ is a singlet.
Again the product over $\rho''$ is $1$ if $\rho''$ is empty.
Observe that this quantity only depends on
the cardinalities of $\rho_{k}\in \rho$ and is possibly nonzero only if
$
|\rho_{k}|
$ is odd for the one in $\rho'$ and even  for the ones in $\rho''$.
It follows that
\begin{equation}\label{eqnkappasetpartitionformula}
	\kappa_{\pi'''}=
2^2
\sum'_{\rho\in \Pi_{[n]\setminus (\bigcup_{k:\, \pi_{k}\in \pi'''} \pi_{k})}}
\left(\prod_{k:\, \rho_{k}\in \rho'}(2^{|\widetilde{\rho_{k}}|}-1)  {B_{|\widetilde{\rho_{k}}|}\over |\widetilde{\rho_{k}}|}
\cdot\prod_{k:\, \rho_{k}\in \rho''}(2^{|\rho_{k}|}-2)  {B_{|\rho_{k}|}\over |\rho_{k}|}\right)\,,
\end{equation}
where $\sum'$ is over the set of partitions where all blocks except one, say $\rho_{*}$,
have even cardinalities, and $\rho'=\{\rho_{*}\}$.

We now simplify the coefficient $\kappa_{\pi'''}$.
Let
\[
m=
n+1-w(\pi''')=|\widetilde{[n]}\setminus (\cup_{k:\, \pi_{k}\in \pi'''} \pi_{k})|\,.
\]
Then by construction $m$ is an even positive integer.
Denote
\[
x_{i}=(2^{i}-2) {B_{i}\over i}\,,\quad y_{i}=(2^{i}-1) {B_{i}}\,,\quad i\geq 2\,.
\]
Then rewritting the set partition in  \eqref{eqnkappasetpartitionformula} in terms of integer partition and using the sum-over-partitions formula for the Bell polynomials, we have
\begin{eqnarray*}
{1\over 4}\kappa_{\pi'''}&=&\sum_{\sum_{k=1}^{\ell}\lambda_k=n-w} {\ell\over \ell !}{n-w\choose \lambda_1,\cdots,\lambda_{\ell}}{y_{\lambda_1+1}\over \lambda_1+1} \prod_{k\geq 2}x_{\lambda_k}\\
&=&\sum_{\substack{\sum_{k=1}^{\ell}\lambda_k=n+1-w\\
\lambda_{k}\geq 2}} {\ell\over \ell !} {1\over n+1-w}{n+1-w\choose \lambda_1,\cdots,\lambda_{\ell}} y_{\lambda_1} \prod_{k\geq 2}x_{\lambda_k}\\
&=&\sum_{\substack{\sum_{k=1}^{\ell}\lambda_k=n+1-w\\
\lambda_{k}\geq 2}} {\ell \over \ell !\,\ell } {1\over n+1-w}{n+1-w\choose \lambda_1,\cdots,\lambda_{\ell}} [\varepsilon^1 ] \prod_{k\geq 1} (x_{\lambda_k}+\varepsilon y_{\lambda_k})\\
&=& {1\over n+1-w}\sum_{\ell} [\varepsilon^1]\mathbold{B}_{n+1-w,\ell} (x+\varepsilon y)\\
&=& {1\over n+1-w}[\varepsilon^1]\mathbold{B}_{n+1-w} (x+\varepsilon y)\,.
\end{eqnarray*}
Define
\[
f(t;\varepsilon)=\sum_{i\geq 2} (  x_{i}+ \varepsilon y_{i}) {t^{i}\over i!}= f_{0}(t)+ \varepsilon f_1(t)\,.
\]
By the set partition version of the Fa\`a di Bruno formula, we have for $m\geq 2$
\[
e^{- f}(e^{ f})^{(m)}|_{t=0}
= \mathbold{B}_{m}(x+\varepsilon y)\,.
\]
Observe that $f_{0}|_{t=0}=f_{1}|_{t=0}=0$, we therefore have 
\begin{eqnarray*}
\kappa_{\pi'''}={4\over m}
[\varepsilon^{1}]
\mathbold{B}_{m}(x+\varepsilon y)
&=&{4\over m}[\varepsilon^{1} ]\left(e^{-  f}(e^{f})^{(m)}|_{t=0}\right)\\
&=&{4\over m}\left(- f_{1} e^{- f_0} (e^{ f_0})^{(m)}|_{t=0}
+ e^{-f_0} ( f_1 e^{ f_0})^{(m)}|_{t=0}\right)\,\\
&=&{4\over m} ( f_1 e^{ f_0})^{(m)}|_{t=0}\,.
\end{eqnarray*}
Direct computations show that
\[
f_0=\ln {t(e^{t}+1)\over 2 (e^{t}-1)}=\ln (
 {t\over 2}+{t\over e^{t}-1})\,,\quad
 f_{1}={t\over 2}-{t\over e^{t}+1}\,,
\]
and thus
\[
e^{f_0} f_1={t^2\over 4}\,.
\]
This tells that
\[
\kappa_{\pi^{'''}}
=\begin{cases}
1\,,\quad w(\pi^{'''})=n-1\,,\\
0 \,, \quad w(\pi^{'''})\neq n-1\,.
\end{cases}
\]
The proof is now completed.

\end{proof}
The complete Bell polynomials $\mathbold{B}_{m}$ themselves have combinatorial interpretations as summations over
partitions of $[m]$.
The partition-of-partition structure in \eqref{eqnTngwpureweightstructure}
enjoy a few
interesting combinatorial properties as we shall see later in e.g., the elliptic transformation property of $T_{n}^{\Omega_{\mathrm{GW}}}$.

\subsection{Quasi-ellipticity and quasi-modularity of $T_{n}^{\Omega_{\mathrm{GW}}}$}\label{secelliptictransformation}

A quasi-elliptic function $f\in \mathrm{QE}$ is called elliptic if it is invariant under the elliptic transformations $z\mapsto z+\lambda,\lambda\in \Lambda_{\tau}$.
It is called modular if each of its weight-$m$ component $f_{m}$ satisfies 
\[
f_{m}({z\over c\tau+d},{a\tau+b\over c\tau+d})=(c\tau+d)^{m}f_{m}(z,\tau)\,,\quad
\forall 
\begin{pmatrix}
	a & b\\
	c & d
	\end{pmatrix}\in \mathrm{SL}_{2}(\mathbb{Z})\,.
\]
In general, under the modular transformation by $\mathrm{SL}_{2}(\mathbb{Z})$,
 $f\in \mathrm{QE}$ is only quasi-modular in the sense of \cite{Kaneko:1995}.
 For example, both $\mathbb{G}_{4},\mathbb{G}_{6}$ are modular but $\mathbb{G}_{2}$ is quasi-modular.
 
Using the relation between the Jacobi theta function $\theta$ and Weierstrass elliptic functions, it is direct to check that
\begin{equation}\label{eqnEm*2formula}
	\mathcal{E}_{2}^{*}=({\theta'\over \theta})'+2\mathbb{G}_2=-\wp(z)\,.
\end{equation}
It follows that all of the functions $\mathcal{E}_{m}^{*},m\geq 2$ given in
\eqref{eqndfgEm}
are elliptic and modular.
However, $\mathcal{E}_{1}^{*}$ is not elliptic nor modular: 
\begin{equation}\label{eqnEm*1formula}
	\mathcal{E}_{1}^{*}={\theta'\over \theta}
	=
	\zeta({z})
	-2z\mathbb{G}_2\,,\quad \mathcal{E}_{1}^{*}(qu)=\mathcal{E}_{1}^{*}(u)-1\,.
\end{equation}

The behavior of $T_{n}^{\Omega_{\mathrm{GW}}}$ under elliptic and modular transformations
can be described in a very precise way using  \eqref{eqnTngwpureweightstructure}
in Theorem \ref{thmTngwintermsofB}.
We now derive some properties of it using the  following binomial type relation
satisfied by the complete Bell polynomial
\begin{equation}\label{eqnbinomialBell}
\mathbold{B}_{n}(x+y)=\sum_{a=0}^{n}{n\choose a} \mathbold{B}_{a}(x)\mathbold{B}_{b}(y)\,.
\end{equation}
Recall that  $T_{n}^{\Omega_{\mathrm{GW}}}$ is invariant under the $\mathfrak{S}_{n}$ action on $u_{i},i=1,2,\cdots, n$  from Proposition \ref{propGWpairing=bindpairing} or more concretely
\eqref{eqnTngwpureweightstructure}.
Therefore, it is enough to discuss the behavior of $T_{n}^{\Omega_{\mathrm{GW}}}$ under the elliptic transformation $u_1\mapsto q u_1$.

 \begin{cor}\label{corTngwdifferenceequation}
 Let the notation be as above. For $n\geq 1$, one has
 \[
 T_{n}^{\Omega_{\mathrm{GW}}}(qu_1,\cdots, u_n)=T_{n}^{\Omega_{\mathrm{GW}}}(u_1,\cdots, u_n)\\
+ \sum_{I:\, \emptyset \subsetneq I\subseteq [n]\setminus \{1\}}(-1)^{I}T_{n-|I|}(u_1u_{I},\cdots, \check{u_{i}},\cdots)\,,
\]
where the notation $\check{u}_{i}$ means the argument $u_{i}, i\in I$ is omitted.
 \end{cor}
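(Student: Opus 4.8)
The plan is to argue directly from the pure-weight formula \eqref{eqnTngwpureweightstructure}. For a finite set $X$ write $P(X):=\sum_{\pi\in\Pi_{X}}\prod_{k}\mathbold{B}_{\pi_{k}}/|\pi_{k}|$ with $P(\emptyset)=1$, where $\mathbold{B}_{S}:=\mathbold{B}_{|S|}(u_{S})$ denotes the complete Bell polynomial of degree $|S|$ in the variables $\mathcal{E}^{*}_{a}(u_{S})$; then \eqref{eqnTngwpureweightstructure} reads $T_{n}^{\Omega_{\mathrm{GW}}}=\sum_{j\in[n]}P([n]\setminus\{j\})$. Since $\mathcal{E}^{*}_{a}$ is elliptic for $a\geq2$ by \eqref{eqnEm*2formula} while $\mathcal{E}^{*}_{1}(qu)=\mathcal{E}^{*}_{1}(u)-1$ by \eqref{eqnEm*1formula}, the substitution $u_{1}\mapsto qu_{1}$ only shifts the first Bell variable of any block $S$ containing $1$. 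Applying \eqref{eqnbinomialBell} with the second argument the shift sequence $(-1,0,0,\dots)$, for which the Bell polynomial equals $(-1)^{b}$, gives
\[
\mathbold{B}_{|S|}(qu_{S})=\sum_{a=0}^{|S|}\binom{|S|}{a}(-1)^{|S|-a}\mathbold{B}_{a}(u_{S}).
\]

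First I would note that the terms with $j=1$ are independent of $u_{1}$ and hence unchanged, so the difference $\Delta:=T_{n}^{\Omega_{\mathrm{GW}}}(qu_{1},\dots)-T_{n}^{\Omega_{\mathrm{GW}}}(u_{1},\dots)$ receives contributions only from $j\neq1$, and within each such term only from the unique block $S\ni1$. Reorganizing the sum as $\sum_{S\ni1}\sum_{j\in[n]\setminus S}$, writing $b=|S|-a$ for the number of degrees lost, and separating the extreme term $b=|S|$ (which produces the constant $\mathbold{B}_{0}=1$) from the rest, I obtain $\Delta=\Delta_{<}+\Delta_{=}$ with
\[
\Delta_{<}=\sum_{S\ni1}\sum_{b=1}^{|S|-1}\frac{1}{|S|}\binom{|S|}{b}(-1)^{b}\,\mathbold{B}_{|S|-b}(u_{S})\sum_{j\in[n]\setminus S}P([n]\setminus S\setminus\{j\}),
\]
\[
\Delta_{=}=\sum_{S\ni1}\frac{(-1)^{|S|}}{|S|}\sum_{j\in[n]\setminus S}P([n]\setminus S\setminus\{j\}).
\]

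Next I would expand the right-hand side and match. Setting $S_{0}=\{1\}\cup I$, each summand is $(-1)^{|S_{0}|-1}T_{n-|I|}$ with merged variable $u_{\ast}=u_{S_{0}}$; expanding this $T$ via \eqref{eqnTngwpureweightstructure} over $\{\ast\}\cup([n]\setminus S_{0})$ splits it according to whether $\ast$ is the omitted point or lies in a block $R=\{\ast\}\cup R'$ (whose Bell argument is then $u_{S_{0}}u_{R'}=u_{S_{0}\cup R'}$). The ``$\ast$ in a block'' part, reorganized by the effective block $S:=S_{0}\cup R'\ni1$ and $b:=|S_{0}|-1=|I|$, matches $\Delta_{<}$ once I use the elementary identity $\frac{1}{|S|}\binom{|S|}{b}=\binom{|S|-1}{b}\frac{1}{|S|-b}$ to split $\Delta_{<}$'s coefficient into the block factor $\mathbold{B}_{|S|-b}(u_{S})/(|S|-b)$, the sign $(-1)^{|S_{0}|-1}$, and the count $\binom{|S|-1}{b}$ of decompositions $S\setminus\{1\}=I\sqcup R'$ with $|I|=b$. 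The ``$\ast$ omitted'' part equals $\sum_{S_{0}\ni1,\,|S_{0}|\geq2}(-1)^{|S_{0}|-1}P([n]\setminus S_{0})$, which matches $\Delta_{=}$ after the reindexing $S_{0}=S\cup\{j\}$: for fixed $S_{0}$ there are exactly $|S_{0}|-1$ admissible $j\in S_{0}\setminus\{1\}$, each weighted $(-1)^{|S_{0}|-1}/(|S_{0}|-1)$, and $\sum_{j\in S_{0}\setminus\{1\}}\frac{1}{|S_{0}|-1}=1$ collapses these to $(-1)^{|S_{0}|-1}$. Combining the two matchings gives the claim, the restriction $I\subseteq[n]\setminus\{1\}$ being automatic from $1\in S_{0}$.

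I expect the main obstacle to be precisely this bookkeeping: keeping straight the three roles of the elements of $S$ (the distinguished point $1$, the merged set $I$, and the extra block-mates $R'$) and verifying that binomial multiplicities and signs agree on the two sides through the identities $\frac{1}{|S|}\binom{|S|}{b}=\binom{|S|-1}{b}\frac{1}{|S|-b}$ and $\sum_{j\in S_{0}\setminus\{1\}}\frac{1}{|S_{0}|-1}=1$. The analytic input is immediate from \eqref{eqnEm*1formula}--\eqref{eqnEm*2formula}; everything else is the combinatorial reorganization above, which I would sanity-check against the $n=2,3$ evaluations before writing it in full.
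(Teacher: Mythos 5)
Your proof is correct and takes essentially the same route as the paper: the paper likewise derives the shift identity for $\mathbold{B}_{n}/n$ from the binomial relation \eqref{eqnbinomialBell} together with $\mathcal{E}_{1}^{*}(qu)=\mathcal{E}_{1}^{*}(u)-1$ (ellipticity of $\mathcal{E}_{m}^{*}$, $m\geq 2$), splits the sum according to whether $j=1$ and isolates the block containing $1$, and then concludes by exactly the combinatorial matching you carry out. Your explicit $\Delta_{<}$/$\Delta_{=}$ bookkeeping (with the identity $\frac{1}{|S|}\binom{|S|}{b}=\binom{|S|-1}{b}\frac{1}{|S|-b}$ and the collapse over $j\in S_{0}\setminus\{1\}$) just spells out the step the paper compresses into the phrase ``the set with cardinality $a$ corresponds to the set $I$.''
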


\begin{proof}
Using \eqref{eqnbinomialBell}, we see that for $n\geq 1$ one has
\begin{eqnarray*}
{\mathbold{B}_{n}(\mathcal{E}_{1}^{*}, \cdots, \mathcal{E}_{n}^{*})\over n}(qu)
&=&{\mathbold{B}_{n}(\mathcal{E}_{1}^{*}-1, \cdots, \mathcal{E}_{n}^{*})\over n}(u)\\
&=&{1\over n}\sum_{a=0}^{n}{n\choose a}
\mathbold{B}_{a}(-1, \cdots, 0)(u)\cdot\mathbold{B}_{n-a}(\mathcal{E}_{1}^{*}, \cdots, \mathcal{E}_{n}^{*})(u)\\
&=&{1\over n}\sum_{a=0}^{n}{n\choose a}(-1)^{a}\cdot  \mathbold{B}_{n-a}(\mathcal{E}_{1}^{*}, \cdots, \mathcal{E}_{n}^{*})(u)\,.
\end{eqnarray*}
Simplifying, this gives
\begin{equation}\label{eqnBnnquasiellipticity}
{\mathbold{B}_{n}\over n}(qu)={\mathbold{B}_{n}\over n}(u)+\sum_{a=1}^{n-1}{n-1\choose a}(-1)^{a}{\mathbold{B}_{n-a}\over n-a}(u)+{(-1)^{n}\over n}  \,,\quad n\geq 1\,.
\end{equation}
From \eqref{eqnTngwpureweightstructure}, we have
\begin{eqnarray*}
&&
T_{n}^{\Omega_{\mathrm{GW}}}(qu_1,\cdots, u_n)\\
&=& \sum_{j=1}\sum_{\pi\in \Pi_{[n]\setminus \{j\}}} {\mathbold{B}_{\pi}\over |\pi|}(qu_1,\cdots, u_n)
+ \sum_{j\neq 1}\sum_{\pi\in \Pi_{[n]\setminus \{j\}}} {\mathbold{B}_{\pi}\over |\pi|}(qu_1,\cdots, u_n)
\\
&=& \sum_{j=1}\sum_{\pi\in \Pi_{[n]\setminus \{j\}}} {\mathbold{B}_{\pi}\over |\pi|}(u_1,\cdots, u_n)
+ \sum_{j\neq 1}\sum_{\pi\in \Pi_{[n]\setminus \{j\}}} \prod_{k:\,k\neq *}{\mathbold{B}_{\pi_{k}}\over |\pi_{k}|}(u_1,\cdots, u_n)\cdot
{\mathbold{B}_{\pi_{*}}\over |\pi_{*}|}(qu_1,\cdots, u_n)\,,
\end{eqnarray*}
where $\pi_{*}$ is the block in the partition $\pi$ that contains $1$.
Recall that the variable of ${\mathbold{B}_{\pi_{k}}\over |\pi_{k}|}$ is $u_{\pi_{k}}=\prod_{i\in \pi_{k}}u_{i}$.
Plugging \eqref{eqnBnnquasiellipticity} into the above expression, we obtain
\begin{eqnarray*}
&&
T_{n}^{\Omega_{\mathrm{GW}}}(qu_1,\cdots, u_n)-T_{n}^{\Omega_{\mathrm{GW}}}(u_1,\cdots, u_n)\\
&=&
\sum_{j\neq 1}\sum_{\pi\in \Pi_{[n]\setminus \{j\}}} \prod_{k:\,k\neq *}{\mathbold{B}_{\pi_{k}}\over |\pi_{k}|}(u_1,\cdots, u_n)\cdot
\left(
\sum_{a=1}^{|\pi_{*}|-1}{|\pi_{*}|-1\choose a}(-1)^{a}{\mathbold{B}_{|\pi_{*}|-a}\over |\pi_{*}|-a}(u_{\pi_{*}})+{(-1)^{|\pi_{*}|}\over |\pi_{*}|}
\right)\,\\
&=& \sum_{I:\, \emptyset \subsetneq I\subseteq [n]\setminus \{1\}}(-1)^{I}T_{n-|I|}(u_1u_{I},\cdots, \check{u_{i}},\cdots)\,,
\end{eqnarray*}
Here  the second equality follows from the combinatorial explanation
in which the set with cardinality $a$ corresponds to the set $I$.

\end{proof}

\begin{rem}\label{remSreg}
The result \eqref{eqnBnnquasiellipticity} on the quasi-ellipticity of $\mathbold{B}_{n}$
can be alternatively derived based on the quasi-ellipticity of the Szeg\"o kernel $S(z,c)$.
In fact, from \eqref{eqnautomorphyfactoroftheta} and  \eqref{eqndfnSzegoc} we have
\[
S(z+\pii \tau,c)=e^{-c}S(z,c)\,.
\]
This gives
\begin{eqnarray*}
(m-1)!\cdot [c^{m-1} ] S(z+\pii \tau,c)
&=&(m-1)!\cdot [c^{m} ] \left(e^{-c} \cdot cS(z,c)\right)\\
&=&(m-1)! \left(\sum_{a+b=m,\,a\geq 1} [c^{a}] (cS(z,c)) {(-1)^{b}\over b!}+{(-1)^{m}\over m!}\right)\,.
\end{eqnarray*}
From the relation ${\mathbold{B}_{m}\over m}=(m-1)!\cdot [c^{m-1}]S(z,c), m\geq  1$ given in \eqref{eqnwLaurentcoefficientofS2},
we obtain \eqref{eqnBnnquasiellipticity}.
Note that since only those with $m\geq  1$
appear, we can replace $S(z,c)$ by the following regular function in $c$
\[
S^{\mathrm{reg}}(z,c)=S(z,c)-{1\over c}\,.
\]
This then gives
\begin{equation}\label{eqnBnSreg}
{\mathbold{B}_{m}\over m}=\partial_{c}^{m-1}S^{\mathrm{reg}}(z,0)\,.
\end{equation}

\end{rem}

One can also track the dependence in the quasi-modular form $\mathbb{G}_2$
through their occurrences in $\mathcal{E}_{m}^{*}$.
We apply the following rule that measures the quasi-modularity exhibited in  \eqref{eqnEm*1formula}
\begin{equation}\label{eqnpartialEm*}
\partial_{\mathbb{G}_2} \mathcal{E}_{1}^{*}=-2z \,,\quad
\partial_{\mathbb{G}_2} \mathcal{E}_{m}^{*}=0\,,~m\geq 2\,.
\end{equation}
This leads to another proof of the following modular anomaly structure proved\footnote{This also follows from the holomorphic limit of \cite[Theorem 3.1]{Li:2022regularized}, with Theorem \ref{thmTn=Acycleintegralintro} guaranteed.} in \cite[Lemma 4.2.2]{Pixton:2008} using the determinant formula
 \eqref{eqngeneratingseriesformulaintro}.
\begin{cor}
For any $n\geq 1$, one has
\[
\partial_{\mathbb{G}_2}T_{n}^{\Omega_{\mathrm{GW}}}=-2\sum_{i<j}(z_i+z_j)T_{n-1}^{\mathrm{GW}}(z_{i}+z_{j},\cdots, \check{z}_{i},\cdots \check{z}_{j},\cdots)\,.
\]
\end{cor}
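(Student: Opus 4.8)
The plan is to differentiate the pure-weight formula \eqref{eqnTngwpureweightstructure} termwise and then reorganize the result by a pair-merging bijection. First I would isolate the $\mathbb{G}_2$-dependence of the right-hand side of \eqref{eqnTngwpureweightstructure}: since $\mathcal{E}_a^*$ is elliptic, hence modular, for $a\geq 2$ (for instance $\mathcal{E}_2^*=-\wp$), the only $\mathbb{G}_2$-dependent generator entering the Bell polynomials $\mathbold{B}_{\pi_k}$ is $\mathcal{E}_1^*=\theta'/\theta=\zeta(z)-2z\mathbb{G}_2$, and the rule \eqref{eqnpartialEm*} gives $\partial_{\mathbb{G}_2}\mathcal{E}_1^*(z_S)=-2z_S$ for a block with variable-sum $z_S=\sum_{i\in S}z_i$. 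Combining this with the complete-Bell identity $\partial\mathbold{B}_m/\partial x_1=m\,\mathbold{B}_{m-1}$ (immediate from the generating-function definition, or equivalently from $\mathbold{B}_m/m=\partial_c^{m-1}S^{\mathrm{reg}}(z,0)$ in \eqref{eqnBnSreg} together with $\partial_{\mathbb{G}_2}S^{\mathrm{reg}}=-2z(1+cS^{\mathrm{reg}})$), I obtain the building-block rule
\[
\partial_{\mathbb{G}_2}\frac{\mathbold{B}_m}{m}(z_S)=-2z_S\,\mathbold{B}_{m-1}(z_S),\qquad m\geq 1.
\]

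Next I would apply the Leibniz rule to $T_n^{\Omega_{\mathrm{GW}}}=\sum_{j\in[n]}\sum_{\pi\in\Pi_{[n]\setminus\{j\}}}\prod_k\frac{\mathbold{B}_{\pi_k}}{|\pi_k|}$. Differentiation produces a triple sum indexed by an excluded index $j$, a partition $\pi$ of $[n]\setminus\{j\}$, and a distinguished block $B_0\in\pi$, with value $-2z_{B_0}\,\mathbold{B}_{|B_0|-1}(z_{B_0})\prod_{B\neq B_0}\frac{\mathbold{B}_{|B|}}{|B|}(z_B)$, so that the distinguished block carries a Bell polynomial of degree one lower than its size while keeping the full argument $z_{B_0}$.

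The heart of the argument is to match this with $-2\sum_{i<j}(z_i+z_j)\,T_{n-1}^{\mathrm{GW}}(z_i+z_j,\ldots,\check{z}_i,\ldots,\check{z}_j,\ldots)$, i.e.\ with the family of $T_{n-1}$'s obtained by merging an unordered pair $\{i,j\}$ into a single variable $*$ carrying $z_*=z_i+z_j$. I would build a weight-preserving correspondence in two cases. A distinguished block $B_0$ with $|B_0|\geq 2$ is matched with the merges of pairs $\{p,q\}\subseteq B_0$: each produces the shrunk block $C_0=(B_0\setminus\{p,q\})\cup\{*\}$ of size $|B_0|-1$ with $z_{C_0}=z_{B_0}$, the excluded index being unchanged, and the pointwise identity $\sum_{\{p,q\}\subseteq B_0}(z_p+z_q)=(|B_0|-1)z_{B_0}$ converts $\sum_{\{p,q\}}(z_p+z_q)\frac{\mathbold{B}_{|B_0|-1}}{|B_0|-1}(z_{B_0})$ into $z_{B_0}\mathbold{B}_{|B_0|-1}(z_{B_0})$, reproducing the term above. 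A distinguished singleton $B_0=\{a\}$ (where $\mathbold{B}_0=1$) is matched instead with the merges in which $*$ becomes the excluded index of $T_{n-1}$; here the partner of $a$ is the old excluded index $j$, and splitting $z_i+z_j$ over its two summands shows that each $T_{n-1}$-term of this type is produced by exactly two singleton contributions.

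The step I expect to be the hardest is precisely this bookkeeping, since the distinguished index of the $T_n$-formula must be threaded consistently through both cases (when $*$ lands inside a block versus when $*$ is itself excluded), and one must verify that every merged $T_{n-1}$-term is produced exactly once with the correct rational weight. As cross-checks I would confirm the identity in low rank from Example \ref{exTnn=12cases} and Example \ref{exTn3case}, and observe that the same binomial Bell identity \eqref{eqnbinomialBell} that drives the proof of Corollary \ref{corTngwdifferenceequation} governs how a marked element splits a block here, so that the modular-anomaly statement is the infinitesimal counterpart of the elliptic difference equation already established.
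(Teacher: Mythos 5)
Your proposal is correct and follows essentially the same route as the paper's proof: the building-block rule $\partial_{\mathbb{G}_2}\mathbold{B}_m=-2z\,m\,\mathbold{B}_{m-1}$ derived from \eqref{eqnpartialEm*}, the rewriting of the factor $|\pi_k|-1$ as a sum over pairs merged inside the distinguished block via $\sum_{\{p,q\}\subseteq B_0}(z_p+z_q)=(|B_0|-1)z_{B_0}$, and the matching of singleton blocks with merges into the excluded index. Your explicit bookkeeping of the singleton case (each merged term with $*$ excluded arising from exactly two contributions, $-2z_i$ and $-2z_j$) is a welcome elaboration of a step the paper treats tersely.
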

\begin{proof}
From \eqref{eqnpartialEm*} and Theorem \ref{thmSzegokernel} (iii) or the combinatorial interpretations  \eqref{eqnwLaurentcoefficientofS2} of the complete Bell polynomials, we see that
\[
\partial_{\mathbb{G}_2}\mathbold{B}_{m}=
-2z \cdot m\cdot \mathbold{B}_{m-1}\,,\quad m\geq 1\,.
\]
Hence, when $|\pi_{k}|\geq 2$ one has
\begin{eqnarray*}
\partial_{\mathbb{G}_2}{\mathbold{B}_{|\pi_{k}|}\over |\pi_{k}|}(\sum_{i\in \pi_{k}}z_{i})
&=&
-2 \sum_{i\in \pi_{k}}z_{i}\cdot    (|\pi_{k}|-1) {\mathbold{B}_{|\pi_{k}|-1}\over |\pi_{k}|-1}(\sum_{i\in \pi_{k}}z_{i})\\
&=&
-2 \sum_{i\in \pi_{k}}z_{i}\cdot \sum_{i'\in\pi_{k}:\,i'\neq i}
{\mathbold{B}_{|\pi_{k}|-1}\over |\pi_{k}|-1}(\sum_{i\in \pi_{k}}z_{i})\,.
\end{eqnarray*}
Each term $\sum_{i'\in\pi_{k}:\,i'\neq i}
{\mathbold{B}_{|\pi_{k}|-1}\over |\pi_{k}|-1}(\sum_{i\in \pi_{k}}z_{i})$
above corresponds to the contribution that arises from first combining the two elements $i,i'$
in $\pi_{k}$ into a single element then summing over partitions of the resulting new set.
It follows that for $|\pi_{k}|\geq 2$
\[
\partial_{\mathbb{G}_2}{\mathbold{B}_{|\pi_{k}|}\over |\pi_{k}|}(\cdots, z_{i},\cdots, z_{i'},\cdots )
=
-2 \sum_{\{i,i'\}\subseteq  \pi_{k}}(z_{i}+z_{i'})
{\mathbold{B}_{|\pi_{k}|-1}\over |\pi_{k}|-1}(z_{i}+z_{i'},\cdots, \check{z}_{i},\cdots, \check{z}_{i'},\cdots )\,.
\]
When $|\pi_{k}|=1$, the contribution is the $-2z_{i}$ multiple of the contribution $1$ assigned to the set
$\{z_{i}+z_{j}\}$ where $j\in [n]\setminus \cup_{k}\pi_{k}$ in \eqref{eqnTngwpureweightstructure}.
Combining these, we obtain the desired claim.

\end{proof}

\subsubsection{Relation between $T_{n}$ and $T_{n}^{\Omega_{\mathrm{GW}}}$: difference equation and singular behavior}\label{secTnTgw}

The GW generating series $T_{n}$ in \eqref{eqndefTn} satisfy a few properties as follows.
One has $T_1=1$ and
\begin{itemize}
\item difference equation  \cite[Theorem 8.1, Corollary 10.2]{Bloch:2000}
\begin{eqnarray}\label{eqnTndifferenceequation}
T_{n}(qu_1,\cdots, u_n)
&=&
T_{n}(u_1,\cdots, u_n)\nonumber\\
&+&\sum_{I=\{i_1,\cdots, i_s\}\subseteq [n]\setminus \{1\}} (-1)^{|I|} T_{n-|I|}(u_1 u_{I}, \cdots,\check{u}_{i_1},\cdots, \check{u}_{i_s},\cdots)\,,
\end{eqnarray}
where a notation such as $\check{u}_{i_1}$ means the variable $u_{i_1}$ is omitted.
In addition, $T_{n}$ is symmetric under  the $\mathfrak{S}_{n}$ action on $u_{i},i=1,2,\cdots, n$.
\item
singular behavior \cite[Theorem 11.1, Equation 11.2,  Equation 11.3]{Bloch:2000}
\begin{equation}\label{eqnTnsimplepole}
T_{n}(u_1,\cdots, u_n)={T_{n-1}(u_2,\cdots, u_n)\over u_1-1}+\mathrm{regular ~terms}\,,\quad u_1\rightarrow 1\,,
\end{equation}
and $T_{n}(u_1,\cdots, u_n)$ is regular on the divisor $u_1 u_2\cdots u_{k}=1, k\geq 2$.
\end{itemize}
See also  \cite[Proposition 1, Proposition 2, Theorem 5, Theorem 6]{Okounkov:2001}
for representation-theoretic proofs of these properties.\\

From the viewpoint of recursively solving the system, the difference equation and singular behavior   determine the function $T_{n}$ up to constant, which is in turn fixed by the vanishing condition
 \cite[Theorem 11.2, Theorem 11.3, Equation 11.3]{Bloch:2000}
 \begin{equation}\label{eqnTnvanishingcondition}
T_{n}|_{u_1\cdots u_{n}=1}=0\,,\quad n\geq 2\,.
\end{equation}

We now provide a proof that $T_{n}=T_{n}^{\Omega_{\mathrm{GW}}}$
as follows.
\begin{prop}\label{propTn=Tngw}
Let the notation be as above. We have
\[
T_{n}=T_{n}^{\Omega_{\mathrm{GW}}}\,.
\]
\end{prop}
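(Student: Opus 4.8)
The plan is an induction on $n$ organized around the uniqueness statement recalled above: a function symmetric in $u_1,\dots,u_n$ that obeys the difference equation \eqref{eqnTndifferenceequation}, the singular behavior \eqref{eqnTnsimplepole}, and the vanishing \eqref{eqnTnvanishingcondition} is uniquely determined \cite{Bloch:2000}. The base case is \eqref{eqnG1}, which gives $T_1^{\Omega_{\mathrm{GW}}}=1=T_1$. Assuming $T_m=T_m^{\Omega_{\mathrm{GW}}}$ for $m<n$, it suffices to verify the three properties for $T_n^{\Omega_{\mathrm{GW}}}$. Symmetry is immediate from \eqref{eqnTngwpureweightstructure} (or Proposition \ref{propGWpairing=bindpairing}). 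The difference equation is exactly Corollary \ref{corTngwdifferenceequation}, whose inhomogeneous term involves the lower series $T_{n-|I|}^{\Omega_{\mathrm{GW}}}$; by the induction hypothesis these coincide with $T_{n-|I|}$, so the equation matches \eqref{eqnTndifferenceequation} term by term.

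For the singular behavior I would argue directly from \eqref{eqnTngwpureweightstructure}. The crucial local fact is that the building block $\mathbold{B}_m/m$ is holomorphic at the origin for $m\ge 2$ and has only a simple pole for $m=1$: by \eqref{eqnBnSreg} one has $\mathbold{B}_m/m=\partial_c^{m-1}S^{\mathrm{reg}}(z,0)$ with $S^{\mathrm{reg}}=S-1/c$, and since the Szeg\"o kernel \eqref{eqndfnSzegoc} satisfies $\Res_{z=0}S(z,c)=1$ independently of $c$, the simple $z$-pole is annihilated by $\partial_c^{m-1}$ once $m\ge 2$. Hence in \eqref{eqnTngwpureweightstructure} the only poles come from singleton blocks $\{i\}$, each contributing $\theta'/\theta(u_i)=\frac{1}{u_i-1}+\mathrm{reg}$. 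Gathering all terms in which $\{1\}$ is a singleton block reassembles $\theta'/\theta(u_1)\cdot T_{n-1}^{\Omega_{\mathrm{GW}}}(u_2,\dots,u_n)$, yielding the simple pole \eqref{eqnTnsimplepole} with residue $T_{n-1}=T_{n-1}^{\Omega_{\mathrm{GW}}}$; and on a deeper diagonal $u_1\cdots u_k=1$ with $k\ge 2$ the only block whose argument is pinned to a lattice point is $\{1,\dots,k\}$, of size $\ge 2$, hence regular there. This gives both halves of the required singular behavior.

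The vanishing \eqref{eqnTnvanishingcondition} is the crux, since it genuinely singles out $\Omega_{\mathrm{GW}}$: for $\Omega_{\mathrm{Wick}}$ the diagonal value is nonzero already at $n=2$ (Example \ref{exTnn=12cases}). Once the previous two properties hold, $D_n:=T_n^{\Omega_{\mathrm{GW}}}-T_n$ is $A$-periodic (both sides are, the $\mathcal{E}_m^*$ being $A$-periodic), $B$-periodic (the inhomogeneous terms cancel by induction), and holomorphic (the principal parts match), hence a constant in $z$; so it remains to show this constant is $0$. The involution $z\mapsto-z$ preserves $\{\sum_i z_i=0\}$ and, by the weighted homogeneity $\mathbold{B}_m(-z)=(-1)^m\mathbold{B}_m(z)$ applied blockwise in \eqref{eqnTngwpureweightstructure}, gives $T_n^{\Omega_{\mathrm{GW}}}(-z)=(-1)^{n-1}T_n^{\Omega_{\mathrm{GW}}}(z)$; as $D_n$ is constant this forces $D_n=0$ for even $n$. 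For odd $n$, since $D_n$ is constant it is enough to evaluate $T_n^{\Omega_{\mathrm{GW}}}$ at a single convenient point of the diagonal, which I would do from \eqref{eqnTngwpureweightstructure} using the Szeg\"o generating function $\sum_{m\ge 1}\frac{\mathbold{B}_m}{m!}t^m=\frac{t\,\theta(z+t)}{\theta(z)\theta(t)}-1$ together with the theta addition theorem to force the cancellation, mirroring the genus-zero identity \eqref{eqnhomogeneousadditionformula} that disposes of $n=3$ in Example \ref{exTn3case}; equivalently, via the factorization $T_n^{\Omega_{\mathrm{GW}}}=\theta(\prod_k u_k)\cdot[v^0]\Theta_{2n}$ valid in $\Omega_{\mathrm{GW}}$, the claim becomes regularity of the ordered $A$-cycle integral $[v^0]\Theta_{2n}$ as $\prod_k u_k\to 1$, the contour pinching responsible for the pole under $\Omega_{\mathrm{Wick}}$ being absent for the balanced ordering $\Omega_{\mathrm{GW}}$. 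With \eqref{eqnTnvanishingcondition} verified, uniqueness yields $T_n=T_n^{\Omega_{\mathrm{GW}}}$ and closes the induction. The vanishing step is where the real work lies; the first two properties are essentially bookkeeping on \eqref{eqnTngwpureweightstructure}.
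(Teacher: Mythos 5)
Your overall architecture---uniqueness from the difference equation, the singularity conditions, and one normalization---is the same as the paper's, and your first two verifications coincide with the paper's proof: the difference equation is Corollary \ref{corTngwdifferenceequation}, and the pole analysis from \eqref{eqnTngwpureweightstructure} (simple poles only from singleton blocks, with ${\mathbold{B}_{|I|}/|I|}$ regular along $u_{I}=1$ for $|I|\geq 2$ by \eqref{eqnwLaurentcoefficientofS}) is exactly the paper's argument for \eqref{eqnTnsimplepole}. The genuine gap is your normalization step for odd $n$. Your reduction of $D_{n}=T_{n}^{\Omega_{\mathrm{GW}}}-T_{n}$ to a constant is sound, and the parity argument disposes of even $n$ (though it silently also needs $T_{n}(-z)=(-1)^{n-1}T_{n}(z)$, which one should extract from \eqref{eqnTnintermsofsumovercompositions}); but for odd $n$ you offer only a plan. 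The reformulation via $T_{n}^{\Omega}=\theta(\prod_{k}u_{k})\cdot[v^{0}]\Theta_{2n}$ is legitimate, yet the assertion that the contour pinching producing a pole of $[v^{0}]\Theta_{2n}$ along $\prod_{k}u_{k}=1$ is ``absent for the balanced ordering'' is precisely the delicate point, not a proof: the paper warns immediately after Example \ref{exTnn=12cases} that $[v^{0}]$ does not commute with evaluation on $\prod_{k}u_{k}=1$, and Remark \ref{remsecondcheck} explicitly leaves a direct verification of \eqref{eqnTnvanishingcondition} from the sum-over-partitions formula as plausible but not carried out beyond the $n=3$ computation of Example \ref{exTn3case}. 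As written, your induction does not close for odd $n\geq 5$, and closes at $n=3$ only by importing that example's explicit addition-formula manipulations.

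The paper sidesteps this entirely by changing the normalization: instead of the diagonal vanishing \eqref{eqnTnvanishingcondition}, the recursive constant is fixed by demanding that the solution be quasi-elliptic of \emph{pure} weight. You already hold every ingredient for this route. Your $D_{n}$ is a constant; $T_{n}^{\Omega_{\mathrm{GW}}}$ has pure weight $n-1$ by Theorem \ref{thmTngwintermsofB}; and $T_{n}$ has pure weight $n-1$ as well, directly from \eqref{eqnTnintermsofsumovercompositions}, since each summand there has weight $(|\gamma_{1}|-1)+\sum_{k\geq 2}|\gamma_{k}|=n-1$. A nonzero constant has weight $0\neq n-1$ for $n\geq 2$, so $D_{n}=0$ for all $n$ at once, with no case split and no diagonal evaluation. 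So the step you yourself flag as ``where the real work lies'' is indeed the unproven one, and the one-line weight argument is the intended replacement for it.
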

\begin{proof}
 Observe that besides the vanishing condition \eqref{eqnTnvanishingcondition}, the sequence of constants can also be recursively fixed by imposing the condition that $T_{n},n\geq 1$ are quasi-elliptic functions of pure weight. This condition has been established in Theorem \ref{thmTngwintermsofB}. Therefore to check $T_{n}=T_{n}^{\Omega_{\mathrm{GW}}}$ it is enough to prove \eqref{eqnTndifferenceequation}, \eqref{eqnTnsimplepole} are satisfied by 
 $T_{n}^{\Omega_{\mathrm{GW}}}$.

The difference equation \eqref{eqnTndifferenceequation} is nothing but the quasi-ellipticity of $T_{n}^{\Omega_{\mathrm{GW}}}$ given in
 Corollary
\ref{corTngwdifferenceequation}.
We now prove the condition
 \eqref{eqnTnsimplepole}. By \eqref{eqnwLaurentcoefficientofS2}, $\mathbold{B}_{m}(u)$ is a polynomial in ${\theta^{(a)}\over \theta}$, it follows from \eqref{eqnTngwpureweightstructure} that the possible singularities of $T_{n}$ are at worst simple poles along the divisors
   $u_{I}:=\prod_{i\in I}u_{i}=1\,, I\subsetneq [n]$.
   The formula \eqref{eqnTngwpureweightstructure}  also gives the desired simple pole structure \eqref{eqnTnsimplepole} of $T_{n}$ on $u_i=1$.
  For any divisor  $u_{I}=1\,, I\subsetneq [n]$ with $|I|>1$,
  the terms in \eqref{eqnTngwpureweightstructure}  which are possibly singular along this divisor
  arise from the term $B_{I}/|I|$.
  That it is regular along $u_{I}=1$ follows from \eqref{eqnwLaurentcoefficientofS}.

\end{proof}

\begin{rem}\label{remsecondcheck}
   As shown in  \cite[Section 10, Theorem 11.2]{Bloch:2000} using the recursive structure
in the expression \eqref{eqngeneratingseriesformulaintro} for $F_{n}$ (\cite[Equation 7.7]{Bloch:2000}), the vanishing condition
 \eqref{eqnTnvanishingcondition} actually implies
\eqref{eqnTnsimplepole} once \eqref{eqnTndifferenceequation} is granted. Furthermore, \eqref{eqnTnvanishingcondition}
can be weakened to
\begin{equation}\label{eqnTnvanishingconditionweakened}
T_{n}|_{u_1\cdots u_{n}=1}~\mathrm{is~regular}\,, \quad\mathrm{or}\quad
T_{n}|_{u_1\cdots u_{n}=1}\rightarrow 0~\mathrm{as}~u_1\rightarrow 1\,.
\end{equation}
Applying the same reasoning there to the formula \eqref{eqnTngwpureweightstructure} for $T_{n}^{\Omega_{\mathrm{GW}}}$, to check
$T_{n}=T_{n}^{\Omega_{\mathrm{GW}}}$ it is enough to prove  \eqref{eqnTndifferenceequation}
and any of \eqref{eqnTnvanishingcondition}, \eqref{eqnTnvanishingconditionweakened}.
It should be possible to prove
\eqref{eqnTnvanishingcondition} or
 \eqref{eqnTnvanishingconditionweakened}
based on the sum-over-partitions formula \eqref{eqnTngwpureweightstructure} in a way
similar to the reasoning of  \cite[Theorem 11.3]{Bloch:2000}, as we did in Example \ref{exTn3case}.
\end{rem}

We can finally prove the main result Theorem \ref{thmTn=Acycleintegralintro}, which is recalled below for convenience.

\begin{thm}\label{thmTn=Acycleintegral}
	Following the notation as above,
	let the linear coordinates on $\mathsf{J}_{[n]}$ be $z_1,\cdots, z_{n}$ respectively, and $q=e^{\pii\tau}$.
Then one has
	\begin{equation}\label{eqnTnintermsofintergral}
	T_{n}(z_1,\cdots, z_{n};q)
	=\Big \langle ~\gamma_{n}^{\mathrm{GW}}\,, ~ \varphi_{n}^{\mathrm{GW}} ~\Big\rangle \,.
	\end{equation}
\end{thm}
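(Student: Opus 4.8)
The plan is to assemble Theorem \ref{thmTn=Acycleintegral} from the three structural results already established, since each side of \eqref{eqnTnintermsofintergral} has been independently identified with the intermediate quantity $T_{n}^{\Omega_{\mathrm{GW}}}$. In effect the final statement is a bookkeeping corollary of the machinery developed in Sections \ref{secorderedAcycleintegralofC2n} and \ref{secquasielliptic}, so I would keep the argument short and point to where the genuine work lives.

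First I would rewrite the right-hand side of \eqref{eqnTnintermsofintergral} as an ordered $A$-cycle integral. By Proposition \ref{propGWpairing=bindpairing}(ii), the geometrically defined pairing $\langle \gamma_{n}^{\mathrm{GW}}, \varphi_{n}^{\mathrm{GW}} \rangle$ agrees (after analytic continuation) with $\langle \gamma_{n}^{\Omega_{\mathrm{GW}}}, \varphi_{n}^{\mathrm{GW}} \rangle$, the pairing against the explicit cycle cut out by the GW ordering $\Omega_{\mathrm{GW}}$ of \eqref{eqndomainoforderedAcycleintegralgw}. Then by \eqref{eqnpairingisintegration} this pairing is exactly the Fourier extraction $T_{n}^{\Omega_{\mathrm{GW}}} = [v^{0}]\bigl(\theta(\prod_{k} u_{k})\,\Theta_{2n}\bigr)$ evaluated in that region. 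Hence the right-hand side of \eqref{eqnTnintermsofintergral} equals $T_{n}^{\Omega_{\mathrm{GW}}}$. Second, I would invoke Proposition \ref{propTn=Tngw}, which gives $T_{n} = T_{n}^{\Omega_{\mathrm{GW}}}$, and concatenate the two identifications to conclude $T_{n} = \langle \gamma_{n}^{\mathrm{GW}}, \varphi_{n}^{\mathrm{GW}} \rangle$.

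The substantive content, and the real obstacle, lies entirely inside Proposition \ref{propTn=Tngw}: one must show that $T_{n}^{\Omega_{\mathrm{GW}}}$ satisfies the same defining system as the Bloch--Okounkov series $T_{n}$. Concretely, the sum-over-partitions expression \eqref{eqnTngwpureweightstructure} of Theorem \ref{thmTngwintermsofB} is shown to obey the difference equation \eqref{eqnTndifferenceequation}, which is precisely the quasi-ellipticity statement of Corollary \ref{corTngwdifferenceequation}, together with the simple-pole and regularity behavior \eqref{eqnTnsimplepole} along the diagonals, which is read off directly from the pure-weight Bell-polynomial form. Since the difference equation and the singular behavior determine $T_{n}$ up to a sequence of additive constants, and those constants are fixed either by the vanishing condition \eqref{eqnTnvanishingcondition} or, as exploited here, by the requirement that each $T_{n}$ be a quasi-elliptic function of pure weight $n-1$ (secured by Theorem \ref{thmTngwintermsofB}), the two functions coincide.

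I expect the delicate step to be tracking that the $[v^{0}]$ extraction in the $\Omega_{\mathrm{GW}}$ region produces exactly the cyclic-descent weighting $g_{|\pi_{k}|,\,\mathrm{cdes}(\tau_{k})}$ of Lemma \ref{lemv0sumoverpartition}, which after the Eulerian and Bernoulli bookkeeping of Lemma \ref{lembuildingblock} collapses to the clean pure-weight form built from the complete Bell polynomials. This collapse is what makes both the pole structure and the difference equation match Bloch--Okounkov on the nose; once it is in hand, the final theorem follows by the chain of equalities above with no further computation.
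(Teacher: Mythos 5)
Your proposal is correct and matches the paper's own proof, which is exactly the same two-step assembly: Proposition \ref{propGWpairing=bindpairing} identifies $\langle \gamma_{n}^{\mathrm{GW}}, \varphi_{n}^{\mathrm{GW}}\rangle$ with $T_{n}^{\Omega_{\mathrm{GW}}}$, and Proposition \ref{propTn=Tngw} (resting on Theorem \ref{thmTngwintermsofB}) gives $T_{n}=T_{n}^{\Omega_{\mathrm{GW}}}$. Your account of where the substantive work lies --- the difference equation of Corollary \ref{corTngwdifferenceequation}, the pole/regularity behavior read from the Bell-polynomial form, and the constants fixed by the pure-weight property --- is precisely how the paper fixes the identification.
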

\begin{proof}
This follows from Lemma \ref{propGWpairing=bindpairing}, Theorem \ref{thmTngwintermsofB}, and  Proposition \ref{propTn=Tngw}.
\end{proof}

\subsubsection{Relation between $T_{n}$ and $T_{n}^{\Omega_{\mathrm{Wick}}}$}

Quite a few results, such as the derivation of the determinant formula \eqref{eqnFaymulti-secant} for $C_{2n}$, are
more conveniently formulated under
the ordering $\Omega_{\mathrm{Wick}}$ instead of $\Omega_{\mathrm{GW}}$, see e.g., \cite[Theorem 1]{Okounkov:2001}.
For this reason, we now relate
$T_{n}^{\Omega_{\mathrm{Wick}}}$ to $T_{n}^{\Omega_{\mathrm{GW}}}=T_{n}$.

In contrast to \eqref{eqnFnaszeromode} in which
\[
T_{n}=\langle \mathbold{T}_{n} \rangle
=[v^{0}] \langle \psi(u_{1}v_{1})\psi^{*}(v_{1})\cdots \psi(u_{n}v_{n})\psi^{*}(v_{n}) \rangle
\,,\quad \,\quad  (u,v)\in \Omega_{\mathrm{GW}}\,,
\]
the
 correlation functions $T_{n}^{\Omega_{\mathrm{Wick}}}$ are defined to be
$\langle \mathbold{T}^{\mathrm{Wick}}\rangle$, where
\[
\mathbold{T}_{n}^{\mathrm{Wick}}=[v^0]\left(
\psi(u_1 v_1)\cdots \psi(u_{n} v_n)\psi^{*}(v_{n})\cdots \psi^{*}(v_{1})\right)\,,\quad (u,v)\in \Omega_{\mathrm{Wick}}\,.
\]
Note that $[\mathbold{T}(u_{i}), \mathbold{T}(u_{j})]=0$, in consistency with
the $\mathfrak{S}_{n}$ symmetry of $T_{n}$ mentioned in Proposition \ref{propGWpairing=bindpairing}.

Similar to \cite[Section 3.2.2]{Okounkov:2001}, 
we can relate
$T_{n}$ and $T_{n}^{\Omega_{\mathrm{Wick}}}$ by showing the following relation on the level of the
corresponding operators $\mathbold{T}_{n}, \mathbold{T}_{n}^{\mathrm{Wick}}$.
\begin{prop}\label{proprelationbetweenoperatorsTnTnA}
Let the notation be as above.
Then one has
\[
\mathbold{T}_{[n]}^{\mathrm{Wick}}=\sum_{\pi\in \Pi_{[n]}} \prod_{k=1}^{\ell}(-1)^{\pi_{k}}\mathbold{T}_{\pi_{k}}(\prod_{i\in \pi_{k}}u_{i})\,.
\]
\end{prop}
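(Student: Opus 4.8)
The plan is to prove this as an operator identity by induction on $n$, peeling off the outermost field pair $\psi(u_1v_1),\psi^{*}(v_1)$ and reducing the whole statement to a single commutation relation between a fermion field and a current. First I would record the relation
\[
\psi(a)\,\mathbold{T}(w)=\mathbold{T}(w)\,\psi(a)-\psi(aw),
\]
which follows directly from the modes: writing $\mathbold{T}(w)=\sum_r w^{r}\psi_{-r}\psi^{*}_{r}$ and using $\{\psi_m,\psi^{*}_r\}=\delta_{m+r}$ one gets $[\psi_m,\mathbold{T}(w)]=-w^{-m}\psi_m$, hence $[\psi(a),\mathbold{T}(w)]=-\psi(aw)$. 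This is a genuine identity of modes, so no choice of expansion region is involved. I would also use the commutativity $[\mathbold{T}(u_i),\mathbold{T}(u_j)]=0$, so that products of currents are order-free, together with the definition $[v^{0}]\bigl(\psi(\tilde u v)\psi^{*}(v)\bigr)=\mathbold{T}(\tilde u)$.

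For the inductive step I would separate the variable $v_1$ and factor the definition as
\[
\mathbold{T}^{\mathrm{Wick}}_{[n]}=[v_1^{0}]\,\psi(u_1v_1)\,\mathbold{T}^{\mathrm{Wick}}_{[n]\setminus\{1\}}\,\psi^{*}(v_1),
\]
the inner operator being exactly the Wick product on the remaining $n-1$ labels. Substituting the inductive expression for $\mathbold{T}^{\mathrm{Wick}}_{[n]\setminus\{1\}}$ as a sum of products of currents and commuting $\psi(u_1v_1)$ to the right through such a product, the relation above shows that at each factor $\mathbold{T}(u_{\pi_k})$ the field either passes through (retaining that current) or is absorbed, deleting the current, multiplying the field argument by $u_{\pi_k}$, and contributing a sign $-1$. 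After crossing the whole product the field emerges as $\psi\bigl(u_1v_1\prod_{k\in A}u_{\pi_k}\bigr)$ for the set $A$ of absorbed blocks, and then $[v_1^{0}](\cdots)\psi^{*}(v_1)$ collapses it into the single current $\mathbold{T}\bigl(u_1\prod_{k\in A}u_{\pi_k}\bigr)$. Thus each partition of $[n]\setminus\{1\}$ together with a choice of $A$ yields a partition of $[n]$ whose block through $1$ is $\{1\}\cup\bigcup_{k\in A}\pi_k$.

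The step I expect to be the main obstacle is the combinatorial bookkeeping of this reassembly together with the signs. A fixed target partition with block $B\ni 1$ is produced once for every way of writing $B\setminus\{1\}$ as the disjoint union of the absorbed blocks, i.e.\ once per set partition of $B\setminus\{1\}$; accounting for these multiplicities is precisely what promotes the naive weight on $B$ to the single-cycle weight $(-1)^{|B|-1}(|B|-1)!$. The sign is uniform across these contributions, since $\prod_{k\in A}(-1)^{|\pi_k|-1}\cdot(-1)^{|A|}=(-1)^{\sum_{k\in A}|\pi_k|}=(-1)^{|B|-1}$, while the non-absorbed blocks keep their inductive weights. Because $(-1)^{|\pi_k|-1}(|\pi_k|-1)!$ is exactly the weight of one $|\pi_k|$-cycle, the outcome is most transparently written as a sum over $\mathfrak{S}_n$, each $\sigma$ contributing $(-1)^{\mathrm{sgn}(\sigma)}\prod_{\text{cycles }C}\mathbold{T}(u_C)$; regrouping by cycle type gives the stated sum over $\Pi_{[n]}$, in which the contribution $\prod_k(-1)^{\pi_k}\mathbold{T}_{\pi_k}(\prod_{i\in\pi_k}u_i)$ of each block is read as the sum over the $(|\pi_k|-1)!$ cyclic arrangements inside it (all producing the same current $\mathbold{T}(u_{\pi_k})$), consistent with the $\tfrac{1}{|\pi_k|}$–conventions used for $G_{\pi_k}$. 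The base case $n=1$ is immediate from $\mathbold{T}^{\mathrm{Wick}}_{\{1\}}=\mathbold{T}(u_1)$, closing the induction.
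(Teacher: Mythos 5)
Your argument is correct and is essentially the paper's own proof run in mirror image: the paper also inducts on $n$, peels off the outermost pair via $\mathbold{T}^{\mathrm{Wick}}_{[n]}=[v_1^{0}]\bigl(\psi(u_1v_1)\,\mathbold{T}^{\mathrm{Wick}}_{[n]\setminus\{1\}}\,\psi^{*}(v_1)\bigr)$, and then lets each current either pass or be absorbed; the only difference is that the paper commutes the currents through the mode $\psi^{*}_{s}$ using $[\mathbold{T}(u),\psi^{*}(v)]=-\psi^{*}(v/u)$ together with the subset-sum formula quoted from Okounkov, whereas you commute $\psi(u_1v_1)$ rightward using $[\psi(a),\mathbold{T}(w)]=-\psi(aw)$. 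Both rest on the same one-term mode computation, the commutativity of the currents, and the collapse $[v_1^{0}]\psi(\tilde u v_1)\psi^{*}(v_1)=\mathbold{T}(\tilde u)$, so this is not a genuinely different route.

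What your write-up does better is precisely the step you flagged as the main obstacle. The paper's proof passes from the sum over pairs $(\pi,K)$ (a partition of $[n+1]\setminus\{1\}$ plus a set $K$ of absorbed blocks) to $\sum_{\pi'\in\Pi_{[n+1]}}(-1)^{\pi'}\mathbold{T}_{\pi'}$ as if $(\pi,K)\mapsto\pi'$ were a bijection; it is not, and you correctly observe that a target block $B\ni 1$ has one preimage for each partition $\rho$ of $B\setminus\{1\}$, all carrying the same sign, so closing the induction requires exactly the identity $\sum_{\rho\in\Pi_{B\setminus\{1\}}}\prod_{j}(|\rho_j|-1)!=(|B|-1)!$ that your cycle-weight bookkeeping supplies. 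This also settles how the statement must be read: already for $n=3$ the recursion produces $+2\,\mathbold{T}(u_1u_2u_3)$ rather than $+\,\mathbold{T}(u_1u_2u_3)$, so the block contribution $(-1)^{\pi_k}\mathbold{T}_{\pi_k}\bigl(\prod_{i\in\pi_k}u_i\bigr)$ must include the factor $(|\pi_k|-1)!$, i.e.\ it is the sum over the cyclic arrangements of $\pi_k$, as you interpret it; equivalently, the clean form of the proposition is $\mathbold{T}^{\mathrm{Wick}}_{[n]}=\sum_{\sigma\in\mathfrak{S}_n}\mathrm{sgn}(\sigma)\prod_{C}\mathbold{T}\bigl(\prod_{i\in C}u_i\bigr)$, the product running over the cycles $C$ of $\sigma$. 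So your proposal is correct, and on the multiplicity step it is more careful than the proof given in the paper.
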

\begin{proof}
We prove this by induction.
It is easy to check by direct computations that
\begin{equation}\label{eqncommutationTpsi}
[\mathbold{T}(u),\psi^{*}(v)]=-\psi^{*}({v\over u})\,,\quad
[v^{0}] \psi(u_{j}v_{j})\psi^{*}({v_{j}\over u_{i}})=\mathbold{T}(u_{i}u_{j})\,.
\end{equation}
Based on these one can show the statement for the $n=2$ case, namely
\[
\mathbold{T}_{2}^{\mathrm{Wick}} =\mathbold{T}(u_1)\mathbold{T}(u_2)-\mathbold{T}(u_1 u_2)\,.
\]
Now assume the statement is true for the $\leq n$ case. We now prove it for the $n+1$ case.

By definition, one has
\begin{eqnarray*}
\mathbold{T}_{[n+1]}^{\mathrm{Wick}}=[v^{0}] \left(\psi(u_{1}v_{1}) \mathbold{T}_{[n+1]\setminus\{1\}}^{(A)}
 \psi^{*}(v_{1})\right)\,.
\end{eqnarray*}
By the induction hypothesis, it follows that
\begin{eqnarray*}
\mathbold{T}_{[n+1]}^{\mathrm{Wick}}=[v^{0}]
\left(\psi(u_{1}v_{1})\sum_{\pi\in \Pi_{[n+1]\setminus\{1\}}} \prod_{k=1}^{\ell}(-1)^{\pi_{k}}\mathbold{T}_{\pi_{k}} \psi^{*}(v_{1})
\right)\,.
\end{eqnarray*}
By the first one of the commutation relations \eqref{eqncommutationTpsi} above (see \cite[Equation 3.11]{Okounkov:2001}), one has
\[
\mathbold{T}(x_{m})\cdots \mathbold{T}(x_{1})\psi_{s}^{*}
=
\sum_{I\subseteq [m]} (-1)^{|I|}\prod_{i\in I}x_{i}^{s}\cdot \psi_{s}^{*}\prod_{i\notin I}\mathbold{T}(x_{i})\,.
\]
Plugging this, one has
\begin{eqnarray*}
\mathbold{T}_{[n+1]}^{\mathrm{Wick}}&=&[v^{0}]
\sum_{r,s}\psi_{r}\psi_{s}^{*}(u_{1}v_{1})^{-r}v_{1}^{-s}
\sum_{\pi\in \Pi_{[n+1]\setminus\{1\}}}(-1)^{\pi}\prod_{K\subseteq [\ell]} (-1)^{|K|} (\prod_{k\in K}u_{\pi_{k}})^{s}\prod_{k\notin K} \mathbold{T}_{\pi_{k}}\\
&=& \sum_{\pi\in \Pi_{[n+1]\setminus\{1\}}}(-1)^{\pi}\prod_{K\subseteq [\ell]} (-1)^{|K|} \mathbold{T}(u_{1}\prod_{k\in K}u_{\pi_{k}}) \prod_{k\notin K} \mathbold{T}_{\pi_{k}}\\
&=& \sum_{\pi'\in \Pi_{[n+1]}}(-1)^{\pi'} \mathbold{T}_{\pi'}\,.
\end{eqnarray*}
This finishes the inductive proof.
\end{proof}

The inverse relation of the one in Proposition \ref{proprelationbetweenoperatorsTnTnA} can be obtained as follows.
Define
\[
\mu: \Pi_{[n]}\rightarrow \mathbb{R}\,,\quad
\pi\mapsto (-1)^{\pi}\prod_{k}(|\pi_{k}|-1)!\,.
\]
Then by the M\"obius inversion formula on the poset $\Pi_{[n]}$ ordered by the refinement relation, we have
\[
(-1)^{n-1}\mathbold{T}_{n}=\sum_{\pi\in \Pi_{[n]}} \mu(\pi)\mathbold{T}_{\pi}^{\mathrm{Wick}}\,.
\]

\subsection{Elliptic and modular completion of $T_{n}$}
\label{secellipticcompletion}

We now discuss the elliptic completion of $T_{n}=T_{n}^{\Omega_{\mathrm{GW}}}$
following the discussions in Section
\ref{secelliptictransformation}.
Standard facts in elliptic function theory (see e.g., \cite{Eichler:1985}) tells that
the elliptic and modular  completion of
$\mathcal{E}_{1}^{*}$ is given by\footnote{
See \cite[Remark 2.4]{Li:2022regularized} for studies about this function and its relevance in
performing regularized integrals, as well as Remark \ref{remoriginofanomaly} in Appendix \ref{appendixorderings} for related discussions on its appearance in correlation functions.}
\[
\widehat{\mathcal{E}_{1}^{*}}=
{\widehat{\theta}'\over \widehat{\theta}}=
{\theta'\over \theta}+\mathbold{A}\,,\quad
\widehat{\theta}=e^{-2\pi {(\mathrm{im}(\, z/\pii))^2\over \mathrm{im}\,\tau}}\theta\,,\quad
\mathbold{A}(z)={\mathrm{im}\,(z/\pii)\over \mathrm{im}\,\tau}\,.
\]

Using the set partition meaning of
\[
{\mathbold{B}}_{m}(\widehat{\mathcal{E}_{1}^{*}},\mathcal{E}_{2}^{*},\cdots ,\mathcal{E}_{m}^{*} )
=\sum_{a=0}^{m}{m\choose a} {\mathbold{B}}_{n-a}(0,\mathcal{E}_{2}^{*},\cdots ,\mathcal{E}_{m}^{*} )\cdot
 {\mathbold{B}}_{a}(\widehat{\mathcal{E}_{1}^{*}},0,\cdots ,0)\,,
 \]
 we see that
 the elliptic and modular  completion
$\widehat{\mathbold{B}}_{m}$ is obtained from
$\mathbold{B}_{m}$
by replacing the contribution $\mathcal{E}_{1}^{*}$
of the 1-blocks in the partitions of the set $[m]$ by $\widehat{\mathcal{E}}_{1}^{*}$.
On the level of their generating series, from \eqref{eqnwLaurentcoefficientofS2} we see that the elliptic and modular  completion corresponds to throughout replacing $\theta(z)$ by $\widehat{\theta}(z)$ and thus $S(z,c)$
by
\begin{equation}\label{eqnShat}
\widehat{S}(z,c)=e^{\mathbold{A}c}S(z,c)\,.
\end{equation}

\subsubsection{Completion based on sum-over-partitions formula}

We have the following relation analogous to \eqref{eqnBnnquasiellipticity}  in Corollary
\ref{corTngwdifferenceequation}.

\begin{cor}\label{corTngwhat}
For $n\geq 1$,
the elliptic and modular  completion $\widehat{T}_{n}$ is given by
\begin{equation}\label{eqnTngwhat}
 \widehat{T}_{n}=
\sum_{j\in [n]}\sum_{\pi\in \Pi_{[n]\setminus \{j\}}}
\prod_{k}{1\over |\pi_{k}|}\sum_{\pi_{k}=\pi_{k}'\sqcup \pi_{k}''}\mathbold{B}_{\pi'_{k}}(u_{\pi_{k}})
\mathbold{A}^{|\pi''_{k}|}(u_{\pi_{k}})\,,
\end{equation}
where the notation $\pi_{k}=\pi_{k}'\sqcup \pi_{k}''$ means the decomposition of $\pi_{k}$
into the union of two disjoint possibly empty subsets $\pi_{k}', \pi_{k}''$.
\end{cor}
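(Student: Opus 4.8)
The plan is to propagate the elliptic and modular completion through the pure-weight formula \eqref{eqnTngwpureweightstructure} for $T_n^{\Omega_{\mathrm{GW}}}$, one block of the partition at a time. The first observation is that the completion is \emph{blockwise}: by the discussion preceding the statement it is realized on generating series by replacing $\theta$ with $\widehat\theta$, equivalently $S(z,c)$ with $\widehat{S}(z,c) = e^{\mathbold{A}c}S(z,c)$ as in \eqref{eqnShat}, and on the building blocks $\mathbold{B}_m$ it amounts to replacing $\mathcal{E}_1^*$ by $\widehat{\mathcal{E}_1^*} = \mathcal{E}_1^* + \mathbold{A}$ while leaving $\mathcal{E}_a^*$, $a\geq 2$, untouched. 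Since each factor $\mathbold{B}_{\pi_k}$ appearing in \eqref{eqnTngwpureweightstructure} is a polynomial in the $\mathcal{E}_a^*(u_{\pi_k})$ in the single combined variable $u_{\pi_k}$, this shows that $\widehat{T}_n$ is obtained from \eqref{eqnTngwpureweightstructure} simply by replacing each $\mathbold{B}_{\pi_k}$ with its completion $\widehat{\mathbold{B}}_{\pi_k}$, the outer sum over $j$ and $\pi$ being untouched.

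It then remains to identify $\widehat{\mathbold{B}}_m$ with the inner bipartition sum in \eqref{eqnTngwhat}. The key step is to expand $\widehat{\mathbold{B}}_m$ using the binomial identity \eqref{eqnbinomialBell}: decomposing the argument sequence as $(\widehat{\mathcal{E}_1^*}, \mathcal{E}_2^*, \dots) = (\mathcal{E}_1^*, \mathcal{E}_2^*, \dots) + (\mathbold{A}, 0, 0, \dots)$ and using $\mathbold{B}_a(\mathbold{A}, 0, \dots, 0) = \mathbold{A}^a$ yields
\[
\widehat{\mathbold{B}}_m = \sum_{a=0}^m \binom{m}{a}\,\mathbold{B}_{m-a}\,\mathbold{A}^a\,.
\]
(The same expansion can be read off directly from $\widehat{S} = e^{\mathbold{A}c}S$ by extracting $[c^{m-1}]$ via \eqref{eqnBnSreg}, giving an independent check.) Finally I would reinterpret the binomial coefficient combinatorially: for a block $\pi_k$ of size $m$, the number of decompositions $\pi_k = \pi_k' \sqcup \pi_k''$ with $|\pi_k''| = a$ is exactly $\binom{m}{a}$, and each contributes $\mathbold{B}_{\pi_k'}\mathbold{A}^{|\pi_k''|} = \mathbold{B}_{m-a}(u_{\pi_k})\mathbold{A}^a(u_{\pi_k})$ since the complete Bell polynomial depends on $\pi_k'$ only through its cardinality. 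Hence $\widehat{\mathbold{B}}_{|\pi_k|}(u_{\pi_k})$ equals $\sum_{\pi_k = \pi_k'\sqcup\pi_k''}\mathbold{B}_{\pi_k'}(u_{\pi_k})\mathbold{A}^{|\pi_k''|}(u_{\pi_k})$, and substituting into the blockwise completion of \eqref{eqnTngwpureweightstructure} produces \eqref{eqnTngwhat}.

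The computation is essentially bookkeeping, so I do not expect a genuine obstacle; the one point that needs to be stated carefully is the blockwise reduction in the first paragraph, namely that completing $T_n$ is the same as completing each factor $\mathbold{B}_{\pi_k}$ separately. This is legitimate precisely because $\mathbold{A}$ enters only through $\mathcal{E}_1^*$ and the $k$-th factor is evaluated at the single argument $u_{\pi_k}$; once this is granted, the binomial expansion and the passage from $\binom{m}{a}$ to ordered set bipartitions are routine.
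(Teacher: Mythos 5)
Your proposal is correct and follows essentially the same route as the paper: both apply the binomial identity \eqref{eqnbinomialBell} blockwise to get $\widehat{\mathbold{B}}_{m}=\sum_{a}\binom{m}{a}\mathbold{B}_{m-a}\mathbold{A}^{a}$, reinterpret the binomial coefficient as a sum over bipartitions $\pi_{k}=\pi_{k}'\sqcup\pi_{k}''$, and substitute into \eqref{eqnTngwpureweightstructure}. Your explicit justification of the blockwise reduction (that completing $T_{n}$ amounts to completing each factor $\mathbold{B}_{\pi_{k}}$ separately, since $\mathbold{A}$ enters only through $\mathcal{E}_{1}^{*}$ evaluated at the single argument $u_{\pi_{k}}$) is a point the paper leaves implicit in the discussion preceding the corollary, and stating it carefully is a welcome addition.
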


\begin{proof}
By the binomial identity \eqref{eqnbinomialBell},
we have
\[
\mathbold{B}_{\pi_{k}}(u_{\pi_{k}})=\sum_{\gamma=(\pi_{k}',\pi_{k}'')\in \Gamma_{\pi}}
\mathbold{B}_{\pi'_{k}}(u_{\pi_{k}})
\mathbold{A}^{|\pi''_{k}|}(u_{\pi_{k}})\,,
\]
where $\gamma$ is a composition of $\pi$ with two possibly empty blocks.
Plugging this into \eqref{eqnTngwpureweightstructure} in Theorem \ref{thmTngwintermsofB},
we obtain the desired claim.

\end{proof}

\subsubsection{Regularized integrals of $C_{2n}$}

We have shown in Theorem \ref{thmTn=Acycleintegral}
that the enumerative generating series $T_{n}$ coincides with the ordered $A$-cycle integral 
\[
T_{n}^{\Omega_{\mathrm{GW}}}=\Big \langle ~\gamma_{n}^{\mathrm{GW}}\,, ~ \varphi_{n}^{\mathrm{GW}} ~\Big\rangle =\int_{A_{[n]}\subseteq \Omega_{\mathrm{GW}}}\varpi
\,,\quad
\varpi=\theta(u_{[n]})\cdot \Theta_{2n}\,.
\]
Hereafter for notational simplicity, in performing integrations we suppress notations for the trivialization $\bigwedge_{k=1}^{n}dz_{k}$
of the canonical bundle and for the volume form.
 
The notion of regularized integrals
 introduced in \cite{Li:2020regularized, Li:2022regularized}, which has shown to be closely related 
 to ordered $A$-cycle integrals,
can be adapted to the current case.
In fact, let (cf. \cite[Section 3.3.1]{Eskin:2006})
\begin{equation}\label{eqndivisorDelta}
D_{ij}=\Big\{ (w_1,\cdots, w_n)|~w_{i}-w_{j}+\sum_{k\in K} z_{k}-\sum_{\ell\in L} z_{\ell}\in \Lambda_{\tau}~\text{for some}~ K\subseteq [n], L\subseteq [n]\Big\}\,,
\end{equation}
and $D=\bigcup_{i,j}D_{ij}$.
Then
similar to the discussion on hyperplane arrangement divisors in \cite[Lemma 2.35]{Li:2020regularized}, one can find a coherent way of performing reduction of pole order
and define the regularized integral, while  \cite[Proposition 2.27]{Li:2020regularized}
 generalizes directly.
The construction in \cite[Lemma 3.27]{Li:2020regularized} of the ($u$-dependent) integration domain on which no poles lie in the boundary also carries
to the current case.
We also enlarge the rings $\mathcal{J}_{n},\widetilde{\mathcal{J}}_{n},
\widehat{\mathcal{J}}_{n}$
of elliptic, quasi-elliptic, and almost-elliptic functions, by adjoining to $\mathcal{J}_{n}$ rational functions of theta functions
that are meromorphic and have at worst poles along $D$.\\

In what follows, we assume $1<|u_{i}| \ll
|q|^{-1},1\leq i\leq n$ and consider the following region (see \cite[Section 3.3.1]{Eskin:2006}) that is compatible with $\Omega_{\mathrm{GW}}$
\begin{equation}\label{eqndomainoforderedAcycleintegralsimplified}
	\Big\{(u,v)~|~|q|<|\prod_{k\in K}u_{k}^{\pm 1}\cdot  {v_{i}\over v_{j}}|<1\,~\text{for}~ i>j\Big\}\,,
\end{equation}
where $K$ is any subset of $[n]$.
Due to the simple-pole structure for $\varpi$, we have the following result 
that  relates
the ordered $A$-cycle integrals with the regularized integrals of $\varpi$.
This is
analogous to
\cite[Lemma 3.26, Lemma 3.37]{Li:2020regularized} and \cite[Lemma A.5]{Li:2022regularized}.

\begin{lem}\label{lemregularizedintegralC2n}
	One has
	\begin{equation}\label{eqnregularizedintegralC2n}
		\dashint_{E_{[n]}} 
		\varpi
		=\int_{A_{[n]}} \varpi+\sum_{ J:
			\,J\neq\emptyset} (\pii)^{|J|}  \sum_{\mathfrak{r}_{J}} \kappa_{J,\mathfrak{r}_{J}} \cdot \left(\int_{A_{[n]-J}}
		R^{J}_{\mathfrak{r}_{J}}\right)^{\prec}\varpi\,,
	\end{equation}
	where
	\begin{itemize}
		\item
		$\prec$ is the reversed ordering by magnitude, as
		in $\int_{A_{[n]}}=\int_{A_{n}}\cdots\int_{A_2}\int_{A_1}$.
		\item
		$J=(j_1,\cdots, j_\ell)$ is an non-recurring sequence: $j_1\prec j_2\prec\cdots \prec j_\ell$ with cardinality $|J|=\ell$.
		\item
		$\mathfrak{r}_{J}=(\mathfrak{r}_{j_1},\cdots, \mathfrak{r}_{j_\ell})$ satisfies
		$j_a\prec \mathfrak{r}_{j_{a}}\,, \mathfrak{r}_{j_{a}}\in [n]$ for
		$a=1,2,\cdots,\ell$.
		\item
		$\kappa_{J,\mathfrak{r}_{J}}$ is a degree-$|J|$ polynomial in the generators $\mathbold{A}(z_{i}),i\in J$.
		\item 
		Denote by $R^{(i)}_{j}$ the summation of residues at
		$w_{i}\in D_{ij}$, with $D_{ij}$ defined in \eqref{eqndivisorDelta}. Then
		$
		R^{J}_{\mathfrak{r}_{J}}=R^{(j_\ell)}_{\mathfrak{r}_{j_\ell}}\circ\cdots \circ R^{(j_1)}_{\mathfrak{r}_{j_1}}$
		and the notation
		$ \left(\int_{A_{[n]-J}}
		R^{J}_{\mathfrak{r}_{J}}\right)^{\prec}$ stands for the operator arranged according the ordering $\prec$.
	\end{itemize}
\end{lem}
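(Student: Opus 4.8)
The plan is to establish \eqref{eqnregularizedintegralC2n} by induction on $n$, integrating out the variables $w_1,\dots,w_n$ one at a time according to the ordering $\prec$ and comparing, at each stage, the regularized integral over a single elliptic-curve factor with the corresponding $A$-cycle integral. The essential structural input is that $\varpi=\theta(u_{[n]})\cdot\Theta_{2n}$ has at worst \emph{simple} poles, located precisely along the divisors $D_{ij}$ of \eqref{eqndivisorDelta}; this is visible from the explicit product expression \eqref{eqnTheta2n} together with the automorphy \eqref{eqnautomorphyfactoroftheta}. Consequently, after enlarging the rings of (quasi/almost-)elliptic functions to include meromorphic sections with at worst poles along $D$, as in the paragraph preceding the statement, the entire computation stays inside a ring that is stable under taking residues along the $D_{ij}$, so the iteration can be closed.

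First I would isolate the single-variable comparison. Fixing the innermost variable in the $\prec$ ordering and integrating it over its elliptic-curve factor, I would apply the one-dimensional regularized-versus-$A$-cycle comparison of \cite[Lemma 3.26, Lemma 3.37]{Li:2020regularized} and \cite[Lemma A.5]{Li:2022regularized} — valid here because \cite[Proposition 2.27]{Li:2020regularized} on pole-order reduction carries over verbatim to the enlarged ring, as noted in the text — to obtain a relation of the shape
\begin{equation*}
\dashint_{E_{i}}\varpi=\int_{A_{i}}\varpi+\pii\sum_{j:\,i\prec j}\mathbold{A}(z_{i})\cdot R^{(i)}_{j}\varpi\,,
\end{equation*}
where $R^{(i)}_{j}$ collects the residues at the components $w_i\in D_{ij}$. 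The constraint $i\prec j$ and the weight linear in $\mathbold{A}(z_i)$ are dictated by the choice of integration domain \eqref{eqndomainoforderedAcycleintegralsimplified}, compatible with $\Omega_{\mathrm{GW}}$, together with \cite[Lemma 3.27]{Li:2020regularized}, which supplies a $u$-dependent domain whose boundary meets no pole. Since $\varpi$ has only simple poles, each residue $R^{(i)}_{j}\varpi$ is again a section of the enlarged ring with simple poles along the remaining $D_{kl}$, so the step may be iterated.

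Next I would run the induction. Expanding recursively, each resulting term is indexed by the subset $J$ of variables removed by a residue rather than by an honest $A$-cycle integral, together with the assignment $j_a\mapsto\mathfrak{r}_{j_a}$ recording the later variable ($j_a\prec\mathfrak{r}_{j_a}$) against whose diagonal $D_{j_a\mathfrak{r}_{j_a}}$ the residue was taken. Because the residue operators for distinct variables act on disjoint coordinates but must be applied in decreasing $\prec$-order to respect the nested structure of the iterated integral, they assemble into the composite $R^{J}_{\mathfrak{r}_{J}}=R^{(j_\ell)}_{\mathfrak{r}_{j_\ell}}\circ\cdots\circ R^{(j_1)}_{\mathfrak{r}_{j_1}}$, while the surviving $A$-cycle integral over the complement $A_{[n]-J}$ is arranged in the reversed ordering, producing $\bigl(\int_{A_{[n]-J}}R^{J}_{\mathfrak{r}_{J}}\bigr)^{\prec}$. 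The coefficient $\kappa_{J,\mathfrak{r}_{J}}$ accumulates from the single-step weights $\mathbold{A}(z_{j_a})$ through this reordering and is thus a degree-$|J|$ polynomial in the generators $\mathbold{A}(z_i)$, $i\in J$, and the factor $(\pii)^{|J|}$ is the product of the single-step factors $\pii$.

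The main obstacle will be the careful bookkeeping of the iterated residues rather than any single analytic estimate. Concretely, one must verify that after each residue the simple-pole structure along the remaining $D_{kl}$ is preserved — so that \cite[Proposition 2.27]{Li:2020regularized} applies at every stage and no higher-order poles are created — and that the residue operations associated to different variables, when forced into the order prescribed by the nested $A$-cycle integration, recombine correctly up to the reordering encoded by $\prec$; this is where the constraint $j_a\prec\mathfrak{r}_{j_a}$ and the compatibility of \eqref{eqndomainoforderedAcycleintegralsimplified} with $\Omega_{\mathrm{GW}}$ are used to exclude spurious boundary contributions. Tracking $\kappa_{J,\mathfrak{r}_{J}}$ through the reordering and confirming that it depends only on the $\mathbold{A}(z_i)$ with $i\in J$ is the delicate combinatorial point that completes the argument.
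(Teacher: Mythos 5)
Your overall skeleton---reduce to a one-variable comparison between $\dashint_{E_i}$ and $\int_{A_i}$, exploit the simple-pole structure of $\varpi$ along $D$, and iterate in the $\prec$-order inside an enlarged ring stable under residues---is the same as the paper's. But your key one-step identity
\begin{equation*}
\dashint_{E_{i}}\varpi=\int_{A_{i}}\varpi+\pii\sum_{j:\,i\prec j}\mathbold{A}(z_{i})\cdot R^{(i)}_{j}\varpi
\end{equation*}
is wrong, and the error sits exactly where the lemma is nontrivial. By \cite[Lemma 3.26]{Li:2020regularized}, the correction attached to the residue at a component $w_i=w_j+c$ of $D_{ij}$ (with $c$ a $\mathbb{Z}$-combination of the $z_k$'s modulo $\Lambda_\tau$) carries the weight ${\overline{w}_j-w_j+\overline{c}-c\over \overline{\tau}-\tau}$: an almost-elliptic factor in the \emph{remaining} variable $w_j$, plus an $\mathbold{A}$-term coming from the shift $c$---not $\mathbold{A}(z_i)$. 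The polynomial dependence of $\kappa_{J,\mathfrak{r}_J}$ on the generators $\mathbold{A}(z_i)$ is not produced step by step by the choice of integration domain, as you assert; it emerges only at the very end, from the explicit form of the divisor $D$ in \eqref{eqndivisorDelta}, after the $w$-dependent weights have been consumed by the later integrations.

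This mis-attribution also breaks your induction: after one step the integrand is no longer meromorphic but almost-elliptic, of the form $\Psi=\sum_k\Psi_k\bigl({\overline{w}_r-w_r\over\overline{\tau}-\tau}\bigr)^k$, so the one-step formula you wrote (stated only for $\varpi$) cannot be reapplied. What the paper actually proves and then iterates is the single-variable formula for a general almost-elliptic $\Psi$,
\begin{equation*}
\dashint_{E_{i}}\Psi=\sum_{k}{1\over k+1}\int_{A}\Psi_{k}
+\pii\sum_{k}{1\over k+1}\sum_{r}\Bigl({\overline{w}_r-w_{r}\over \overline{\tau}-\tau}\Bigr)^{k+1}R^{(i)}_{r}(\Psi_{k})\,,
\end{equation*}
obtained by recentering via $\overline{w}_i-w_i=(\overline{w}_i-\overline{w}_r)+(\overline{w}_r-w_r)+(w_r-w_i)$ and discarding terms using $\mathrm{Res}_{w=0}(\overline{w}f)=0$ together with the simple-pole hypothesis---this is the precise place where simplicity of the poles is used, since for higher-order poles the binomial terms with $(w_r-w_i)^{\ell}$, $\ell\geq 1$, would survive. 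Note also the factors ${1\over k+1}$: they show $\kappa_{J,\mathfrak{r}_J}$ is in general a polynomial with nontrivial rational coefficients, not the bare product of linear factors $\prod_{a}\mathbold{A}(z_{j_a})$ that your bookkeeping would yield. To repair your argument you would need to (a) prove the one-variable step for almost-elliptic $\Psi$ with the correct weights in $w_r$, (b) verify that each residue output remains almost-elliptic with at most simple poles (the paper invokes \cite{Li:2020regularized} for this), and (c) extract the $\mathbold{A}(z_i)$-polynomial structure of $\kappa_{J,\mathfrak{r}_J}$ at the final stage from \eqref{eqndivisorDelta}.
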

\begin{proof}
	The proof is similar to the one for  \cite[Lemma A. 5]{Li:2022regularized}.
	We use the linear coordinate $w$ on $\mathbb{C}$ for which the volume form is
	$\mathrm{vol}={\mathbf{i}\over 2\,\mathrm{im}\,\tau}{dw\wedge d\overline{w}}$.
	For any almost-elliptic function $\Psi=\sum_{k} \Psi_{k} ({\overline{w}_i-{w}_i\over \overline{\tau}-\tau})^k\,,
	$
	from \cite[Lemma 3.26]{Li:2020regularized}  one has
	\begin{eqnarray*}
		&&\dashint_{E_{i}}\Psi\\
		&=&\sum_{k} {1\over k+1}\int_{A} \Psi_{k}
		+\pii \sum_{k}{1\over k+1}\sum_{r} R^{(i)}_{r}  \left(({\overline{w}_i-{w}_i\over \overline{\tau}-\tau})^{k+1} \Psi_{k}\right)\\
		&=& \sum_{k} {1\over k+1}\int_{A} \Psi_{k}
		+\pii \sum_{k}{1\over k+1}\sum_{r} R^{(i)}_{r}  \left(({\overline{w}_i-\overline{w}_r+\overline{w}_r-w_{r}+w_{r}-w_{i}\over \overline{\tau}-\tau})^{k+1} \Psi_{k}\right)\nonumber\\
		&=&
		\sum_{k} {1\over k+1}\int_{A} \Psi_{k}
		+\pii \sum_{k}{1\over k+1}\sum_{\ell:\, 0\leq \ell\leq k+1}{k+1\choose \ell}({1\over \overline{\tau}-\tau})^{\ell}\sum_{r} ({\overline{w}_r-w_{r}\over \overline{\tau}-\tau})^{k+1-\ell}  R^{(i)}_{r}  \left(
		(w_{r}-w_{i})^{\ell}
		\Psi_{k}\right)\nonumber\,.
	\end{eqnarray*}
	Here in the last equality we have used $\mathrm{Res}_{w=0}(\overline{w} f)=0$ for any meromorphic function $f$.
	Assume now $\Psi$ has at most simple poles, then one has
	\begin{eqnarray*}
		\dashint_{E_{i}}\Psi
		&=&
		\sum_{k} {1\over k+1}\int_{A} \Psi_{k}
		+\pii \sum_{k}{1\over k+1}  \sum_{r}
		({\overline{w}_r-w_{r}\over \overline{\tau}-\tau})^{k+1}
		R^{(i)}_{r}(\Psi_{k})\,.
	\end{eqnarray*}

	By the results in \cite{Li:2020regularized},
	iterated regularized integrals of $\varpi$, which can be easily checked to have at most simple poles,  are almost-elliptic
	and
	have at most simple poles.
	One then expands iterated regularized integrals of
	$\varpi$
	in terms of iterated $A$-cycle integrations and iterated residue operations.
	The structure of the divisor $D$ in
	\eqref{eqndivisorDelta} tells that
	$\kappa_{J,\mathfrak{r}_{J}}$ has the desired polynomial structure.
	
\end{proof}

We have seen in Proposition \ref{propGWpairing=bindpairing} that the
$\int_{A_{[n]}\subseteq \Omega_{\mathrm{bound}}}\varpi$ is invariant under permutations on $(z_{1},\cdots, z_{n})$.
This is consistent with its formulation as iterated $A$-cycle integrations: the invariance follows by a change of bound variables  as in \eqref{eqnactiononboundvariables} in the integration and the invariance of $\varpi$ under this change.
By \cite{Li:2020regularized}, for such orderings the average
  of them gives a quasi-elliptic function of pure weight $n-1$, hence each individual ordered $A$-cycle integral
itself is so. See Appendix \ref{secregularizedintegralT3} for a sample of computations.
Furthermore,  its elliptic and modular completion
is given by the regularized integral $\dashint_{E_{[n]}}\varpi$.
The structure for $\kappa_{J,\mathfrak{r}_{J}}$ agrees
with the discussions in Section \ref{secellipticcompletion} that for a bound ordering
both the elliptic and modular anomaly
enter through $\theta'/\theta$
and are cured by adding terms in $\mathbold{A}$ (no higher order terms in $\mathbold{Y}=1/\mathrm{im} \tau$).

\begin{rem}\label{remellipticompletion}
	According to its automorphy behavior, the elliptic and modular completion $ \widehat{T}_{n}$ in
	Corollary \ref{corTngwhat}
descends 	from \eqref{eqnchomologicalpairing} to
	a smooth section of 
	${\mathcal{P}^{\circ}_{[n]}}({\mathsf{J}^{\circ}_{[n]}}\times \Theta)|_{{\mathsf{J}^{\circ}_{[n]}}\times \Theta}$.
	Furthermore, the singularity structure \eqref{eqnTnsimplepole}
	tells that it only has first order pole along
$
	\sum_{j=1}^{n} 0\times \mathsf{J}_{[n]\setminus \{j\}}
	$, here recall that $0$ stands for the divisor given by the origin on the Jacobian $J$.
	Summarizing, one has
	\begin{eqnarray*}
	&&\text{elliptic completion of}~\Big\langle~ \gamma_{n}^{\mathrm{GW}},\varphi_{n}^{\mathrm{GW}}~\Big\rangle\\
	&=&\dashint_{E_{[n]}}\Phi^{\mathrm{GW}}_{n}
	\quad \in~ \Gamma\left(\mathsf{J}_{[n]}, \mathcal{C}^{\infty}\otimes  \left(\mathcal{P}_{[n]}(\mathsf{J}_{[n]}\times \Theta)|_{\mathsf{J}_{[n]}\times \Theta}\right) (\sum_{j=1}^{n} 0\times \mathsf{J}_{[n]\setminus \{j\}})\right)\,.
	\end{eqnarray*}
Similar to the ordered $A$-cycle integral $\langle \gamma_{n}^{\mathrm{GW}},\varphi_{n}^{\mathrm{GW}}\rangle$, 
the regularized integral  $\dashint_{E_{[n]}}\Phi^{\mathrm{GW}}_{n}$ (and the notion of regularized integrals \cite{Li:2020regularized} in general) also admits a purely cohomological formulation. 
The details will appear in an incoming work.
\end{rem}

\subsection*{Speculations on possible connection to stratification}
The sum-over-partitions structure in Theorem \ref{thmTngwintermsofB} gives
\begin{equation}\label{eqnTngwsumoverpermutationsstratification}
	(q)_{\infty}F_{n}= 	\sum_{j\in [n]}\left( {1\over \theta(z_{[n]})}\cdot 
\sum_{\pi\in \Pi_{[n]\setminus \{j\}}} {\mathbold{B}_{\pi}\over |\pi|}\right)\,.	
\end{equation}
This structure seems to beg for more geometric explanations.
Some experiments on the $n=3$ case (see Appendix \ref{secregularizedintegralT3})
indicate that
this structure is different from the natural stratification provided by 
ordered $A$-cycle integrals and regularized integrals as
given in  \cite{Li:2020regularized, Li:2022regularized}.
At this stage we are not able to provide a nice geometric explanation for \eqref{eqnTngwsumoverpermutationsstratification}.

In this final part, we provide some speculations on the geometric interpretation of the sum-over-partitions structure in Theorem \ref{thmTngwintermsofB} in term of the stratification of configurations spaces,
motivated by the discussions in \cite{Totaro:1996, Getzler:1999}.

We first introduce some constructions.
For each nonempty labeling set $S$ and
$\pi\in \Pi_{S}$, define 
\[
\mathrm{Conf}_{\pi}(E)=
\Big\{
(p_{a})_{a\in S}~|~p_{a}=p_{b}~\mathrm{if ~and~ only~ if}~\exists\, \pi_{k}\in \pi\,~\mathrm{such~that}~ a,b\in \pi_{k}
\Big\}\,.
\]
Let also $\delta_{S}$ be the small diagonal in $E_{S}=\prod_{a\in S}E_a$:
\[\delta_{S}
=\Big\{(p_{a})_{a\in S}\in \prod_{a\in S}E_a~|~
p_{a}=p_{b}~\text{for}~a,b\in S
\Big\}\,.
\]
Then the closure of $\mathrm{Conf}_{\pi}(E)$ satisfies
$\overline{\mathrm{Conf}_{\pi}(E)}\cong \prod_{\pi_{k}\in \pi}\delta_{\pi_{k}}$.
Furthermore, one has the stratifications
\begin{equation}\label{eqnconfigurationspacestratification}
E_{S}=\bigsqcup_{\pi\in \Pi_{S}}\mathrm{Conf}_{\pi}(E)\,,\quad 
E_{S}=\bigcup_{\pi\in \Pi_{S}}\overline{\mathrm{Conf}_{\pi}(E)}\,.
\end{equation}
Now for $n\geq 1$ and $j\in [n]$, we define the 
morphism
\begin{eqnarray}\label{eqngaugefixingmap}
	f_{j}:
E_{[n]}&\rightarrow&    E_{[n]\setminus \{j\}} \,,\nonumber\\
	(p_1,\cdots, p_{n})&\mapsto& (p_{1},\cdots,p_{j-1}, \check{p}_{j},p_{j+1},\cdots ,p_{n})\,.
\end{eqnarray}
Consider the following rule that assigns multi-variable quasi-elliptic functions to  the small diagonals 
\begin{equation}\label{eqnhyporule}
	\mathfrak{R}:\quad
\delta_{I}\subseteq E_{I}\mapsto 
\partial_{c}^{(|I|-1)}|_{c=0}S^{\mathrm{reg}}(z_{I},c)={\mathbold{B}_{|I|}\over |I|}(z_{I})\,~\text{for}~ |I|\geq 1\,.
\end{equation}
Extend $\mathfrak{R}$ such that it maps the operations $\bigcup, \prod$ 
on small diagonals
to the operations $+, \cdot$ on the ring of quasi-elliptic functions.
Then from   \eqref{eqngaugefixingmap}, the second stratification in \eqref{eqnconfigurationspacestratification},  \eqref{eqnhyporule}
and
\eqref{eqnTngwsumoverpermutationsstratification}, we see that
\begin{equation}
	\theta(z_{[n]})(q)_{\infty} F_{n}=
	\mathfrak{R}\left(\sum_{j\in [n]} f_{j}( E_{[n]}) \right)\,.
\end{equation}
We hope to further investigate this  structure using the language of mixed Hodge structures along the lines in \cite{Kriz:1994, Totaro:1996,  Getzler:1999, Brown:2011}
in a future work, borrowing the inputs from the GW/Hurwitz correspondence in 
\cite{Okounkov:2006}.

\begin{appendices}
\section{Quasi-ellipticity of ordered $A$-cycle integrals}
\label{appendixellipticfunctions}

In this part we shall first review a few basic properties of the Jacobi theta functions, then give the proof of
Theorem \ref{thmquasiellipticitygba}.

\subsection{Basics on elliptic and quasi-elliptic functions}
\label{appendixbasicsonellipticfunctions}

We recall a few facts for Eisenstein series and Jacobi forms, mainly following \cite{Eichler:1985}.
For a positive integer $k$, define the Eisenstein series
\begin{equation}\label{eqndfnGmappendix}
\mathbb{G}_{k}={1\over 2}\cdot (k-1)!\cdot \sum_{\lambda\in \Lambda_{\tau}-\{(0,0)\}}^{~\quad e}{1\over \lambda^{k}}\,.
\end{equation}
When $k$ is odd, it is defined to be zero by convention.
When $k=2$, one uses the Eisenstein summation prescription $\sum^{e}$ (summing over the $\pii$ direction first then $\pii \tau$ direction in $\Lambda_{\tau}$) to deal with the non-absolute convergence issue.

The Jacobi theta function $\theta$ satisfies
\begin{equation}\label{eqnthetalogexpansion}
\theta(z)
=z\exp(\sum_{k\geq 1}-2\mathbb{G}_{2k}{z^{2k}\over (2k)!})
\,.
\end{equation}
In particular, one has the Laurent expansion
\begin{equation}\label{eqnLaurentexpansionthetaprime}
{\theta'(z)\over\theta(z)}={1\over z}+\sum_{k\geq 1}-2\mathbb{G}_{2k}{z^{2k-1}\over (2k-1)!}
={1\over z}+\sum_{\lambda\in\Lambda_{\tau}-\{(0,0)\}}^{\quad ~e}({1\over z+\lambda}-{1\over \lambda})
\,.
\end{equation}
On the other hand, using the Jacobi triple identity for $\theta$, one has the Fourier expansion ($u=e^{z}$)
\begin{eqnarray}\label{eqnthetaFourier}
{\theta'(z)\over\theta(z)}&=&-{1\over 2}-\sum_{k\geq 0}\sum_{n\geq 1} {q^{nk} u^{n}}
+\sum_{k\geq 1}\sum_{n\geq 1} {q^{nk} u^{-n}}\nonumber\\
&=&-{1\over 2}-\sum_{n\geq 1} {1\over 1-q^{n}} {u^{n}}+
\sum_{n\geq 1} {q^{n}\over 1- q^{n}} {u^{-n}}\,\nonumber\\
&=&-{1\over 2}-\sum_{n\neq 0 } {1\over 1-q^{n}} {u^{n}}\,,
\quad
|q|<|u|<1\,.
\end{eqnarray}
It follows that for $m\geq 2$
\[
(\ln\theta)^{(m)}
=-
\left(\sum_{k\geq 0}\sum_{n\geq 1}n^{m-1} {q^{nk} u^{n}}
+\sum_{k\geq 1}\sum_{n\geq 1}(-1)^{m}n^{m-1} {q^{nk} u^{-n}}
\right)
=-\sum_{n\neq 0} {n^{m-1 }u^{n}\over (1-q^{n})}\,,\quad
|q|<|u|<1\,.
\]
By the set partition version of the Fa\`a di Bruno formula, one has
\begin{equation}\label{eqnthetanovertheta}
{\theta^{(n)}\over \theta}=
\mathbold{B}_{n}((\ln \theta)',(\ln \theta)'',\cdots, (\ln \theta)^{(n)})
=\sum_{\ell=0}^{n} \mathbold{B}_{n,\ell}((\ln \theta)',(\ln \theta)'',\cdots, (\ln \theta)^{(n-\ell+1)})\,,
\end{equation}
where $\mathbold{B}_{n,\ell}$ are the Bell polynomials and $\mathbold{B}_{n}=\sum_{\ell=0}^{n}\mathbold{B}_{n,\ell}$
is the complete Bell polynomial.
Each of them is a quasi-elliptic function of pure weight $n$.
These quantities can be expressed in terms of the Eisenstein-Kronecker series \cite{Weil:1976} defined by
\begin{equation}\label{eqndfnKroneckerEisenstein}
\mathcal{E}_{m}(z)=\sum^{~\quad e}_{\lambda\in\Lambda_{\tau}} {1\over  ( z+\lambda)^{m}}\,.
\end{equation}
It follows that
\begin{equation}\label{eqnEm}
\partial_{z}\mathcal{E}_{m}=-m\mathcal{E}_{m+1}\,,\quad 
\mathcal{E}_{m}={(-1)^{m-1}\over (m-1)!}(\ln\theta)^{(m)}\,,\quad
m\geq 1\,.
\end{equation}

We now collect a few useful properties about the Szeg\"o kernel from \cite{Zagier:1991}
(see also \cite{Brown:2011, Goujard:2016counting}).
\begin{thm}[\cite{Zagier:1991}]\label{thmSzegokernel}
Let
\begin{equation}\label{eqndfnSezgoappendix}
S(z,w)={\theta'(0)\theta(z+w)\over \theta(z)\theta(w)}\,.
\end{equation}
Then it satisfies the following properties.
\begin{enumerate}[i).]
\item
It is the analytic continuation of the theta series over the indefinite lattice $A_{1,1}$:
\[
S(z,w)={u\over u-1}+{1\over v-1}-\sum_{m,n\geq 1} (u^{m}v^{n}-u^{-m}v^{-n})q^{mn}
\,,\quad
|uq|<1\,, |vq^{-1}|>1\,,
\]
where $u=e^{z}, v=e^{w}$.
Alternatively,
\[
-S(z,w)=\sum_{n\in \mathbb{Z}}{u^n\over 1-v q^{n}}=\sum_{n\in \mathbb{Z}}{v^n\over 1-u q^{n}}\,,\quad
|q|<|u|,|v|<1\,.
\]

\item
Its Fourier development satisfies
\[
S(z,w)={1\over 2}{e^{z}+1\over e^{z}-1}+
{1\over 2}{e^{w}+1\over e^{w}-1}-\sum_{n=1}^{+\infty}\sum_{d|n} (e^{ (dz+{n\over d}w)}-e^{- (dz+{n\over d}w)})q^{n}\,.
\]
\item
One has
\[
S(z,w)={1\over w}
\exp
\left(\sum_{m\geq 1}{w^{m}\over m!}
\,
((-1)^{m-1}(m-1)! \,
\mathcal{E}_{m}(z)
+2\mathbb{G}_{m})
\right)\,.
\]

\end{enumerate}

\end{thm}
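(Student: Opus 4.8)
The three statements are loosely coupled, so the plan is to prove (iii) first as a warm-up, then prove the closed ``electrostatic'' form of (i), and finally read off the Lerch form of (i) and the Fourier development (ii) by purely formal resummation. For (iii) I would simply take logarithms. Since $\theta'(0)=1$ we have $\log S(z,w)=\bigl(\log\theta(z+w)-\log\theta(z)\bigr)-\log\theta(w)$; the first bracket is the Taylor expansion $\sum_{m\ge1}\frac{w^m}{m!}(\log\theta)^{(m)}(z)$ in $w$, while \eqref{eqnthetalogexpansion} gives $\log\theta(w)=\log w-\sum_{k\ge1}2\mathbb{G}_{2k}\frac{w^{2k}}{(2k)!}$. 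Substituting $(\log\theta)^{(m)}(z)=(-1)^{m-1}(m-1)!\,\mathcal{E}_m(z)$ from \eqref{eqnEm} and merging the even-index Eisenstein terms into a single sum over all $m$ (the odd $\mathbb{G}_m$ vanish by convention) yields $\log S=-\log w+\sum_{m\ge1}\frac{w^m}{m!}\bigl((-1)^{m-1}(m-1)!\,\mathcal{E}_m(z)+2\mathbb{G}_m\bigr)$, and exponentiating is exactly (iii). This step is routine.

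For (i) I would prove the closed form, treating its right-hand side $R(z,w)$ as a genuine meromorphic function of $z$ (equivalently of $u=e^z$) rather than a formal series. As a function of $u$, $R$ is manifestly $\pii$-periodic, and on the annulus $|q|<|u|<|q|^{-1}$ its only singularity is the simple pole of $\tfrac{u}{u-1}$ at $u=1$, of residue $1$; the same holds for $S$, since $\theta(z)\sim z$ near $0$ by \eqref{eqntheta} gives $S\sim 1/(u-1)$ there. The key computation is the functional equation $R(z+\pii\tau,w)=e^{-w}R(z,w)$: substituting $u\mapsto qu$ in the double $q$-series and reindexing the two halves $\sum u^mv^nq^{mn}$ and $\sum u^{-m}v^{-n}q^{mn}$ (by $n\mapsto n\pm1$) produces exactly the rational corrections that recombine with $\tfrac{u}{u-1}+\tfrac1{v-1}$ into $e^{-w}R$. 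Using \eqref{eqnautomorphyfactoroftheta} one checks $S(z+\pii\tau,w)=e^{-w}S(z,w)$ as well. Both sides then extend meromorphically to $\mathbb{C}^*$ by this equation, the residue cancellation at $u=1$ propagates to every $u=q^k$, and so $f:=R-S$ is holomorphic on all of $\mathbb{C}^*$ with $f(qu)=v^{-1}f(u)$. Its globally convergent Laurent series $\sum_k a_k u^k$ forces $a_kq^k=a_kv^{-1}$ for all $k$, hence $a_k=0$ for generic $w$ (where $v^{-1}\ne q^k$); thus $f\equiv0$ and $R=S$.

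The Lerch form of (i) then follows by expanding $R$ geometrically in the annulus $|q|<|u|,|v|<1$: splitting $\sum_{n\in\mathbb{Z}}\frac{u^n}{1-vq^n}$ into $n\ge0$ and $n<0$ and summing the inner geometric series recovers $\frac{u}{1-u}+\frac1{1-v}+\sum_{m,n\ge1}(u^mv^n-u^{-m}v^{-n})q^{mn}=-R=-S$, the second equality $\sum_n\frac{v^n}{1-uq^n}$ coming from the symmetry $S(z,w)=S(w,z)$. Part (ii) is immediate from (i): the rational parts agree via $\frac{u}{u-1}+\frac1{v-1}=\frac12\frac{u+1}{u-1}+\frac12\frac{v+1}{v-1}$, and the double $q$-series is reorganized by collecting terms with $mn=N$, i.e. $\sum_{m,n\ge1}u^mv^nq^{mn}=\sum_{N\ge1}q^N\sum_{d\mid N}u^dv^{N/d}$, which is precisely the divisor sum appearing in (ii). The main obstacle is the middle paragraph: carrying out the reindexing bookkeeping that yields the quasi-periodicity of the double $q$-series, and recognizing that it is the passage to the \emph{pole-free} difference $R-S$ that legitimizes the uniqueness argument --- each of $R$ and $S$ only admits annulus-convergent expansions, but their difference has a globally convergent Laurent series, which is exactly what makes the coefficientwise vanishing valid.
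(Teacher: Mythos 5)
Your proposal is correct, but it does more than the paper does, and it is worth separating the two halves. For part (iii), the paper's own derivation (given right after the theorem) is essentially identical to yours: it Taylor-expands $\mathcal{E}_{1}(z+w)$ in $w$ using $\partial_{z}\mathcal{E}_{m}=-m\mathcal{E}_{m+1}$ from \eqref{eqnEm}, integrates to get $\theta(z+w)/\theta(z)$ as an exponential, and combines with \eqref{eqnthetalogexpansion}; your logarithmic-derivative computation is the same argument phrased at the level of $\log S$. For parts (i) and (ii), however, the paper offers no proof at all --- it simply quotes \cite{Zagier:1991} --- so here your route is genuinely additional. I checked its key steps and they hold: the reindexing $n\mapsto n\pm 1$ does give $R(qu,v)=v^{-1}R(u,v)$ (the boundary terms $\tfrac{qu}{qu-1}+\tfrac{uq}{1-uq}=0$ and $v^{-1}\tfrac{1}{u-1}$ recombine exactly as needed), the automorphy \eqref{eqnautomorphyfactoroftheta} gives the same quasi-periodicity for $S$, both functions have a simple pole of residue $1$ at $u=1$ in the $u$-coordinate, and the coefficientwise argument on the globally convergent Laurent series of $f=R-S$ is the correct way to exploit the functional equation $f(qu)=v^{-1}f(u)$; one small point you should make explicit is that after concluding $f\equiv 0$ for generic $w$ (those with $v^{-1}\notin q^{\mathbb{Z}}$), analyticity in $w$ extends the identity to all $w$. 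The geometric resummation yielding the Lerch form, the symmetry $S(z,w)=S(w,z)$, the identity $\tfrac{u}{u-1}+\tfrac{1}{v-1}=\tfrac12\tfrac{u+1}{u-1}+\tfrac12\tfrac{v+1}{v-1}$, and the divisor-sum regrouping $mn=N$ for (ii) are all correct. In short: your approach buys a self-contained appendix at the cost of the quasi-periodicity bookkeeping, while the paper buys brevity by outsourcing (i)--(ii) to Zagier and keeping only the formal exponential identity (iii), which is the part it actually uses.
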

Theorem \ref{thmSzegokernel} (iii) is particularly useful in obtaining nice Laurent series expansions. It can be derived as follows.
One has
\[
\mathcal{E}_{1}(z+w)
=
\sum_{m\geq 0}
\partial_{z}^{m}\mathcal{E}_{1}(z) {w^{m}\over m!}
=\sum_{m\geq 0}(-1)^{m}m! \,
\mathcal{E}_{m+1}(z) {w^{m}\over m!}\,.
\]
It follows fom \eqref{eqnEm} that
\[
{\theta(z+w)\over \theta(z)}
=\exp
\left(\sum_{m\geq 0}
(-1)^{m}m! \,
\mathcal{E}_{m+1}(z){w^{m+1}\over (m+1)!}
\right)\,.
\]
Combining with \eqref{eqnthetalogexpansion},
one thus obtains Theorem \ref{thmSzegokernel} (iii).
This identity should be compared with the $m$th Laurent coefficients of
 $S(z,w)$ in $w$ given in Theorem \ref{thmSzegokernel} (ii)
 \begin{equation}\label{eqnwLaurentcoefficientofS}
 m!\cdot [w^{m}] S(z,w)
 =-\delta_{m,0}\sum_{k\geq 1}u^{k}+{B_{m+1}\over m+1} -
  \sum_{k\geq 1}\sum_{n\geq 1}n^{m}q^{nk}(u^{k}+(-1)^{m+1}u^{-k})\,,
  \quad m\geq 0\,.
 \end{equation}

 The above Laurent coefficients are quasi-elliptic functions in $z$.
More concretely, let
\begin{equation}\label{eqnEm*}
\mathcal{E}_{m}^{*}(z)=
(-1)^{m-1}(m-1)! \,
\mathcal{E}_{m}(z)
+2\mathbb{G}_{m}=(\ln \theta)^{(m)}+2\mathbb{G}_{m}\,,\quad m\geq 1\,.
\end{equation}
Then from \eqref{eqndfnGmappendix} and \eqref{eqndfnKroneckerEisenstein} one has
\begin{equation}
\mathcal{E}_{m}^{*}:=
(-1)^{m-1}(m-1)!
\left({1\over z^m}+\sum_{\lambda\in \Lambda_{\tau}-\{(0,0)\}}( {1\over (z+\lambda)^{m}}+{(-1)^{m-1}\over (m-1)!}{1\over \lambda^{m}})
\right)
\,,\quad m\geq 1\,.
\end{equation}
Furthermore,  by  Theorem \ref{thmSzegokernel} (iii), the  binomial type relation
satisfied by the complete Bell polynomial  \eqref{eqnbinomialBell}, and \eqref{eqnthetanovertheta},
 one has
 \begin{eqnarray}\label{eqnwLaurentcoefficientofS2}
 m!\cdot [w^{m}] S(z,w)
 &=&{1\over m+1}\mathbold{B}_{m+1}(\mathcal{E}_{1}^{*},\mathcal{E}_{2}^{*},\cdots,
\mathcal{E}_{m+1}^{*})\nonumber\\
&=&{1\over m+1}\sum_{a+2b=m+1}{m+1\choose a}
\mathbold{B}_{a}((\ln\theta)',\cdots ,(\ln\theta)^{(a)})\cdot 2\mathbb{G}_{2b}\nonumber\\
&=&{1\over m+1}\sum_{a+2b=m+1}{m+1\choose a}
{\theta^{(a)}\over \theta}\cdot 2\mathbb{G}_{2b}
\,,
\end{eqnarray}
where the convention $2\mathbb{G}_0=1$ is used.

 \begin{ex}
 Consider the $m=0,1$ terms, one obtains
\begin{equation}\label{eqnFourierofZ}
\mathcal{E}_{1}^{*}={\theta'\over \theta}=
 -({1\over 2}+\sum_{k\neq 0} {u^{k}\over 1-q^{k}})\,,\quad
 {1\over 2}\mathcal{E}_{2}^{*}+ {1\over 2}(\mathcal{E}_{1}^{*})^{2}=
{\theta''\over 2\theta}+\mathbb{G}_{2} ={1\over 12}-\sum_{k\neq 0} {q^{k}u^{k}\over (1-q^{k})^2}\,.
 \end{equation}
 \end{ex}

\subsection{Proof of Theorem \ref{thmquasiellipticitygba}}\label{appendixEulerianquasielliptic}

Based on the results reviewed in Appendix \ref{appendixbasicsonellipticfunctions}, we now 
establish a few  results 
that lead to the proof of Theorem \ref{thmquasiellipticitygba}
which seem to be of independent interest.

\begin{prop}\label{propAm}
Define
\begin{equation}\label{eqnAm}
\mathcal{A}_{m+1}(u)=\sum_{k\neq 0}  {q^{k}A_{m}(q^{k})\over (1-q^{k})^{m+1}}u^{k}\,,
\end{equation}
where $A_{m}$ are the Eulerian polynomials: $A_{0}=A_{1}=1,A_{2}=x+1,\cdots$.
Then for $m\geq 1$, $\mathcal{A}_{m+1}(u)$ is absolutely convergent in the region $ |q|<|u|<1$.
Furthermore,
one has
\begin{equation}\label{eqnAmquasi-elliptic}
\mathcal{A}_{m+1}(u)= {B_{m+1}\over m+1} - m!\cdot [w^{m}] S(z,w) \in \mathbb{C}\Big[\{{\theta^{(k)}\over \theta}\}_{0\leq k\leq m+1}\Big]
\,.
\end{equation}
\end{prop}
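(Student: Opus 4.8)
The plan is to expand each Fourier coefficient of $\mathcal{A}_{m+1}$ into a convergent double series and then match it term-by-term against the Fourier development of the Szegő kernel in Theorem \ref{thmSzegokernel}~(ii). The two classical inputs I would invoke are the Eulerian summation identity
\[
\sum_{e\geq 1} e^m x^e = \frac{x\,A_m(x)}{(1-x)^{m+1}}\,,\qquad |x|<1\,,\ m\geq 1\,,
\]
and the reflection symmetry $A_m(x)=x^{m-1}A_m(1/x)$, valid for $m\geq 1$ (both fail at $m=0$, which is why the statement is restricted to $m\geq1$). The whole argument is then a matter of organizing these expansions by the sign of $k$ and reindexing.

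First I would split the defining sum into its $k>0$ and $k<0$ parts. For $k>0$ one has $|q^k|<1$, so the coefficient $q^kA_m(q^k)/(1-q^k)^{m+1}$ is directly $\sum_{e\geq 1}e^m q^{ke}$, and the positive-frequency part of $\mathcal{A}_{m+1}$ becomes $\sum_{d,e\geq 1}e^m q^{de}u^d$. For $k<0$ one has $|q^k|>1$, so the naive geometric expansion diverges; here I would use the reflection symmetry together with $(1-x)^{m+1}=(-1)^{m+1}x^{m+1}(1-x^{-1})^{m+1}$ to rewrite the coefficient as $(-1)^{m+1}\sum_{e\geq 1}e^m q^{|k|e}$, now a convergent expansion since $|q^{|k|}|<1$. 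The negative-frequency part then becomes $(-1)^{m+1}\sum_{d,e\geq 1}e^m q^{de}u^{-d}$.

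These expansions immediately settle convergence: the $k$-th coefficient is asymptotic to a nonzero constant times $q^{|k|}$ as $|k|\to\infty$, so the $k>0$ part converges for $|qu|<1$ and the $k<0$ part for $|q/u|<1$, the common domain being exactly the annulus $|q|<|u|<1$. For the identity I would extract $m!\,[w^m]$ from the three summands of $S(z,w)$ in Theorem \ref{thmSzegokernel}~(ii): the $w$-independent first term drops out for $m\geq 1$; the term $\tfrac12\frac{e^w+1}{e^w-1}$ contributes $B_{m+1}/(m+1)$ via the Bernoulli generating series (the $w^m$-coefficient selecting $n=m+1$); and the double sum $-\sum_{n\geq1}\sum_{d\mid n}(\cdots)q^n$, after the reindexing $(d,n/d)\mapsto(d,e)$, contributes precisely $-\sum_{d,e\geq1}e^m q^{de}(u^d-(-1)^m u^{-d})$, which is $-\mathcal{A}_{m+1}$ by the previous paragraph. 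Rearranging yields $\mathcal{A}_{m+1}=B_{m+1}/(m+1)-m!\,[w^m]S$.

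Finally, membership in $\mathbb{C}\big[\{\theta^{(k)}/\theta\}_{0\le k\le m+1}\big]$ is immediate from \eqref{eqnwLaurentcoefficientofS2}, which already writes $m!\,[w^m]S$ as $\frac{1}{m+1}\sum_{a+2b=m+1}\binom{m+1}{a}\frac{\theta^{(a)}}{\theta}\,2\mathbb{G}_{2b}$; adding the constant $B_{m+1}/(m+1)$, which is a multiple of $\theta^{(0)}/\theta=1$, keeps the expression in the ring. I expect the main obstacle to be the bookkeeping for the $k<0$ terms: invoking the reflection symmetry is unavoidable there, and the resulting sign $(-1)^{m+1}$ must be tracked carefully so that the negative-frequency part lines up exactly with the $u^{-d}$ contribution of the double sum in $S$. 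A useful sanity check is $m=1$, where $\mathcal{A}_2=\sum_{k\neq0}\frac{q^k}{(1-q^k)^2}u^k$ should reproduce $\tfrac{1}{12}-\big(\tfrac{\theta''}{2\theta}+\mathbb{G}_2\big)$ through \eqref{eqnFourierofZ}.
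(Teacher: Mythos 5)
Your proposal is correct and follows essentially the same route as the paper's proof: both split the sum by the sign of $k$, use the Eulerian reflection symmetry $A_m(1/x)=x^{1-m}A_m(x)$ (which the paper derives from the generating function of the $A_m$) together with the classical identity $\sum_{\ell\geq 1}\ell^m x^\ell = x A_m(x)/(1-x)^{m+1}$ to rewrite $\mathcal{A}_{m+1}$ as the double sum $\sum_{d,e\geq 1}e^m q^{de}\left(u^{d}+(-1)^{m+1}u^{-d}\right)$, and then identify this with $B_{m+1}/(m+1)-m!\,[w^m]S(z,w)$ via the Fourier development of the Szeg\"o kernel in Theorem \ref{thmSzegokernel}~(ii), i.e.\ via \eqref{eqnwLaurentcoefficientofS}, with ring membership read off from \eqref{eqnwLaurentcoefficientofS2}. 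The only quibble is your phrase ``the common domain being exactly the annulus $|q|<|u|<1$'': your own bounds ($|qu|<1$ and $|q/u|<1$) give joint convergence on the larger annulus $|q|<|u|<|q|^{-1}$, as the paper notes, so this is an understatement that is harmless for the claim.
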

 \begin{proof}
By definition, one has
\begin{equation}
\mathcal{A}_{m+1}(u)=
\sum_{k\geq 1}  {q^{k}A_{m}(q^{k})\over (1-q^{k})^{m+1}}u^{k}
+(-1)^{m+1}\sum_{k\geq 1}  {q^{(m+1)k-k}A_{m}(q^{-k})\over (1-q^{k})^{m+1}}u^{-k}\,.
\end{equation}
When $m\geq 1$, one has $\deg A_{m}=m-1$.
A comparison to the geometric series shows its absolute convergence in the region $|q|<|u|<|q^{-1}|$, in particular in the region $|q|<|u|<1$.
Using
the generating series of $A_{m}(x)$ given by \cite{Bona:2012}
\[
\sum_{m\geq 0} {t^{m}\over m!}A_{m}(x)={x-1\over x-e^{(x-1)t}}\,,\]
one sees that
\[
A_{m}({1\over x})=x^{1-m}A_{m}(x)\,,\quad m\geq 1\,.
\]
It follows that
\begin{eqnarray*}
\mathcal{A}_{m}(u)=
\sum_{k\geq 1}  {q^{k}A_{m}(q^{k})\over (1-q^{k})^{m+1}}u^{k}
+(-1)^{m+1}\sum_{k\geq 1}  {q^{k}A_{m}(q^{k})\over (1-q^{k})^{m+1}}u^{-k}
\,,\quad m\geq 1\,.
\end{eqnarray*}
Applying the classical identity
\[
\sum_{\ell\geq 1}\ell^{m}x^{\ell}
  ={xA_{m}(x)\over (1-x)^{m+1}}\,,
  \]
one obtains
\begin{eqnarray*}
\mathcal{A}_{m+1}(u)=
\sum_{k\geq 1}\sum_{\ell\geq 1}  {\ell^{m}q^{k\ell}}u^{k}
+(-1)^{m+1}
\sum_{k\geq 1}\sum_{\ell\geq 1}   {\ell^{m}q^{k\ell}}
 u^{-k}
\,,\quad m\geq 1\,.
\end{eqnarray*}
Using
 the relations \eqref{eqnwLaurentcoefficientofS}, \eqref{eqnwLaurentcoefficientofS2} in Appendix \ref{appendixbasicsonellipticfunctions}, one has
\[
 m!\cdot [w^{m}] S(z,w)
 =-\delta_{m,0}\sum_{k\geq 1}u^{k}+ {B_{m+1}\over m+1} -
  \sum_{k\geq 1}\sum_{\ell\geq 1}\ell^{m}q^{k\ell}(u^{k}+(-1)^{m+1}u^{-k})
  \in \mathbb{C}\Big[\{{\theta^{(k)}\over \theta}\}_{0\leq k\leq m+1}\Big]\,.
\]
Combining these, we arrive at \eqref{eqnAmquasi-elliptic}.

\end{proof}

\begin{ex}\label{exanalyticalcontinutationA0}
When $m=0$, one has
\[
\mathcal{A}_{1}(u)=\sum_{k\neq 0}{q^{k}u^{k}\over 1-q^{k}}=\sum_{k\geq 1}{q^{k}u^{k}\over 1-q^{k}}
-\sum_{k\geq 1}{u^{-k}\over 1-q^{k}}
\]
which converges in the region $1<|u|<|q|^{-1}$.
By
 \eqref{eqnthetaFourier}, one has
\[
\mathcal{A}_{1}(u)=-{1\over 2}-{\theta'\over \theta}(qu)\,,\quad 1<|u|<|q|^{-1}\,.
\]
The right hand side can be analytically continued  from the region $1<|u|<|q|^{-1}$ to the
region $|q|<|u|<|q|^{-1}, u\neq 1$ using its quasi-ellipticity.
Indeed, one has from the automorphy \eqref{eqnautomorphyfactoroftheta} of $\theta$
\begin{equation}\label{eqnthetaderivativeautomorphy}
{\theta'\over \theta}(u)={\theta'\over \theta}(qu)+1
\,,\quad |q|<|u|<1\,,
\end{equation}
and thus the analytic continuation $\widetilde{\mathcal{A}}_{1}(u)$ of $\mathcal{A}_{1}(u)$ satisfies
\begin{equation}\label{eqnA1uspecialcase}
\widetilde{\mathcal{A}}_{1}(u)={1\over 2}-{\theta'\over \theta}(u)=1+\sum_{k\neq 0}{u^{k}\over 1-q^{k}}\,,\quad |q|<|u|<1\,.
\end{equation}
Note that here one can also use the parity instead of quasi-ellipticity
\[
\widetilde{\mathcal{A}}_{1}(u)=-{1\over 2} + {\theta'\over \theta}(u^{-1})\,,\quad |q|<|u|<1
\]
to obtain the same result.
\xxqed
\end{ex}

\begin{proof}[Proof of Theorem \ref{thmquasiellipticitygba}]
	Recall from Definition \ref{eqngbafunction} that one has
	\[g_{b,a}(u)= (-1)^{a}
	({1\over 2})^{b}+\sum_{k\geq 1} {q^{ka}u^{k}\over (1-q^{k})^{b}}
	+\sum_{k\geq 1}(-1)^{b} {q^{k(b-a)}u^{-k}\over (1-q^{k})^{b}}\,.
	\]
Consider the case with $b=1$, by
\eqref{eqnthetaFourier} one has
\[
g_{1,0}(u)=-{\theta'\over \theta}(u)-{1\over 2}\,,\quad |q|<|u|<1\,.
\]
One has $g_{1,1}(u)=\mathcal{A}_{1}(u)$ which has been addressed in 
Example \ref{exanalyticalcontinutationA0}.
Hence we only need to prove the statement for $b\geq 2$.

\begin{enumerate}[i).]
	\item 
	This follows from the comparison 
to the geometric series. In particular, each series $g_{b,a}$ is absolutely convergent
 in the region $|q|<|u|<1$.

\item
Ignoring the constant term in $g_{b,a}$
does not change the statement. 
For convenience we do that and focus on 
 the rest part denoted by $f_{b,a}$.
Recall Frobenius's formula \cite{Bona:2012}
\[
A_{n}(x)=\sum_{\ell=1}^{n} \ell ! \, S(n,\ell) (x-1)^{n-\ell}\,, \quad n\geq 1\,.
\]
We see that
\[
{q^{k}A_{n}(q^{k})\over (1-q^{k})^{n+1}}=
\sum_{\ell=1}^{n}  S(n,\ell) (-1)^{n-\ell}
{q^{k}\over (1-q^{k})^{\ell+1}}\,,\quad n\geq 1\,.
\]
By the Stirling inversion formula, we then have
\[(-1)^{n}
{q^{k}\over (1-q^{k})^{n+1}}
 =\sum_{\ell=1}^{n}s(n,\ell) (-1)^{\ell}{q^{k}A_{\ell}(q^{k})\over (1-q^{k})^{\ell+1}}\,,\quad n\geq 1\,.
\]
Therefore, one has
\begin{equation}\label{eqnfn1}
f_{n,1}(u)=
\sum_{k\neq 0}
{q^{k}\over (1-q^{k})^{n+1}}u^{k}
 =(-1)^{n}\sum_{\ell=1}^{n}s(n,\ell) (-1)^{\ell} \mathcal{A}_{\ell+1}\,,\quad n\geq 1\,.
\end{equation}
By Proposition \ref{propAm}, we see for any $n\geq 2$
\begin{equation}\label{eqnfn1quasielliptic}
f_{n,1}(u)\in \mathbb{C}\Big[\{{\theta^{(k)}\over \theta}(u)\}_{0\leq k\leq n+1}\Big]\,,\quad |q|<|u|<1\,.
\end{equation}
Using quasi-ellipticity, we obtain the analytic continuation $\widetilde{f}_{n,1}$ of $f_{n,1}$
in any annulus $|q^{m+1}|<|u|<|q^{m}|,m\in\mathbb{Z}$.

Now consider $f_{b,a}$ with $0\leq a<b$. If  $u$ satisfies $|q|<|u|<1$, then
  $q^{a-1}u$ satisfies
 \[
|q^{a}|<|q^{a-1}u|<|q^{a-1}|\,.
 \]
Using \eqref{eqnfn1quasielliptic} and \eqref{eqnthetaderivativeautomorphy}, we obtain
\[
f_{b,a}(u)=\widetilde{f}_{b,1}(q^{a-1}u)\in  \mathbb{C}\Big[\{{\theta^{(k)}\over \theta}(q^{a-1}u)\}_{0\leq k\leq b+1}\Big]
= \mathbb{C}\Big[\{{\theta^{(k)}\over \theta}(u)\}_{0\leq k\leq b+1}\Big]\,,\quad |q|<|u|<1\,.
\]
By \eqref{eqnthetanovertheta} and  \eqref{eqnthetaderivativeautomorphy}, we see that
 ${\theta^{(k)}\over \theta}(u)$ is a quasi-elliptic function of pure weight $k$
and  in general ${\theta^{(k)}\over \theta}(q^{a-1}u)$ is quasi-elliptic of mixed weight with leading weight $k$.
Furthermore, it
shares the same leading term with ${\theta^{(k)}\over \theta}(u)$.

Consider the last case with $2\leq a=b$
in which the function $f_{b,a}$ is now absolutely convergent in the region $1<|u|<|q|^{-1}$.
The same reasoning in Example \ref{exanalyticalcontinutationA0} applies.
This finishes the proof.

\end{enumerate}

 \end{proof}

\section{Evaluation of $n$-point functions}
\label{appendixevaluations}

In this part we supply some computations on $n$-point functions both in 2d chiral conformal field 
theories and in GW theory.
Many steps rely heavily on non-trivial identities among quasi-elliptic functions as in Example \ref{exTn3case}.

\subsection{2-point functions for free boson and free fermions}
\label{appendixorderings}

We compute $2$-point correlation functions of operators in
radial/time ordering and in
normal ordering, in terms of quasi-elliptic functions.
We  follow closely the exposition in \cite{Kawamoto:1988, DJM00}.

\subsubsection*{Free chiral boson}
Consider the Heisenberg algebra generated by $a_{n},n\in \mathbb{Z}$, with
\[
[a_{m},a_{n}]:=a_{m}a_{n}-a_{n}a_{m}=m\delta_{m+n,0}\,.
\]
Introduce also an operators $x_0$ such that
$[ a_0,x_0]=\mathbf{1}$.
The chiral boson field is defined as the following generating series 
\[
\phi(u)=x_0+a_0 \ln u+\sum_{n\neq 0}a_{n}{u^{{-n}}\over -n}\,.
\]
The current field is defined as
\[
J(u)=u\partial_{u}\phi=a_0 +\sum_{n\neq 0}{a_{n}}u^{-n}\,.
\]

Introduce the normal product of operators 
\[
:a_{m}a_{n}:=\begin{cases}
	a_{m}a_{n}\,,\quad n\geq m\\
	a_{n}a_{m}\,,\quad n<m\,.
\end{cases}
\]
The Hamiltonian $H$ is then defined as the normal product
\[
H={1\over 2}\sum_{k\neq 0}: a_{-k}a_{k}:
={1\over 2}a_0^2+{1\over 2}\sum_{k\geq 1} a_{-k}a_{k}+{1\over 2}\sum_{k\leq -1} a_{k}a_{-k}\,.
\]
The operator $H-{1\over 24}$ can be regarded as the zeta-regularization of the naively defined product
\[
{1\over 2}\sum_{k\neq 0} a_{-k}a_{k}
={1\over 2}\sum_{k\neq 0} :a_{-k}a_{k}:
+{1\over 2}\sum_{k\leq -1} [ a_{-k}, a_{k}]
={1\over 2}\sum_{k\neq 0} :a_{-k}a_{k}:
+{1\over 2}\sum_{k\leq -1}  -k
={1\over 2}\sum_{k\neq 0} :a_{-k}a_{k}:
+{1\over 2}\zeta(-1)\,.
\]
That is, formally  ${1\over 2}\zeta(-1)$ is the vacuum energy
\[
{1\over 2}\zeta(-1)=^{\mathrm{formally}}{1\over 2}\lim_{{v\over u}\rightarrow 1}\sum_{k\geq 1} k({v\over u})^{k}
=^{\mathrm{formally}}{1\over 2} \lim_{{v\over u}\rightarrow 1}{{v\over u}\over (1-{v\over u})^2}\,.
\]
More geometrically,
the extra term $-{1\over 24}$ in $H-{1\over 24}$ above arises
from the change of the projective connection arising from the map $z\mapsto u=e^{z}$.
The Hamiltonian $H$ satisfies
\begin{equation}\label{eqnHancommutator}
[a_{n},H]=n a_{n}\,.
\end{equation}

The $2$-point correlators
are defined as the following  trace over the usual bosonic Fock space
\[
\langle  a_{\ell}a_{k}\rangle:=\mathrm{Tr}\,(q^{H-{1\over24}}a_{\ell}a_{k})\,.
\]
They can be determined using \eqref{eqnHancommutator} as follows.
One has
\[
\mathrm{Tr}(q^{H-{1\over24}}a_{\ell}a_{k})
=
\mathrm{Tr}(a_{k} q^{H-{1\over24}} a_{\ell})=
\mathrm{Tr}( q^{H-{1\over24}+k} a_{k}a_{\ell})
\,.
\]
On the other hand,
\[
\mathrm{Tr}(q^{H-{1\over24}}a_{\ell}a_{k})-\mathrm{Tr}(q^{H-{1\over24}}a_{k}a_{\ell})
=
\mathrm{Tr}(q^{H-{1\over24}}\cdot \ell \delta_{\ell+k})
\,.
\]
Combining these two, we obtain
\begin{equation}\label{eqn2pointcorrelators}
\mathrm{Tr}(q^{H-{1\over24}}a_{\ell}a_{k})=
\begin{cases}
{\ell \over 1-q^{-k}}\delta_{k+\ell}\mathrm{Tr}(q^{H-{1\over24}})\quad \ell\neq 0\,,\\
0\,,\quad \ell=0\,,
\end{cases}
\end{equation}
where 
the partition function $\mathrm{Tr}(q^{H-{1\over24}})$ can be directly computed to be
\[\langle \mathbold{1}\rangle=
\mathrm{Tr}(q^{H-{1\over24}})=\left(q^{1\over 24}\prod_{n=1}^{\infty}(1-q^{n})\right)^{-1}\,.
\]

\begin{rem}\label{remoriginofanomaly}
The spectrum of $a_0$, called momentum, is $\mathbb{R}$.
The contribution of this spectrum to the correlation function/character  is
\[
\int_{\mathbb{R}}e^{{1\over 2}\pii\tau p^2}e^{{1\over 2}\overline{\pii\tau} p^2}e^{p z}e^{p\bar{z}} dp\,.
\]
This gives the extra factor
$(\mathrm{im}\, \tau)^{-{1\over 2}}$ to $\langle \mathbold{1}\rangle$ and
$e^{-2\pi  {(\mathrm{im}\, z)^2 \over \mathrm{im}\,\tau}}$
to ${\theta}$. This contribution of spectrum is the origin of modular and elliptic anomaly in $n$-point functions.

\end{rem}

It follows from \eqref{eqn2pointcorrelators} that the $2$-point function of the radially/time ordered product\footnote{One has $u=e^{\pii(\sigma-\mathrm{i}t)}$, where $t,\sigma$
are the coordinates of the time and spatial directions of the spacetime. One can also use $\tilde{u}=e^{\pii(\sigma+\mathrm{i}t)}$ as the coordinate, while changing
$a_{n}u^{-n}$ to $a_{n}\tilde{u}^{-n}$ so that the time ordering is given by $|\tilde{u}|<|\tilde{v}|$. } $J(u)J(v), |u|>|v|$ is given by
\begin{equation}\label{eqn2pointfunctiontimeordered}
\langle  J(u)J(v)\rangle=\sum_{k,\ell}
 u^{-\ell}v^{-k} \langle  a_{\ell}a_{k}\rangle
 =\sum_{k\neq 0} {kq^{k} \over 1-q^{k}} ({v\over u})^{-k}= \wp({v\over u})+2\mathbb{G}_2\,.
\end{equation}
As a comparison, the $2$-point function of the normally ordered product is 
\begin{equation}\label{eqn2pointfunctionnormalordered}
\langle  :J(u)J(v):\rangle=\sum_{k,\ell}u^{-\ell}v^{-k}
\langle  :a_{\ell}a_{k}:\rangle=
\sum_{k\geq 0}
{kq^{k}\over 1-q^{k}} ({v\over u})^{-k}
+
\sum_{k< 0}{-kq^{-k}\over 1-q^{-k}} ({v\over u})^{-k}
\,.
\end{equation}
The difference is then the vacuum expectation value
\begin{equation}\label{eqn2pointfunctiontime-normalordered}
\langle  J(u)J(v)\rangle-
\langle  :J(u)J(v):\rangle=-\sum_{k<0} k ({v\over u})^{-k}=\sum_{k\geq 1} k({v\over u})^{k}
={{v\over u}\over (1-{v\over u})^2}\,.
\end{equation}
In the region $|q|<|{v\over u}|<1$,
the singularity of $\wp({v\over u})+2\mathbb{G}_2$ at ${v\over u}=1$ is exhibited by (recall \eqref{eqnthetaFourier})
\begin{eqnarray*}
\wp+2\mathbb{G}_2&=&\sum_{k\geq 1} {k({v\over u})^{k}\over 1-q^{k}}-\sum_{k\geq 1} {k({v\over u})^{-k}\over 1-q^{-k}}\\
&=&\sum_{k\geq 1} k({v\over u})^{k}+\sum_{k\geq 1} {k({v\over u})^{k}\over 1-q^{k}}+\sum_{k\geq 1}
{kq^{k}({v\over u})^{-k}\over 1-q^{k}}\,,
\end{eqnarray*}
where the sum of the last two terms are regular near ${v\over u}=1$.
Therefore, the difference \eqref{eqn2pointfunctiontime-normalordered} is given by the singular part of the correlation function \eqref{eqn2pointfunctiontimeordered}  of the radial ordering, rendering the
normal ordering one \eqref{eqn2pointfunctionnormalordered} regular but not elliptic anymore.
In terms of the additive coordinate $z$, one has
\[
{{v\over u}\over (1-{v\over u})^2}=-1+{1\over 1-e^{w-z}}=-1-\sum_{n\geq	 0} {B_{n}\over n!} (w-z)^{n-1}\,.
\]

\subsubsection*{Free chiral fermions}

One can apply the same analysis to the fermions $\psi(u), \psi^{*}(v)$
whose modes gives the Clifford algebra.
In this case,
the correlation function of the radial ordering product $\psi(u) \psi^{*}(v), |u|>|v|$  gives the Szeg\"o kernel $S_{c}(u,v)$.

The details are as follows.
The modes satisfy
\[
\{\psi_{r},\psi^{*}_{s}\}=\psi_{r}\psi^{*}_{s}+\psi^{*}_{s}\psi_{r}=\delta_{r+s}\,.
\]
The fermion fields are
\[
\psi(u)=\sum_{r} \psi_{r}u^{-r}\,,\quad
\psi^{*}(v)=\sum_{s} \psi^{*}_{s}v^{-s}\,.
\]
The shifted Hamiltonian
\[
H-{1\over 24}=\sum_{r} r :\psi_{-r}\psi^{*}_r:-{1\over 24}
\]
is the zeta-regularized version of
\[
\sum_{r} r \psi_{-r}\psi^{*}_r=\sum_{r} r :\psi_{-r}\psi^{*}_r:+\sum_{r<0} r=\sum_{r} r :\psi_{-r}\psi^{*}_r:+{1\over 2}\zeta(-1)\,,
\]
when $r,s\in \mathbb{Z}+{1\over 2}$.
One has
\begin{equation}\label{eqncommutatorHpsifermion}
[H,\psi_{r}]=-r\psi_{r}\,,\quad  [H,\psi^{*}_{r}]=-r\psi_{r}^{*}\,.
\end{equation}
By definition,
\[
\psi(u)\psi^*(v)=\sum_{r,s}\psi_{r}\psi^{*}_{s}u^{-r}v^{-s}\,,\quad
:\psi(u)\psi^*(v):=\sum_{r,s:\, s>0}\psi_{r}\psi^{*}_{s}u^{-r}v^{-s}
-\sum_{r,s:\, s<0}\psi^{*}_{s}\psi_{r}u^{-r}v^{-s}\,.
\]
It follows that
\begin{eqnarray*}
	:\psi^{*}(v)\psi(u):&=&-:\psi(u)\psi^{*}(v):\\
	\psi(u)\psi^{*}(v)&=&:\psi(u)\psi^{*}(v):+\sum_{s<0} ({v\over u})^{-s}\mathbf{1}\,,\quad |{v\over u}|<1\,,\\
	\psi^{*}(v)\psi(u)&=&:\psi^{*}(v)\psi(u):+\sum_{s>0} ({v\over u})^{-s}\mathbf{1}\,,\quad |{u\over v}|<1\,.
\end{eqnarray*}
The differences are formal distributions\footnote{In \cite[Section 3.3.2]{Okounkov:2001} and \eqref{dfngbafunction}, it is assumed that
	after analytic continuation $\delta_{u,v}$ is set to zero as an operator.
	This is also the case in the Fourier expansion of the functional equation $\theta(u^{-1})=-\theta(u)$.
	However, in computations such as in deriving \eqref{eqncommutationTpsi} of 
	Proposition \ref{proprelationbetweenoperatorsTnTnA}, when multiplied with other
	formal distributions this distribution is not treated as zero.}
\begin{eqnarray*}
\psi(u)\psi^*(v)-:\psi(u)\psi^*(v):&=&\sum_{r,s:\, s<0}\delta_{r+s}u^{-r}v^{-s}
=\sum_{s<0}({v\over u})^{-s}\mathbf{1}\,,\\
	\psi(u)\psi^{*}(v)-	\psi^{*}(v)\psi(u)&=&
\delta_{u,v}:=\sum_{s<0} ({v\over u})^{-s}\mathbf{1}+\sum_{s>0} ({v\over u})^{-s}\mathbf{1}\,.
\end{eqnarray*}

Similar to the reasoning in the  free chiral boson case, using \eqref{eqncommutatorHpsifermion} one has
\begin{equation}\label{eqn2pointcorrelatorsfermion}
\langle \psi_{r}\psi_{s}^{*}\rangle={q^{s}\over 1+q^{s}}\delta_{r+s}\,,\quad
\langle \psi_{s}^{*} \psi_{r}\rangle={q^{r}\over 1+q^{r}}\delta_{r+s}\,,
\end{equation}
up to normalization by the partition function $\langle \mathbold{1}\rangle$.
It follows that
\begin{eqnarray}\label{eqn2pointcorrelationfunctionfermion}
\langle \psi(u)\psi^{*}(v)\rangle
&=& \sum_{r,s} {q^{s}\over 1+q^{s}} \delta_{r+s} u^{-r}v^{-s}
=\sum_{s} {q^{s}\over 1+q^{s}} ({v\over u})^{-s}
\,,\\
\langle :\psi(u)\psi^{*}(v):\rangle
&=&\sum_{s>0}{q^{s}\over 1+q^{s}} \delta_{r+s} u^{-r}v^{-s}
-\sum_{s<0} {q^{-s}\over 1+q^{-s}} \delta_{r+s} u^{-r}v^{-s}\nonumber\\
&=&-\sum_{s>0}{q^{s}\over 1+q^{s}} ({v\over u})^{s}
+\sum_{s>0} {q^{s}\over 1+q^{s}} ({v\over u})^{-s}\,.
\end{eqnarray}
The difference is then the ``singular part" of $\langle \psi(u)\psi^{*}(v)\rangle$, which is also the vacuum expectation value,
\[
\langle \psi(u)\psi^{*}(v)\rangle-
\langle :\psi(u)\psi^{*}(v):\rangle
=\sum_{s>0}  ({v\over u})^{s}\,,
\]
leaving the correlation functions $\langle :\psi(u)\psi^{*}(v):\ \rangle=-\langle :\psi^{*}(v)\psi(u): \rangle$ nonsingular everywhere.
It is  the radially ordered product in which Wick's theorem is conveniently formulated 
\[C_{2n}=
\langle
\psi(u_1)\cdots \psi(u_{n})\psi^{*}(v_{n})\cdots \psi^{*}(v_{1})\rangle=\det( \langle \psi(u_i) \psi^{*}(v_j)\rangle)\,.\]

Let $s=n+\delta, \delta\in (0,1)$.
Then from \eqref{eqn2pointcorrelationfunctionfermion} the Fourier series for $\langle \psi(u)\psi^{*}(v)\rangle$ is convergent in the region $|q|<|{v\over u}|<1$
\[\langle \psi(u)\psi^{*}(v)\rangle=
\sum_{s} {q^{s}\over 1+q^{s}} ({v\over u})^{-s}
=\sum_{s> 0} {1\over 1+q^{s}} ({v\over u})^{s}+\sum_{s>0} {q^{s}\over 1+q^{s}} ({v\over u})^{-s}\,.
\]
By Theorem \ref{thmSzegokernel} (i), this gives the Szeg\"o kernel
\begin{equation}
\langle \psi(u)\psi^{*}(v)\rangle=
({v\over u})^{1-\delta}
\sum_{n} {({v\over u})^{n}\over 1-q^{n}e^{c}}=
- ({v\over u})^{1-\delta} S_{c}(w-z)\,,
\end{equation}
 where
\[
e^{c}=-q^{1-\delta}=e^{\pii ({1\over 2}+(1-\delta) \tau)}\,,\quad c\neq 0\,.
\]
When $\delta={1\over 2}$, this simplifies into
\[
\langle \psi(u)\psi^{*}(v)\rangle
=- ({v\over u})^{1\over 2}
{\theta_{c}({v\over u})\over \theta_{c}(0)}{\theta'(0)\over \theta({v\over u})}
=- ({v\over u})^{1\over 2}
{\vartheta_{({1\over 2}, {1\over 2})}(c+w-z)\over \vartheta_{({1\over 2},
{1\over 2})}(c)}{\theta'(0)\over \theta({v\over u})}
=-{\vartheta_{(0,0)}({v\over u})\over \vartheta_{(0,0)}(0)}
{\theta'(0)\over \theta({v\over u})}
\,.
\]
In particular, using the parity of $\vartheta_{(0,0)}, \vartheta_{({1\over 2},{1\over 2})}$, one has the desired relation (see \cite[Equation 3.9]{Okounkov:2001})
\begin{equation}
	[v^0]
\langle \psi(uv)\psi^{*}(v)\rangle=-{\vartheta_{(0,0)}({1\over u})\over \vartheta_{(0,0)}(0)}
{\theta'(0)\over \theta({1\over u})}
={\vartheta_{(0,0)}(u)\over \vartheta_{(0,0)}(0)}
{\theta'(0)\over \theta(u)}={\vartheta_{(0,0)}(u)\over \theta(u)}\,.
\end{equation}

\subsection{Ordered $A$-cycle integral and regularized integral for the $n=3$ case}
\label{secregularizedintegralT3}

In this part, we evaluate the ordered $A$-cycle integrals and regularized integral of
\[
\theta(\prod_{k}u_{k}) \Theta_{2n}={\theta(\prod_{k}u_{k}) \over \prod_{k} \theta(u_{k})}\Phi\,,\quad
\Phi:= {\prod_{i<j} \theta({u_{i}v_{i}\over u_{j}v_{j}} \theta({v_{i}\over v_{j}})\over   \prod_{i<j}
\theta({u_{i}v_{i}\over v_{j}} \theta({v_{i}\over u_{j}v_{j}}  )}
\]
 for the $n=3$ case, using the techniques developed in \cite{Li:2020regularized, Li:2022regularized}.
Again we use the  additive coordinate $z$ and multiplicative coordinate $u=e^{z}$
interchangably.

Before proceeding, the following residues, which can be checked directly, will be used repeatedly
\begin{eqnarray}\label{eqnresiduesofblock}
		\mathrm{Res}_{v_{i}=u_{i}^{-1}v_{j}}\,
		\left(
		{\theta({u_{i}v_{i}\over u_{j}v_{j}})\theta({v_{i}\over v_{j}})\over
			\theta({u_{i}v_{i}\over v_{j}})
			\theta({v_{i}\over u_{j}v_{j}})}	
\right)
&=&-{1\over S_{ij}}\nonumber\\
\mathrm{Res}_{v_{i}=u_{j}v_{j}}\,
\left(
{\theta({u_{i}v_{i}\over u_{j}v_{j}})\theta({v_{i}\over v_{j}})\over
	\theta({u_{i}v_{i}\over v_{j}})
	\theta({v_{i}\over u_{j}v_{j}})}
\right) &=&{1\over S_{ij}}
\,,\quad \quad 
S_{ij}:=
\left({\theta'(1) \theta(u_i u_j)
	\over \theta(u_i)\theta(u_j)}\right)^{-1}\,.
\end{eqnarray}
Define the quasi-elliptic function
\begin{equation}\label{eqndfnZappendix}
	Z(z)=
{\theta'(z)\over \theta(z)}
=\zeta({z})-2\mathbb{G}_2 z\,.
\end{equation}
Its elliptic and modular completion is $\widehat{Z}(z)=Z(z)+\mathbold{A}$, where $\mathbold{A}(z)={\mathrm{im}\,z\over \mathrm{im}\,\tau}$.

Denote by $R^{i}_{j}$ the summation of residues at
$w_{i}\in D_{ij}$, with $D_{ij}$ defined in \eqref{eqndivisorDelta}.
Following the residue formulas shown in \cite{Li:2022regularized}, we have
\[
\int_{E_{[n]}}\Phi=
R^{2}_{3}R^{1}_{2}  \Phi\left( \widehat{Z}({v_{1}\over v_{3}})\widehat{Z}({v_{2}\over v_{3}})
-{1\over 2}\widehat{Z}({v_{2}\over v_{3}})^2\right)
+R^{2}_{3}R^{1}_{3}  \Phi\left( \widehat{Z}({v_{1}\over v_{3}})\widehat{Z}({v_{2}\over v_{3}})\right)\,.
\]
and
\[
\int_{A_{[n]}}\Phi=
R^{2}_{3}R^{1}_{2} \Phi \left( Z({v_{1}\over v_{3}})Z({v_{2}\over v_{3}})
-{1\over 2}Z({v_{2}\over v_{3}})^2-{1\over 2}Z({v_{2}\over v_{3}})\right)
+R^{2}_{3}R^{1}_{3}  \Phi\left( Z({v_{1}\over v_{3}})Z({v_{2}\over v_{3}})\right)\,.
\]
Straightforward computations using  \eqref{eqnresiduesofblock},  \eqref{eqnLaurentexpansionthetaprime} and \eqref{eqndfnZappendix} give

\begin{eqnarray*}
R^{2}_{3}R^{1}_{2}  \Phi\left( \widehat{Z}({v_{1}\over v_{3}})\widehat{Z}({v_{2}\over v_{3}})\right)
&=&{\prod \theta(u_i)\over \theta(\prod u_{i})}
(-\widehat{Z}({u_3 })\widehat{Z}({u_1 u_3 })+\widehat{Z}({u_2 })\widehat{Z}({u_1 u_2 })\\
&&\quad\quad\quad \quad-\widehat{Z}({u_1 })\widehat{Z}({u_1 u_2 })+\widehat{Z}({u_3 })\widehat{Z}({u_2 u_3 }))\\
&&-{1\over S_{12}}
  {\theta({1\over u_1 })\theta({ u_2\over   u_{3}})\over
\theta({1\over u_{1}u_{3}})\theta(u_2)} \widehat{Z}({1\over u_1})+{1\over S_{12}}
 {\theta({u_{1}\over u_{3}})\theta({1\over u_{2}})\over
\theta(u_{1})
\theta({1\over u_{2}u_{3}})}\widehat{Z}({1\over  u_2 })\,,\\
R^{2}_{3}R^{1}_{2} \Phi\left(-{1\over 2}\widehat{Z}({v_{2}\over v_{3}})^2\right)
&=& {\prod \theta(u_i)\over \theta(\prod u_{i})}
({1\over 2} \widehat{Z}(u_1 u_3)^2-{1\over 2} \widehat{Z}({1\over u_2})^2+{1\over 2} \widehat{Z}({1\over u_1 u_2})^2-{1\over 2} \widehat{Z}( u_3)^2)\\
&&-{1\over S_{12}} (-{1\over 2} )
  {\theta({1\over u_1 })\theta({ u_2\over   u_{3}})\over
\theta({1\over u_{1}u_{3}})\theta(u_2)} (Z({1\over u_1})+Z({ u_2\over   u_{3}})-Z({1\over u_{1}u_{3}})-Z({u_2}))\\
&&+{1\over S_{12}} (-{1\over 2})
 {\theta({u_{1}u_2\over u_{3}}) \over
\theta(u_{1}u_2)
\theta({1\over u_{3}})}\,,\\
R^{2}_{3}R^{1}_{2} \Phi\left(-{1\over 2}\widehat{Z}({v_{2}\over v_{3}})\right)
&=& {\prod \theta(u_i)\over \theta(\prod u_{i})}
({1\over 2}Z(u_1 u_3)-{1\over 2} Z({1\over u_2})+{1\over 2}Z({1\over u_1 u_2})-{1\over 2} Z( u_3))\\
&&-{1\over S_{12}} (-{1\over 2} )
  {\theta({1\over u_1 })\theta({ u_2\over   u_{3}})\over
\theta({1\over u_{1}u_{3}})\theta(u_2)}\,,\\
R^{2}_{3}R^{1}_{3}  \Phi\left( \widehat{Z}({v_{1}\over v_{3}})\widehat{Z}({v_{2}\over v_{3}})\right)
&=&{\prod \theta(u_i)\over \theta(\prod u_{i})}
(\widehat{Z}({u_1 })\widehat{Z}({u_1 u_2 })+\widehat{Z}({u_1 })\widehat{Z}(u_3)
+\widehat{Z}({u_3 })\widehat{Z}({u_1 u_3 })+\widehat{Z}({u_2 })\widehat{Z}({ u_3 }))\\
&&-{1\over S_{13}}
  {\theta({ u_1 })\theta({ u_2\over   u_{3}})\over
\theta({u_{1}u_{2}})\theta({1\over u_3})} \widehat{Z}({1\over u_1})\,.
\end{eqnarray*}
The corresponding terms appearing in the ordered $A$-cycle integrals are obtained by the same expressions above, with $\widehat{Z}$ replaced by $Z$.
Note that  in each integration, there is an ambiguity in choosing the integration constants that are elliptic functions in the remaining variables.
These integration constants can change the individual terms above, but result in no difference for the overall sum by the global residue theorem.

Computing the $c^{1}$ of the Fay's identity  \eqref{eqnFaymulti-secantsimplified} for $n=2$ gives
\begin{eqnarray*}
Z({1\over u_1})+Z({ u_2\over   u_{3}})-Z({1\over u_{1}u_{3}})-Z({u_2})
&=& \theta({u_{2}\over u_1 u_3}) {1\over \theta({1\over u_1})\theta({u_{2}\over u_{3}})}
{\theta(u_3) \theta(u_1 u_2)\over \theta({ u_2})\theta({u_1 u_{3}})}\,,\\
{1\over 2}Z(u_1 u_3)-{1\over 2} Z({1\over u_2})+{1\over 2}Z({1\over u_1 u_2})-{1\over 2} Z( u_3)
&=&{1\over 2}{\theta(u_1 u_2 u_3) \theta({1\over u_1}) \theta({u_3\over u_2}) \over \theta(u_2) \theta(u_{3})  \theta(u_1 u_3) \theta({1\over u_1 u_2})}\,.
\end{eqnarray*}
Computing the $Q_{i}\rightarrow 0$ limit of the Fay's identity  \eqref{eqnFaymulti-secantsimplified} for $n=2$ gives
\[
\wp(z_1)-\wp(z_2)= -{\theta(z_1+z_2)\theta(z_1-z_2)\over \theta(z_1)^2\theta(z_2)^2}
\]
and thus
\begin{eqnarray*}
- {1\over 2}
{\theta(u_1)\theta(u_3) \theta({u_{2}\over u_1 u_3}) \over \theta({ u_2})\theta({u_1 u_{3}})^2}+ {1\over 2}
{\theta(u_1)\theta(u_2) \theta({u_{1}u_2\over  u_3}) \over \theta({ u_3})\theta({u_1 u_{2}})^2}
= {\prod \theta(u_i)\over \theta(\prod u_{i})}
{1\over 2}\left(
\wp(u_2)-\wp(u_1 u_3)+\wp(u_3)-\wp(u_1 u_2)
\right)\,.
\end{eqnarray*}

Simplifying using these identities, we have
$R^{2}_{3}R^{1}_{2} (\Phi (-{1\over 2}Z({v_{2}\over v_{3}}))=0$ as it should be the case since $\int_{A_{[n]}}\Phi$ is of pure weight $0$ while the term above
is the only term with
negative weight.
The other terms are
\begin{eqnarray}
&&R^{2}_{3}R^{1}_{2} (\Phi Z_1 Z_2)\nonumber\\
&=& {\prod \theta(u_i)\over \theta(\prod u_{i})}
(-Z({u_3 })Z({u_1 u_3 })+Z({u_2 })Z({u_1 u_2 })-Z({u_1 })Z({u_1 u_2 })+Z({u_3 })Z({u_2 u_3 }))\nonumber\\
&&+ {\prod \theta(u_i)\over \theta(\prod u_{i})} Z_{1}
\left(
Z_{21}-Z_{2}-Z_{13}+Z_{3}\right)
-{\prod \theta(u_i)\over \theta(\prod u_{i})} Z_{2}
\left(
Z_{12}-Z_{1}-Z_{23}+Z_{3}\right)\nonumber\\
&=& {\prod \theta(u_i)\over \theta(\prod u_{i})}
\left( -Z_{3}Z_{13}+Z_{3}Z_{23}-Z_{1}Z_{13}+Z_{1}Z_{3}+Z_{2}Z_{23}-Z_2 Z_3
\right)\,,\\
&&R^{2}_{3}R^{1}_{2} (\Phi (-{1\over 2}Z^2_2))\nonumber\\
&=&  {\prod \theta(u_i)\over \theta(\prod u_{i})}
({1\over 2} Z(u_1 u_3)^2-{1\over 2} Z({1\over u_2})^2+{1\over 2} Z({1\over u_1 u_2})^2-{1\over 2} Z( u_3)^2)\nonumber\\
&&+{\prod \theta(u_i)\over \theta(\prod u_{i})}
{1\over 2}\left(
\wp(u_2)-\wp(u_1 u_3)+\wp(u_3)-\wp(u_1 u_2))\right)\nonumber\\
&=&^{\eqref{eqnhomogeneousadditionformula}}  {\prod \theta(u_i)\over \theta(\prod u_{i})}
({1\over 2} \mathbold{B}_2(u_1 u_3)+{1\over 2}\mathbold{B}_2({u_1 u_2})
+{1\over 2} \mathbold{B}_2({u_2 u_3})
+Z_2 Z_{3}-Z_{2} Z_{23}-Z_{3}Z_{23})\,,\\
&&R^{2}_{3}R^{1}_{3} (\Phi Z_1 Z_2)\nonumber\\
&=& {\prod \theta(u_i)\over \theta(\prod u_{i})}
(Z({u_1 })Z({u_1 u_2 })+Z({u_1 })Z(u_3)
+Z({u_3 })Z({u_1 u_3 })+Z({u_2 })Z({ u_3 }))\nonumber\\
&&-
   {\prod \theta(u_i)\over \theta(\prod u_{i})} Z_{1}
\left(
Z_{21}-Z_{2}-Z_{13}+Z_{3}\right)\nonumber\\
&=& {\prod \theta(u_i)\over \theta(\prod u_{i})}
\left(
Z({u_3 })Z({u_1 u_3 })+Z({u_2 })Z({ u_3 })+Z({u_1 })Z({ u_2 })+Z({u_1})Z({u_1 u_3 })
\right)\,.
\end{eqnarray}
Here we have used the short-hand notation that $Z_{k}=Z(u_{k}), Z_{k\ell}=Z(u_{k}u_{\ell})$.
Putting all these terms together, we have
\[
\int_{A_{[n]}}\Phi=
{\prod \theta(u_i)\over \theta(\prod u_{i})}
({1\over 2} \mathbold{B}_2(u_1 u_3)+{1\over 2} \mathbold{B}_2({u_1 u_2})
+{1\over 2}\mathbold{B}_2({u_2 u_3})
+Z_1 Z_{3} +Z_2 Z_{3}+Z_3 Z_{1})\,.
\]
This
matches the result \eqref{eqnT3explicitcomputation}
and
Theorem \ref{thmTngwintermsofB}, according to
\eqref{eqnwLaurentcoefficientofS2} and \eqref{eqnFourierofZ}.
Similarly, the regularized integral assembles a similar form as above, and is the elliptic completion of the above ordered $A$-cycle integral.

\end{appendices}

\bibliographystyle{amsalpha}

\providecommand{\bysame}{\leavevmode\hbox to3em{\hrulefill}\thinspace}
\providecommand{\MR}{\relax\ifhmode\unskip\space\fi MR }
\providecommand{\MRhref}[2]{%
  \href{http://www.ams.org/mathscinet-getitem?mr=#1}{#2}
}
\providecommand{\href}[2]{#2}

\bigskip{}

\noindent{\small Yau Mathematical Sciences Center, Tsinghua University, Beijing 100084, P. R. China}


\noindent{\small Email: \tt jzhou2018@mail.tsinghua.edu.cn}

\end{document}